\newtheorem{theorem}{Theorem}
\numberwithin{theorem}{section}
\newtheorem{lemma}[theorem]{Lemma}
\newtheorem{corollary}[theorem]{Corollary}
\newtheorem{proposition}[theorem]{Proposition}
\newtheorem{definition}[theorem]{Definition}
\newtheorem{remark}[theorem]{Remark}
\DeclareMathOperator{\diam}{diam}
\DeclareMathOperator{\spn}{span}
\DeclareMathOperator{\epi}{epi}
\DeclareMathOperator{\dom}{dom}
\newcommand{\abs}[1]{|#1|}
\newcommand{\dprb}[1]{\big\langle #1 \big\rangle}
\newcommand{\ddpr}[1]{\langle\!\langle #1 \rangle\!\rangle}
\newcommand{\cl}[1]{\overline{#1}}
\newcommand{\di}{\mathrm{d}}
\newcommand{\dd}{\, \mathrm{d}}
\newcommand{\N}{\mathbb{N}}
\newcommand{\R}{\mathbb{R}}
\newcommand{\B}{\mathbb{B}}
\newcommand{\Hi}{\mathbb{H}}
\newcommand{\SH}{\mathbb{S}_{\mathbb{H}}}
\newcommand{\ONE}{\mathbf{1}}
\newcommand{\tows}{\overset{*}\rightharpoondown}
\newcommand{\toY}{\overset{\Yb}{\to}}
\newcommand{\LL}{\mathrm{L}}
\newcommand{\WW}{\mathrm{W}}
\newcommand{\CC}{\mathrm{C}}
\newcommand{\BV}{\mathrm{BV}}
\newcommand{\Yb}{\mathbf{Y}}
\newcommand{\Eb}{\mathbf{E}}
\newcommand{\Qb}{\mathbf{Q}}
\newcommand{\SQb}{\mathbf{SQ}}
\newcommand{\eps}{\varepsilon}
\newcommand{\term}[1]{\textbf{#1}}
\newcommand{\Lcal}{\mathcal{L}}
\newcommand{\Lscr}{\mathscr{L}}
\newcommand{\Mscr}{\mathscr{M}}
\newcommand{\real}{\mathbb{R}}
\newcommand{\Rn}{\mathbb{R}^n}
\newcommand{\RN}{\mathbb{R}^N}
\newcommand{\arn}{\real^n}
\newcommand{\card}{\#}
\renewcommand{\l}{\lambda}
\newcommand{\F}{\mathcal{F}}
\newcommand{\D}{\mathcal{D}}
\newcommand{\lip}{\operatorname{lip}}
\newbox\tr@tto
\def\medint{\displaystyle\copy\tr@tto\kern-10.4pt\int}
\numberwithin{equation}{section}
\newcommand{\mA}{\mathcal{A}}
\newcommand{\mV}{\mathcal{V}}
\newcommand{\mC}{\mathcal{C}}
\newcommand{\mD}{\mathcal{D}}
\newcommand{\mS}{\mathcal{S}}
\newcommand{\q}{\mathcal{Q}}
\title{ON RANK ONE CONVEX FUNCTIONS THAT ARE HOMOGENEOUS OF DEGREE ONE\footnote{Version: April 2015}}
\author{Bernd Kirchheim \, and \, Jan Kristensen}
\date{}
\begin{document}

\maketitle

\begin{abstract}
We show that positively $1$--homogeneous rank one convex functions are convex at $0$ and at matrices of
rank one. The result is a special case of an abstract convexity result that we establish for positively $1$--homogeneous
directionally convex functions defined on an open convex cone in a finite dimensional vector space.
From these results we derive a number of consequences including various generalizations of the Ornstein $\LL^1$ non 
inequalities. Most of the results were announced in ({\em C.~R.~Acad.~Sci.~Paris, Ser.~I 349 (2011), 407--409}).
\end{abstract}

\noindent
{\footnotesize {\bf AMS Classifications.}  49N15; 49N60; 49N99.}

\bigskip

\bigskip
\section{Introduction  and Statement of Results }  
\label{intro}

It is often possible to reformulate questions about sharp integral estimates for the derivatives of mappings 
as questions about certain semiconvexity properties of associated integrands (we refer the reader to
\cite{Dac89} for a survey of the relevant convexity notions and their roles in the calculus of
variations). Particularly fascinating examples of the utility of this viewpoint are presented in \cite{I02}, where 
the fact that rank one convexity is a necessary condition for quasiconvexity that is possible to check in
concrete cases, leads to a long list of tempting conjectures, all of which -- if proven -- would have significant impact
on the foundations of Geometric Function Theory in higher dimensions. The obstacle to success
is that  rank one convexity in general does not imply quasiconvexity. This negative result,
known as Morrey's conjecture \cite{Mo52}, was established in \cite{Sv92}.
It does, however, not exclude the possibility that some of these semiconvexity notions agree within 
more restricted classes of integrands having natural homogeneity properties. A very interesting case 
being the positively $1$--homogeneous integrands. Their semiconvexity properties correspond to 
$\LL^1$--estimates, and hence are difficult to establish using interpolation or other harmonic analysis
tools. In this paper we investigate the convexity properties of such integrands. In particular it is
shown in Corollary \ref{homorc1} that a positively $1$--homogeneous and rank one convex integrand must 
be convex at $0$ and at all matrices of rank one. While this class of integrands has been 
investigated several times before, see in particular \cite{Sv91,Mu92,DH96,DM07}, the surprising 
automatically improved convexity at all matrices of rank at most one remained unnoticed.  
We remark that \cite{Sv91} (and also \cite{Mu92} and \cite{DH96}) yield examples of $1$--homogeneous 
rank one convex functions that are {\em not convex} at some matrices of rank two so that these results 
are indeed sharp. The key convexity result is best stated in abstract terms, and we take a moment to introduce 
the requisite terminology (see also Section 2 for notation and terminology). 
Let $\mV$ be a finite dimensional real vector space, $\mC$ an open convex cone in $\mV$ and $\D$ a 
cone of directions in $\mV$. More precisely, $\mV$ is considered with some norm $\| \cdot \|$ and $\mC$ is
an open subset of $\mV$ with the property that $sx+ty \in \mC$ when $s$, $t>0$ and $x$, $y \in \mC$. The set
$\mD$ is also a subset of $\mV$, it gives the directions in which we have convexity, and we assume that 
$tx \in \D$ for all $x \in \mD$, $t\in \R$, and that it contains a basis for $\mV$. A real--valued function 
$f\colon \mC \to \R$ is $\mD$--convex provided its restrictions to line segments in $\mC$ in directions of $\mD$ are convex. 
The function $f$ is positively $1$--homogeneous provided $f(tx)=tf(x)$ for all $t>0$ and all $x \in \mC$. 
Finally we say that $f$ has linear growth at infinity if we can find a constant $c>0$ such that 
$\abs{f(x)} \leq c(\| x \| + 1)$ holds for all $x \in \mC$.




\begin{theorem}\label{convex}
Let $\mC$ be an open convex cone in a normed finite dimensional real vector space $\mV$, and $\mD$ a cone of
directions in $\mV$ such that $\mD$ spans $\mV$.

If $f \colon \mC \to \R$ is $\mD$--convex and positively $1$--homogeneous, then $f$ is convex at each point 
of $\mC \cap \mD$. More precisely, and in view of the homogeneity, for each $x_0 \in \mC \cap \mD$ there exists a linear 
function $\ell \colon \mV \to \R$ satisfying $\ell(x_{0})=f(x_{0})$ and $f \geq \ell$ on $\mC$.
\end{theorem}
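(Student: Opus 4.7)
The strategy is to combine positive 1-homogeneity with the fact that $x_0 \in \mD$ to upgrade the directional $\mD$-convexity of $f$ to joint convexity on 2-dimensional sections through $x_0$, extract a subadditivity of $f$ at $x_0$, and conclude by Hahn-Banach.

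First I would establish the following 2D convexity: for every $y \in \mC$ not parallel to $x_0$, $f$ restricted to $P \cap \mC$ with $P := \spn(x_0, y)$ is jointly convex. Parametrize $P$ by $(s, \tau) \mapsto sx_0 + \tau y$. The $\mD$-convexity in the direction $x_0$ gives $f_{ss} \geq 0$ on $P \cap \mC$, while the 1-homogeneity yields Euler's identity $s f_s + \tau f_\tau = f$; differentiating this identity once in $s$ and once in $\tau$ expresses the mixed and $\tau\tau$-derivatives in terms of $f_{ss}$, and forces the Hessian of $f|_P$ to equal $f_{ss}$ times a rank-one positive semidefinite matrix. Hence this Hessian is positive semidefinite, and $f|_{P \cap \mC}$ is convex (a standard mollification argument handles the non-smooth case).

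Applying this with $z_1, z_2 \in \mC$ satisfying $z_1 + z_2 = x_0$ and $P := \spn(z_1, z_2) = \spn(x_0, z_1)$, the midpoint convexity of $f|_P$ together with the homogeneity identity $f(x_0/2) = f(x_0)/2$ yields $f(x_0) \leq f(z_1) + f(z_2)$. More generally, I claim that for any decomposition $x_0 = z_1 + \cdots + z_k$ with $z_i \in \mC$ one has $\sum_i f(z_i) \geq f(x_0)$. I would prove this by considering a minimizer of the sum (attained by compactness and the local Lipschitz continuity of $\mD$-convex functions) and exploiting the perturbations $(z_i^*, z_j^*) \mapsto (z_i^* + sx_0, z_j^* - sx_0)$, which preserve the total sum and depend convexly on $s$ thanks to $x_0 \in \mD$, combined with the $k = 2$ step.

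Finally, the sublinear functional $q(v) := \inf\{ \sum_i f(z_i) : z_i \in \mC,\ \sum_i z_i = v\}$ on $\mC$ satisfies $q \leq f$ and, by the claim, $q(x_0) = f(x_0)$; a Hahn-Banach theorem for sublinear functionals on an open convex cone then furnishes the required linear $\ell : \mV \to \R$ with $\ell \leq q \leq f$ on $\mC$ and $\ell(x_0) = f(x_0)$. The main obstacle is the claim for general $k \geq 3$: a naive induction from the $k = 2$ case would require subadditivity of $f$ at points outside $\mD$, which we do not have, so one must exploit the direction $x_0 \in \mD$ and the 1-homogeneity more subtly through the optimality analysis sketched above.
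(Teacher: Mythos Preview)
Your Hessian computation in the smooth case is fine, but the reduction to it has a real gap: a standard mollification $f \ast \rho_\epsilon$ preserves $\mD$-convexity yet destroys positive $1$-homogeneity, so Euler's identity is no longer available and your rank-one Hessian formula collapses. Worse, the $2$D joint-convexity claim is false at the level of generality your argument actually uses. On the plane $P$ take $f(s,\tau) = -|\tau|$: it is positively $1$-homogeneous, Lipschitz, and convex (indeed constant) along every line in the direction $x_0 = (1,0)$, yet $f(1,0) = 0 > -1 = \tfrac12\bigl(f(1,1)+f(1,-1)\bigr)$. The same example breaks your two-term subadditivity at $x_0$: with $z_1 = (\tfrac12,1)$, $z_2 = (\tfrac12,-1)$ one gets $f(z_1) + f(z_2) = -2 < 0 = f(x_0)$. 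Your $2$D step invokes only convexity in the single direction $x_0$ together with homogeneity, so it cannot distinguish the theorem's setup from this example; the hypothesis that $\mD$ spans $\mV$ never enters that step. (A secondary issue: the constraint set $\{(z_i) \in \mC^k : \sum z_i = x_0\}$ in your $k \geq 3$ step is non-compact whenever $\mC$ contains a line, so a minimiser need not exist.)

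The paper avoids $2$D joint convexity altogether. From the bound $f(x+y) \leq f^\infty(x) + f(y)$ for $x \in \mC \cap \mD$ (Lemma~\ref{keybound}) it obtains the one-sided monotonicity $f(y) - f(x_0) \geq \lambda^{-1}\bigl(f(x_0 + \lambda(y-x_0)) - f(x_0)\bigr)$ for all $y \in \mC$ and $\lambda \in (0,1)$; note that the counterexample above satisfies this with equality, so it is strictly weaker than your claim. Letting $\lambda \downarrow 0$ produces a \emph{$\mD$-subcone} for $f$ at $x_0$: a $\mD$-convex positively $1$-homogeneous $g \colon \mV \to \R$ with $f(y) \geq f(x_0) + g(y-x_0)$ on $\mC$. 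The linear support is then built by induction on $\dim \mV$: pick a basis $e_1,\ldots,e_n \in \mD$, apply the subcone construction to $g$ at $e_1$, invoke the induction hypothesis on $\spn(e_2,\ldots,e_n)$, and extend by $\ell(te_1 + \tilde y) = t g(e_1) + \tilde\ell(\tilde y)$. It is precisely in this inductive step that the spanning of $\mD$ is used.
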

We state separately the special case corresponding to rank one convexity for functions defined on the space of
real $N$ by $n$ matrices:

\begin{corollary}\label{homorc1}
A rank--one convex and positively $1$--homogeneous function $f\colon \R^{N \times n} \to \R$ is convex at each point of the
rank one cone $\{ x \in \R^{N \times n} : \, \mathrm{rank }\ x \leq 1 \}$. 
\end{corollary}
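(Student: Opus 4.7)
The plan is to obtain Corollary \ref{homorc1} as an immediate specialization of Theorem \ref{convex}, so the main task is to match the abstract setup to the matrix setting and verify that the hypotheses of the theorem hold.

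First I would take $\mV = \R^{N\times n}$ equipped with any norm (e.g.\ the Frobenius norm), and set $\mC = \mV$ itself. This is trivially an open convex cone, since for any $s,t>0$ and $x,y\in\mV$ we have $sx+ty\in\mV$. For the cone of directions I would take
\[
\mD \;=\; \{\, x \in \R^{N\times n} :\ \mathrm{rank}\,x \leq 1\,\},
\]
i.e.\ the rank-one cone itself. Since the rank of a matrix is invariant under scalar multiplication, $tx\in\mD$ whenever $x\in\mD$ and $t\in\R$, so $\mD$ is indeed a (two-sided) cone of directions in the sense required. Moreover $\mD$ contains all the elementary matrices $E_{ij}=e_i\otimes e_j$, which already form a basis of $\R^{N\times n}$; in particular $\mD$ spans $\mV$.

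Next I would observe that, by definition, rank-one convexity of $f$ means precisely that $t\mapsto f(x+t\xi)$ is convex on $\R$ whenever $x\in\R^{N\times n}$ and $\xi$ is a matrix of rank at most one; in other words, $f$ is $\mD$-convex in the sense of the paper. Together with the standing hypothesis that $f$ is positively $1$-homogeneous, this places $f$ exactly within the scope of Theorem \ref{convex}.

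Applying Theorem \ref{convex} with this choice of $\mC$ and $\mD$, I conclude that for every $x_0\in\mC\cap\mD=\mD$ there is a linear function $\ell\colon\R^{N\times n}\to\R$ with $\ell(x_0)=f(x_0)$ and $\ell\leq f$ on all of $\R^{N\times n}$, which is the claimed convexity of $f$ at every point of the rank-one cone. There is really no genuine obstacle beyond the bookkeeping of confirming that all abstract hypotheses are met; the heavy lifting has been done in Theorem \ref{convex}, and the corollary simply records the most important concrete instance, corresponding to the classical case $\D = $ rank-one matrices.
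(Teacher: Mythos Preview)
Your proposal is correct and is exactly the paper's approach: the corollary is stated as the special case $\mV=\R^{N\times n}$, $\mC=\mV$, $\mD=\{\mathrm{rank}\leq 1\}$ of Theorem~\ref{convex}, and the paper gives no separate proof beyond this identification. Your verification that $\mD$ is balanced and spanning (via the elementary matrices) and that rank-one convexity is precisely $\mD$-convexity is all that is needed.
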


A remarkable result of Ornstein \cite{Or62} states that given a set
of linearly independent linear homogeneous constant--coefficient 
differential operators in $n$ variables of order $k$, say 
$Q_0$, $Q_{1}, \, \dots \, , \, Q_{m}$, and
any number $K>0$, there is a $\CC^{\infty}$ smooth function $\phi$ vanishing
outside the unit cube, $\phi \in \CC^{\infty}_{c}( (0,1)^{n})$, such that $\int \! |Q_{0}\phi | > K$ while 
$\int \! |Q_{j}\phi | < 1$ for all $1 \leq j \leq m$.

This result convincingly manifests the fact that estimates for differential
operators, usually based on Fourier multipliers and Calderon--Zygmund operators,
can be obtained for all $\LL^p$ with $p\in (1,\infty)$ by interpolation and 
(more directly) even for the weak-$\LL^1$ spaces but fail to extend to the 
limit case $p=1$. Ornstein used his result to answer a question by
L.~Schwarz by constructing a distribution in the plane that was not a 
measure but whose first order partial derivatives were distributions of first order. 
He then gave a very technical and rather concise proof of his statement for general 
dimension $n$ and degree $k$. 

Theorem \ref{convex}, and Corollary \ref{homorc1}, yield when combined with arguments
from the calculus of variations, various generalizations of Ornstein's 
$\LL^1$--non--inequality. In particular the approach allows also a streamlined and 
very elementary proof of an extension of Ornstein's result to the wider context of $x$--dependent 
and vector--valued operators:

\begin{theorem}\label{orne}
Let $\mathbb{V}$, $\mathbb{W}_1$, $\mathbb{W}_2$ be finite dimensional inner product spaces, and consider two $k$--th order
linear partial differential operators with locally integrable coefficients 
$$
a^{i}_{\alpha} \in \LL^{1}_{\rm loc}(\Rn , \Lscr (\mathbb{V},\mathbb{W}_{i})) \quad \quad (i=1,2)
$$ 
defined by
$$
A_{i}(x,D)\varphi := \sum_{\abs{\alpha}=k} a^{i}_{\alpha}(x)\partial^{\alpha}\varphi \quad \quad (i=1,2)
$$
for smooth and compactly supported test maps $\varphi \in \CC^{\infty}_{c}(\Rn , \mathbb{V})$.

Then there exists a constant $c$ such that
$$
\| A_{2}(x,D)\varphi \|_{\LL^1} \leq c\| A_{1}(x,D)\varphi \|_{\LL^1}
$$
holds for all $\varphi \in \CC^{\infty}_{c}(\Rn , \mathbb{V})$ if and only if there exists 
$C \in \LL^{\infty}(\Rn , \Lscr (\mathbb{W}_{1},\mathbb{W}_{2}))$ with $\| C \|_{\LL^\infty}\leq c$ such that
$$
a^{2}_{\alpha}(x)=C(x)a^{1}_{\alpha}(x) 
$$
for almost all $x$ and each multi--index $\alpha$ of length $k$.
\end{theorem}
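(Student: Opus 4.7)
The direction ``$\Leftarrow$'' is routine: assuming $a^{2}_{\alpha}(x)=C(x)a^{1}_{\alpha}(x)$ a.e., one has pointwise $A_{2}(x,D)\varphi(x)=C(x)A_{1}(x,D)\varphi(x)$ and hence $|A_{2}(x,D)\varphi(x)|\leq c|A_{1}(x,D)\varphi(x)|$, which integrates to the desired $\LL^{1}$-inequality. For the nontrivial direction ``$\Rightarrow$'' I would proceed in three stages: localise the $\LL^{1}$-inequality to a constant-coefficient one at Lebesgue points, apply Theorem \ref{convex} in the constant-coefficient case to obtain a pointwise bound on symbols, and then select $C(x)$ measurably.

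\textbf{Stage 1: localisation.} For $\phi\in\CC^{\infty}_{c}(\Rn,\mathbb{V})$, $\eps>0$, and a common Lebesgue point $x_{0}$ of all the $a^{i}_{\alpha}$, the rescaled test map $\varphi_{\eps}(x):=\eps^{k}\phi((x-x_{0})/\eps)$ satisfies $\partial^{\alpha}\varphi_{\eps}(x)=(\partial^{\alpha}\phi)((x-x_{0})/\eps)$ for $|\alpha|=k$. The change of variables $y=(x-x_{0})/\eps$ combined with Lebesgue differentiation yields
\[
\eps^{-n}\|A_{i}(x,D)\varphi_{\eps}\|_{\LL^{1}}\longrightarrow \|A_{i}(x_{0},D)\phi\|_{\LL^{1}}\qquad(\eps\downarrow 0),
\]
where $A_{i}(x_{0},D)$ denotes the constant-coefficient operator with symbol $\sum a^{i}_{\alpha}(x_{0})\xi^{\alpha}$. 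The assumed inequality therefore passes to the limit and produces a constant-coefficient $\LL^{1}$-inequality at almost every $x_{0}$.

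\textbf{Stage 2: constant-coefficient case via Theorem \ref{convex}.} Write $B_{i}:=A_{i}(x_{0},D)$ and view the symbol as a linear map $B_{i}\colon \mathbb{E}\to\mathbb{W}_{i}$, where $\mathbb{E}$ denotes the space of $\mathbb{V}$-valued symmetric $k$-tensors on $\Rn$. Consider
\[
g(X):=c|B_{1}X|-|B_{2}X|,\qquad X\in\mathbb{E};
\]
it is positively $1$-homogeneous, Lipschitz and even, and Stage 1 gives $\int g(D^{k}u)\,dx\geq 0$ for every $u\in\CC^{\infty}_{c}$. The wave cone $\mathcal{D}:=\{\xi^{\otimes k}\otimes v:\xi\in\Rn,\,v\in\mathbb{V}\}$ spans $\mathbb{E}$ by polarisation. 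I would pass to the $D^{k}$-quasiconvex envelope $\tilde g$, which is $\mathcal{D}$-convex, inherits positive $1$-homogeneity through the scaling $u\mapsto\lambda u$, and satisfies $\tilde g(0)=g(0)=0$ in view of the integrated inequality. Theorem \ref{convex}, applied at $0\in\mathcal{D}\cap\mathbb{E}$, then supplies a linear $\ell\colon\mathbb{E}\to\R$ with $\ell(0)=0$ and $\tilde g\geq\ell$; hence $g\geq\tilde g\geq\ell$, and the evenness $g(-X)=g(X)$ upgrades this to $g\geq|\ell|\geq 0$. Thus $|B_{2}X|\leq c|B_{1}X|$ holds for every $X\in\mathbb{E}$.

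\textbf{Stage 3 and main obstacle.} The pointwise bound gives $\ker B_{1}\subset\ker B_{2}$, so $B_{2}$ factors through $B_{1}$ as $B_{2}=C(x_{0})B_{1}$ for a unique $C(x_{0})$ of operator norm $\leq c$ defined on $\operatorname{im}B_{1}$ and extended by zero on its orthogonal complement; evaluating this identity on a basis tensor $e_{\alpha}\otimes v\in\mathbb{E}$ recovers $a^{2}_{\alpha}(x)=C(x)a^{1}_{\alpha}(x)$ for a.e.\ $x$. Measurability of $x\mapsto C(x)$ follows because $C(x)$ can be expressed as a Borel function of the coefficient matrices, for instance via the Moore--Penrose pseudoinverse of the block operator assembled from $(a^{1}_{\alpha}(x))_{|\alpha|=k}$, and the uniform pointwise bound then places $C$ in $\LL^{\infty}$. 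The technical hurdle I expect is entirely in Stage 2: verifying that $\tilde g$ is genuinely real-valued and positively $1$-homogeneous on a sufficiently large open convex cone so that Theorem \ref{convex} applies cleanly. A cautious implementation would replace $\tilde g$ by the rank-one convex envelope, or truncate $g$ from below, to guarantee finiteness; this is exactly where the rank-one convexity machinery developed earlier in the paper does essential work.
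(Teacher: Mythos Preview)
Your proposal is correct and follows essentially the same route as the paper. The paper packages your Stage~2 as a standalone result (Theorem~\ref{nonlin}) and then derives Theorem~\ref{orne} from it; the relaxation $\mathcal{F}$ there is exactly your envelope $\tilde g$, and the finiteness worry you flag is handled precisely by Lemma~\ref{finiterc} (since $\mathcal{F}(0)\geq 0>-\infty$, rank--one convexity forces $\mathcal{F}>-\infty$ everywhere). Two minor differences: your localisation uses a single rescaling plus Lebesgue differentiation, whereas the paper uses periodic rescaling and the Riemann--Lebesgue lemma (then separability to get a $\phi$--independent null set); and in the final step you exploit evenness of $g$ to kill the linear functional, while the paper uses $1$--homogeneity of $\mathcal{F}$ evaluated at $\pm\zeta$. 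Both variants work.
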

We derive this result from a more general nonlinear version stated in Theorem \ref{nonlin}. This result is
in turn a consequence of Theorem \ref{convex} and standard arguments from the calculus of variations.
The case of constant coefficient homogeneous partial differential operators was given in \cite{KK11}.

The link between an Ornstein type result, concerning the failure of the $\LL^1$--version of Korn's 
inequality, and semiconvexity properties of the associated integrand -- though expressed in a dual 
formulation -- was observed already in  \cite{CFM05a}. There it was utilized in an ad-hoc 
construction which required a very sophisticated refinement in \cite{CFMM05b}, where it was 
transferred from an essentially two--dimensional situation into three dimensions.
Our arguments handle these situations with ease, see Corollary \ref{hessian} below. 

It is well--known that the distributional Hessian of a convex function $f\colon \Rn \to \R$ is a matrix--valued Radon
measure (see for instance \cite{reshet}). The natural question arises if this is 
valid also for the semi-convexity notions important in the vectorial calculus of variations. In \cite{CFMM05b} a 
fairly complicated construction was introduced to show that this is not true for
rank one convex functions defined on symmetric $2 \times 2$ matrices.
Here we wish to address the question of regularity of second derivatives when the function 
$f \colon \mV \to \R$ is merely $\mD$--convex. First note that when $f$ is $\CC^2$, then $\mD$--convexity is equivalent to 
$D^{2}f(x)[e,e] \geq 0$ for all $x \in \mV$ and $e \in \mD$. Hence using mollification and that a positive distribution is 
a Radon measure it follows that the Hessian of $\mD$--convex functions $f\colon \mV \to \R$ will be a Radon measure provided 
we can select a basis $(e_{j})$ for $\mV$ consisting of vectors from $\mD$ such that additionally $e_{i}\pm e_{j} \in \mD$ for 
all $1 \leq i < j \leq \mathrm{dim}\mV$. Indeed under this assumption on $\mD$ we can use polarisation to see that all second 
order partial derivatives (in the coordinate system defined by $(e_{j})$) are  measures. It is not difficult to show that the 
above condition on the cone of directions $\mD$ is equivalent to the statement that the only functions $f\colon \mV \to \R$ with 
the property that both $\pm f$ are $\mD$--convex, called $\mD$--affine functions, are the affine functions. This
in turn is equivalent to saying that the cone of $\mD$--convex quadratic forms on $\mV$,
$$
\q_{\mD} = \bigl\{ q : \, q \mbox{ is a $\mD$--convex quadratic form on } \mV \bigr\}
$$
is line--free. It is remarkable, but in accordance with our theme on Ornstein $\LL^1$ non--inequalities, that if the
cone of directions $\mD$ satisfies the technical condition
\begin{equation}\label{mysterious}
\exists \, \ell \in \mV^{\ast} \, \mbox{ such that } \, \overline{\mD} \cap \mathrm{ker} \ell = \{ 0 \} ,
\end{equation}
then this simple sufficient condition is also necessary.

\begin{theorem}\label{ghessian}
Let $\mV$ be a normed finite dimensional real vector space and $\mD$ be a cone of directions in $\mV$ satisfying the condition (\ref{mysterious}).
Then the distributional Hessian of $\mD$--convex functions $f\colon \mV \to \R$ is a $\odot^{2}\mV$--valued Radon
measure on $\mV$ if and only if the only $\mD$--affine functions are the affine functions.
Moreover, when there exists a nontrivial $\mD$--affine function, then there also exists a $\mD$--convex $\CC^1$ function $f\colon \mV \to \R$ 
with a locally H\"{o}lder continuous
\footnote{$\forall K \Subset \mV$ $\forall \alpha <1$ $\exists c>0$ \, $\forall$ $x$, $y \in K$: \, $\| Df(x)- Df(y) \| \leq c|x-y|^{\alpha}$} 
first differential $Df \colon \mV \to \mV^\ast$, but for which the distributional Hessian fails to be
a measure in {\em any} open nonempty subset $O$ of $\mV$, in the sense that for some unit vector $e \in \mV$ we have
$$
\sup \left\{ \int_{O} \! f(x) D^{2}\varphi (x)(e,e) \, \dd x \, : \, \varphi \in \CC^{\infty}_{c}(O) \mbox{ and } 
\sup \abs{\varphi} \leq 1 \right\} = \infty .
$$ 
\end{theorem}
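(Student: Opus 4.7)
The plan is to derive the equivalence from the construction in the ``moreover'' clause together with the discussion preceding the theorem. The direction \emph{only $\mD$--affine are affine $\Rightarrow$ Hessian is a Radon measure} is essentially spelled out there: line-freeness of $\q_\mD$ produces a basis $e_1,\dots,e_d$ of $\mV$ with $e_i\pm e_j\in\mD$ for $i\neq j$. For any $\mD$--convex $f$, mollification gives $D^2 f(e,e)\geq 0$ as a distribution for every $e\in\mD$, so each $D^2 f(e_i\pm e_j,e_i\pm e_j)$ is a non-negative Radon measure. The polarization identity
\[
4\,\partial_i\partial_j f \;=\; D^2 f(e_i+e_j,e_i+e_j) - D^2 f(e_i-e_j,e_i-e_j)
\]
then exhibits every mixed second partial as a signed Radon measure. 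The converse direction of the equivalence is the contrapositive of the ``moreover'' clause, so the real content is the construction of a counterexample.

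To build it, I first extract a nontrivial $\mD$--affine quadratic form $q$ by taking the homogeneous second-order part at the origin of a given nontrivial $\mD$--affine function: $q\neq 0$, and its polar form $B$ satisfies $B(e,e)=0$ for every $e\in\mD$. The central observation is that for any convex $\psi\colon\R\to\R$ the composition $\psi\circ q$ is $\mD$--convex, since along a $\mD$--line $t\mapsto x+te$ one has $q(x+te)=q(x)+2tB(x,e)$, which is affine in $t$. By the same principle, $\chi\circ\ell$ is $\mD$--convex for any convex $\chi$ and the functional $\ell$ supplied by (\ref{mysterious}). Using non-negative combinations of these two families, and the transversality $\overline{\mD}\cap\ker\ell=\{0\}$, I will produce a compactly supported $\mD$--convex building block $\Psi\colon\mV\to\R$ of class $C^{1,\beta}$ for every $\beta<1$ but not $C^{1,1}$, carrying a non-measure singularity of $\partial_{e_0}^{2}\Psi$ along a hypersurface inside its support, where $e_0$ is a fixed direction with $q(e_0)\neq 0$.

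The full counterexample is then a Weierstrass-type series
\[
f(x) \;=\; \sum_{n=1}^{\infty} a_n\lambda_n^{2}\,\Psi\!\left(\frac{x-x_n}{\lambda_n}\right),
\]
with $\{x_n\}$ dense in $\mV$, $\lambda_n\downarrow 0$, and $a_n>0$ balanced so that the partial sums converge in $C^{1,\alpha}$ for every $\alpha<1$ while the rescaled singularities of $\partial_{e_0}^{2}\Psi$ accumulate at every point. Since $\mD$--convexity is closed under locally uniformly convergent sums of $\mD$--convex functions, $f$ is $\mD$--convex. Any nonempty open $O\subset\mV$ contains infinitely many of the rescaled pathological regions, and standard atomic estimates show that their contributions to $\int_O f\,D^2\varphi(e_0,e_0)\dd x$ diverge when one tests against $\varphi\in C^{\infty}_{c}(O)$ with $\sup|\varphi|\leq 1$, producing the claimed identity.

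The main obstacle is the simultaneous engineering of $\Psi$: it must be $\mD$--convex, compactly supported, and $C^{1,\beta}$ for every $\beta<1$, while still carrying a singularity in the $e_0$--direction whose rescaled copies are not summable as measures. This is precisely where (\ref{mysterious}) is indispensable: the transversality of $\overline{\mD}$ and $\ker\ell$ allows sufficiently large convex bumps built from $\ell$ to dominate $\psi\circ q$ outside a prescribed neighborhood and thereby to cut it off without destroying $\mD$--convexity. Without (\ref{mysterious}) compactly supported $\mD$--convex perturbations of $\psi\circ q$ of this kind need not exist at all. Once $\Psi$ is in hand, the Hölder regularity of the sum, its $\mD$--convexity, and the non-measure conclusion on every open set follow from fairly standard atomic decomposition estimates, with the choice of $(a_n,\lambda_n)$ made so that frequencies $\lambda_n^{-1}$ enter with the critical weight $\lambda_n^{\beta-1}$ for the desired Hölder exponent $\beta$.
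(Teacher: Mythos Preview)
Your approach is quite different from the paper's and contains a fatal gap: a nontrivial compactly supported $\mD$--convex function cannot exist. If $\Psi$ is $\mD$--convex on $\mV$ with compact support, then for every $e\in\mD\setminus\{0\}$ and every $x\in\mV$ the map $t\mapsto\Psi(x+te)$ is convex on $\R$ and vanishes outside a bounded interval; its right derivative is non--decreasing and equals $0$ near $\pm\infty$, hence is identically $0$, so $\Psi(x+\cdot\,e)\equiv 0$. Since $\mD$ spans $\mV$, this forces $\Psi\equiv 0$. Thus the ``compactly supported $\mD$--convex building block $\Psi$'' on which your Weierstrass series rests cannot be produced, and without compact support the rescaled translates overlap and neither convergence nor $\mD$--convexity of the sum is clear. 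A related difficulty: any single function of the form $\psi\circ q+\chi\circ\ell$ with $\psi,\chi$ convex already has a Radon--measure Hessian (each summand is the pullback of a one--variable convex function, whose second derivative is a non--negative measure, by a smooth map), so even ignoring support issues such a building block will not carry a non--measure singularity.

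The paper's route is indirect and, crucially, uses Theorem~\ref{convex}. One works in the open convex cone $\mC=\mathrm{int}\,\q_{\mD}\subset\odot^2\mV$ and relaxes the integrand $F(\mu)=-\|\mu\|$ via Theorem~\ref{relaxform}, obtaining a rank--one convex, positively $1$--homogeneous $\F$ on $\mC$. If $\F$ were finite anywhere, Theorem~\ref{convex} at the rank--one point $\mu_0=\ell\otimes\ell\in\mC$ (this is exactly where (\ref{mysterious}) enters) would give $\F(\mu_0)\leq\tfrac12\bigl(\F(\mu_0+t\mu_m)+\F(\mu_0-t\mu_m)\bigr)\leq-\tfrac12\bigl(\|\mu_0+t\mu_m\|+\|\mu_0-t\mu_m\|\bigr)\to-\infty$, where $\mu_m$ is the polar form of a nontrivial $\mD$--affine quadratic (so $\mu_0\pm t\mu_m\in\mC$ for all $t$). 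Hence $\F\equiv-\infty$, which by the additional clause in Theorem~\ref{relaxform} yields, for each $\mu\in\mC$ and $\varepsilon>0$, a $\varphi\in\CC^\infty_c(Q)$ with $\mu+D^2\varphi(x)\in\mC$ everywhere (so $\tfrac12\mu(x,x)+\varphi(x)$ is smooth and $\mD$--convex), $\int_Q\|\mu+D^2\varphi\|>1/\varepsilon$, yet $\|\varphi\|_\infty+\|D\varphi\|_\infty+[D\varphi]_\omega<\varepsilon$. The counterexample $f$ is then obtained by a Baire category argument in a complete metric space of $\mD$--convex $\CC^1$ functions with weighted $\omega$--H\"older norm: these smooth, compactly supported \emph{perturbations $\varphi$ of a background $\mD$--convex quadratic} (which is what circumvents the obstruction above) show that the closed sets where the Hessian has bounded variation on a fixed dyadic cube are nowhere dense.
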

We remark that the condition on $\ell \in \mV^{\ast}$ given in (\ref{mysterious}) is equivalent to stating that the form
$\ell \otimes \ell$ is an interior point of the cone $\q_{\mD}$. Let us record here some examples of cones of directions for the case of
square matrices $\mV=\R^{n \times n}$ that satisfy condition (\ref{mysterious}):
$$
\mD (\xi_{0},\varepsilon_{0}) = \biggl\{ a \otimes b : \, a , \, b \in \Rn \mbox{ and } \abs{a \cdot \xi_{0}b} 
\geq \varepsilon_{0}\abs{a}\abs{b} \biggr\} ,
$$
where $\xi_{0} \in \R^{n \times n}$ and $\varepsilon_{0}>0$ are fixed and chosen so $\mD (\xi_{0},\varepsilon_{0})$ spans $\R^{n \times n}$. 
Indeed, with the usual identifications $(\R^{n \times n})^{\ast} \cong \R^{n \times n}$,
for such cones we have that $\xi_{0} \otimes \xi_{0}$ is an interior point of $\q_{\mD (\xi_{0},\varepsilon_{0})}$. 
Since any $2\times 2$ minor of $n \times n$ matrices defines a nontrivial rank--one affine function on $\R^{n \times n}$ 
we infer in particular the existence of $\mD (\xi_{0},\varepsilon_{0})$--convex $\CC^{1,\alpha}$ functions on $\R^{n\times n}$ whose
Hessians are nowhere a measure. When we restrict attention to functions defined on {\em symmetric} real $n \times n$ matrices 
$\R^{n \times n}_{\rm sym}$ the situation becomes more satisfying since there the rank--one cone 
$\mD^{n}_{\rm sym}=\{ t \cdot a \otimes a : \, t \in \R \mbox{ and }a \in \Rn \}$ satisfies the condition (\ref{mysterious}). Indeed 
$\mathrm{Id} \otimes \mathrm{Id}$ is an interior point in $\q_{\mD^{n}_{\rm sym}}$. We state this result separately:

\begin{corollary}\label{hessian}
Let $n \geq 2$ be an integer. Then there exists a rank--one convex $\CC^1$ function $f\colon \R^{n \times n}_{\rm sym} \to \R$ with a locally
H\"{o}lder continuous first derivative, but whose distributional Hessian $D^{2}f$ is not a bounded measure in any open nonempty subset 
$O$ of $\R^{n \times n}_{\rm sym}$ in the sense that for some $e \in \R^{n \times n}_{\rm sym}$ we have
$$
\sup \left\{ \int_{O} \! f(x) D^{2}\varphi (x)(e,e) \, \dd x \, : \, \varphi \in \CC^{\infty}_{c}(O) \mbox{ and } 
\sup \abs{\varphi} \leq 1 \right\} = \infty .
$$
In particular, the function $f$ cannot be locally polyconvex at {\em any} $\xi \in \R^{n \times n}_{\rm sym}$.
\end{corollary}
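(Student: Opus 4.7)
The plan is to apply Theorem \ref{ghessian} to $\mV=\R^{n\times n}_{\rm sym}$ (equipped with the Frobenius inner product) and $\mD=\mD^n_{\rm sym}$, since $\mD^n_{\rm sym}$--convexity on symmetric matrices is precisely rank--one convexity.

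To invoke the second half of the theorem, I verify its hypotheses. Condition (\ref{mysterious}) holds with $\ell(\xi)=\mathrm{tr}(\xi)$, since $\ell(a\otimes a)=|a|^2$ and hence $\overline{\mD^n_{\rm sym}}\cap\ker\ell=\{0\}$, as already noted in the paragraph preceding the corollary. A nontrivial $\mD^n_{\rm sym}$--affine function is furnished by any principal $2\times 2$ minor $q(M)=M_{ii}M_{jj}-M_{ij}^2$ with $1\le i<j\le n$: since $q(a\otimes a)=a_i^2 a_j^2-(a_ia_j)^2=0$, both $\pm q$ are $\mD$--convex while $q$ is manifestly not affine. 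Theorem \ref{ghessian} therefore furnishes a rank--one convex $\CC^1$ function $f\colon\R^{n\times n}_{\rm sym}\to\R$ with locally H\"older continuous derivative whose distributional Hessian fails to be a Radon measure in any nonempty open $O\subset\R^{n\times n}_{\rm sym}$; this is the first assertion.

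For the polyconvexity claim I argue by contradiction. Suppose $f$ is polyconvex on some nonempty open $O$, so that $f=g\circ T$ on $O$ with $T\colon\R^{n\times n}_{\rm sym}\to\R^N$ the polynomial map collecting $M$ together with all its minors, and $g\colon\R^N\to\R$ convex. Mollifying $g$ to smooth convex $g_\eps$ and setting $f_\eps=g_\eps\circ T$, the classical chain rule gives
$$
D^2 f_\eps(e,e) = \bigl\langle (D^2 g_\eps)(T)\,DT\cdot e,\,DT\cdot e\bigr\rangle + \langle Dg_\eps(T),\,D^2 T(e,e)\rangle .
$$
The first summand is pointwise non--negative by convexity of $g_\eps$, while the second is uniformly locally bounded in $\eps$ (since $g$ is locally Lipschitz on $T(O)$ and $D^2 T$ is polynomial). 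Hence $D^2 f_\eps+C_K\,\mathrm{Id}\ge 0$ pointwise on any compact $K\subset O$, uniformly in $\eps$. Since $f_\eps\to f$ uniformly on compacts, passage to the distributional limit together with the standard fact that a non--negative distribution is a Radon measure yields that $D^2 f+C_K\,\mathrm{Id}$ is a non--negative matrix--valued Radon measure on $K$. Therefore $D^2 f$ itself is a signed Radon measure on $K$, and hence on $O$, contradicting the first assertion.

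The main obstacle is this last polyconvexity step, where the mollification argument and the passage to distributional limits must be executed carefully, using that the correction term involving $Dg_\eps(T)\cdot D^2 T$ is uniformly bounded. The remaining steps reduce the problem to a direct application of Theorem \ref{ghessian}.
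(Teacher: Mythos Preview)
Your proposal is correct and follows exactly the route the paper intends: the first assertion is obtained by applying Theorem \ref{ghessian} with $\mV=\R^{n\times n}_{\rm sym}$ and $\mD=\mD^n_{\rm sym}$, after verifying condition (\ref{mysterious}) via $\ell=\mathrm{tr}$ (equivalently, $\mathrm{Id}\otimes\mathrm{Id}\in\mathrm{int}\,\q_{\mD^n_{\rm sym}}$) and exhibiting a $2\times 2$ minor as a nontrivial $\mD$--affine function, precisely as indicated in the paragraph preceding the corollary. For the local polyconvexity claim the paper gives no separate argument, and your mollification plus chain--rule computation, showing that a locally polyconvex function has a distributional Hessian which is a Radon measure, is a valid way to supply the missing details.
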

Corollary \ref{hessian} was announced in \cite{KK11}. 

Finally, we remark that due to concentration effects on rank--$1$ matrices, see \cite{Al93}, our results also allow us to simplify
the characterization of BV Gradient--Young measures given in \cite{KR10b}, see Theorem \ref{cgym}. In fact, this was the original motivation 
for the work undertaken in the present paper. 

We end this introduction by giving a brief outline of the organization of the paper. In Section 2 we discuss and introduce our notation
and terminology, and also derive some preliminary results. The proofs of Theorem \ref{convex} and Corollary \ref{homorc1} follow in
Section 3. Section 4 is concerned with higher order derivatives of maps between finite dimensional spaces and relaxation of variational
integrals defined on them. Previous works in the calculus of variations on relaxation of multiple integrals depending on higher order derivatives
has mostly been based on {\em periodic test maps} whereas we use {\em compactly supported tests maps}. The two approaches are essentially
equivalent, and here we supply the details in the latter case culminating in Theorem \ref{relaxform}. The proofs of Theorems \ref{orne}, \ref{nonlin}
and \ref{ghessian} are presented in Section 5. Section 6 contains a discussion of BV gradient Young measures, and a characterization, stated
as Theorem \ref{cgym}, of these by duality with a certain subclass of quasiconvex functions.


\section{Preliminaries}\label{prelims}

This section fixes the notation, collects standard definitions and recalls some preliminary results that are all
more or less well--known.

We have attempted to use standard or at the least self--explanatory notation and terminology. Thus our notation 
follows closely that of \cite{rock} for convex analysis, \cite{Dac89} and \cite{Mu} for calculus of variations, and 
\cite{AFP} for Sobolev functions and functions of bounded variations. We refer the reader to these sources for further
background if necessary.

Let $\mV$ be a real vector space. A subset $C$ of $\mV$ is convex if it is empty or if for all points $x$ and $y$ in $C$ also
the segment $(x,y)$ between them is contained in $C$.
Let $f\colon \mV \to \bar{\R} := [-\infty , \infty ]$ be an extended real--valued function.
Then its effective domain is the set $\dom (f) := \{ x \in \mV : \, f(x) < \infty \}$ and its epigraph is the set
$\epi (f) := \{ (x,t) \in \mV \times \R : \, t \geq f(x) \}$. The function $f$ is convex if its epigraph is a convex subset
of the vector space $\mV \oplus \R$.

By a \textbf{cone of directions} $\mD$ in $\mV$ we understand throughout the paper a balanced and spanning cone in $\mV$: so 
$\mD \subseteq \mV$ has the properties that for all $x \in \mD$, $t \in \R$ we have $tx \in \mD$, and its 
linear hull equals $\mV$.

\begin{definition}\label{defDcon}
Let $\mV$ be a finite dimensional real vector space and $\mD$ a cone of directions.

\noindent
(i) A subset $\mA$ of $\mV$ is $\mD$--convex provided for any two points $x$, $y \in \mA$ with $x-y \in \mD$ also the
segment $[x,y] = \{ (1-\lambda )x+\lambda y : \, \lambda \in [0,1] \}$ is contained in $\mA$.

\noindent
(ii) For an arbitrary subset $\mS$ of $\mV$ and an extended real--valued function $f \colon \mS \to \bar{\R}$
we say that $f$ is \textbf{weakly $\mD$--convex} provided for any $x$, $y \in \mS$ such that $[x,y] \subset \mS$ and $x-y \in \mD$
the restriction $f|_{[x,y]}$ is convex. We say that $f$ is \textbf{$\mD$--convex} if the function 
$$
F(x) = \left\{
\begin{array}{ll}
f(x) & \mbox{ if } x \in \mS\\
\infty & \mbox{ if } x \in \mV \setminus \mS
\end{array}
\right.
$$
is a weakly $\mD$--convex function $F \colon \mV \to \bar{\R}$.
\end{definition}
The reader will notice that this terminology is inspired by \cite{BES63}, and that on $\mD$--convex subsets $\mS$ of $\mV$ there
is no difference between weak $\mD$--convexity and $\mD$--convexity of {\em real--valued} functions $f\colon \mS \to \R$.

\begin{lemma}\label{finiterc}
Let $\mV$ be a normed finite dimensional real vector space and $\mD$ a cone of directions in $\mV$. Assume 
$f \colon \mS \to [-\infty , \infty )$ is a $\mD$--convex function defined on an arbitrary subset $\mS$ of $\mV$. 
If $\mA$ is a connected component of the interior of $\mS$, then either $f \equiv -\infty$ on $\mA$ or $f > -\infty$ on $\mA$.
\end{lemma}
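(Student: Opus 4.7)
The plan is to split $\mA$ into the two pieces $A_- = \{ x \in \mA : f(x)=-\infty \}$ and $A_+ = \{ x \in \mA : f(x)>-\infty \}$, show that both are open in $\mA$, and then invoke connectedness of $\mA$ to conclude that one of them is empty. The underlying engine is a classical one-dimensional fact: a convex function $g\colon [a,b]\to [-\infty ,\infty )$ that takes the value $-\infty$ at an interior point of $[a,b]$ is identically $-\infty$ on the open segment $(a,b)$. (This follows from the convexity inequality $g((1-t)a+tb)\leq (1-t)g(a)+tg(b)$, using that $g$ is never $+\infty$.)

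Next I would choose, once and for all, a basis $e_{1},\dots ,e_{n}$ of $\mV$ contained in $\mD$; this is possible because $\mD$ spans $\mV$. Around any point $x_{0}\in \mA$ I would then pick $\delta >0$ so small that the closed coordinate box $\bar B=\{ x_{0}+\sum s_{i}e_{i} : |s_{i}|\leq \delta \}$ is contained in $\mA$, and I would prove the openness of $A_-$ and $A_+$ on the open box $B$.

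For $A_-$: assuming $f(x_{0})=-\infty$, an induction on $k$ shows that $f\equiv -\infty$ on the open slice $B_{k}=\{ x_{0}+\sum_{i=1}^{k}s_{i}e_{i}: |s_{i}|<\delta \}$. Indeed, at a point $y\in B_{k}$ with $f(y)=-\infty$, the closed segment $[y-\delta e_{k+1},y+\delta e_{k+1}]$ lies in $\bar B\subset \mS$, has $\mD$-direction (since $\mD$ is balanced and contains $e_{k+1}$), and has $y$ as an interior point; the one-dimensional lemma applied to $f$ restricted to this segment then yields $f\equiv -\infty$ on the whole open subsegment through $y$, which contains $\{ y+s e_{k+1}:|s|<\delta \}$. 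Taking $k=n$ gives $f\equiv -\infty$ on $B$, so $A_-$ is open.

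For $A_+$: assuming $f(x_{0})>-\infty$, I would argue by contradiction. If $f(z)=-\infty$ for some $z\in B$, write $z=x_{0}+\sum t_{i}e_{i}$ and kill the coordinates one at a time. For the first step, the closed segment from $x_{0}-\delta e_{1}+\sum _{i\geq 2}t_{i}e_{i}$ to $x_{0}+\delta e_{1}+\sum _{i\geq 2}t_{i}e_{i}$ lies in $\bar B$, is in direction $e_{1}\in \mD$, and contains $z$ in its interior (as $|t_{1}|<\delta$); the one-dimensional lemma then forces $f\equiv -\infty$ on the open subsegment, hence in particular $f(x_{0}+\sum _{i\geq 2}t_{i}e_{i})=-\infty$. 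Repeating this with the directions $e_{2},\dots ,e_{n}$ yields $f(x_{0})=-\infty$, a contradiction. Thus $A_{+}$ is open as well.

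Since $\mA$ is connected and is the disjoint union of the open sets $A_-$ and $A_+$, exactly one of them is empty, which is precisely the claim. The only delicate point — and the one the plan is really organized around — is that one cannot directly propagate the value $-\infty$ to the endpoints of a segment using convexity alone; the remedy, used in both halves above, is to always arrange that the problematic point lies in the \emph{interior} of a segment in a $\mD$-direction that stays inside $\mS$, which is exactly what the basis of $\mD$ together with the openness of $\mA$ delivers.
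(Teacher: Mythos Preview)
Your proof is correct and follows essentially the same approach as the paper: both rest on the one-dimensional fact that a convex function into $[-\infty,\infty)$ which is finite at one interior point is finite on the whole open interval, together with propagation along segments in the basis directions $e_1,\dots,e_n\in\mD$. The only organizational difference is that the paper argues globally---observing that any two points of $\mA$ are joined by a piecewise linear path in $\mA$ with segments in $\mD$-directions, and then propagating finiteness along such a path---whereas you work locally in a coordinate box and invoke connectedness via the open--open decomposition; your argument for $A_+$ being open is in fact a localized version of the paper's path argument, and your separate treatment of $A_-$ is redundant (once $A_+$ is open it is also closed, being the complement of $A_-$), though not incorrect.
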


\begin{proof}
The proof relies on two standard observations. First, if $h\colon (a,b) \subset \R \to [-\infty ,\infty ]$
is a convex function such that $h(t)<\infty$ for all $t \in (a,b)$ and $h(t_{0})>-\infty$ for some $t_{0} \in (a,b)$, then
$h(t) >-\infty$ for all $t \in (a,b)$. This follows easily from the definition of convexity.
The second observation is that, since $\mD$ spans $\mV$, any two points of $\mA$
can be connected by a piecewise linear path in $\mA$ whose segments are all in directions from $\mD$.
This follows by choosing a basis for $\mV$ consisting of vectors from $\mD$, then declaring it to be an orthonormal 
basis for $\mV$, so that the above is a well--known property of connected open sets in euclidean spaces.

Now if we have $x \in \mA$ with $f(x) > -\infty$ and $\ell$ is a line through $x$ in a direction from 
$\mD$, then by convexity of $f|_{\mA\cap \ell}$ we infer that $f(y) >-\infty$ for all $y$ belonging to the
connected component of $\mA\cap \ell$ containing $x$. The result follows since we can connect
any point $y \in \mA$ with $x$ by such line segments contained in $\mA$.
\end{proof}

The following result and its proof is a variant of a result from \cite{BKK00} (compare Lemma 2.2 there) that in turn is a slightly more
precise version of a well--known estimate, see \cite{Dac89} Theorem 2.31, but 
here being adapted to the case of a general cone $\mD$ of directions.  

\begin{lemma}\label{lipschitz}
Let $\mV$, $\| \cdot \|$ be a normed finite dimensional real vector space and $\mD$
a balanced cone whose linear span is $\mV$. 
If $f\colon B_{2r}(x_0 ) \to \R$ is  $\mD$-convex, then
$f$ is locally Lipschitz in $B_{2r}(x_{0})$. More precisely  we have
\begin{equation}\label{eqlip}
\abs{f(x)-f(y)} \leq L \| x-y \|
\end{equation}
for all $x$, $y \in B_{r}(x_0 )$, where 
$$
L=\frac{c_{0}}{r}\mathrm{osc}(f , B_{2r}(x_0 )) 
$$ 
and the constant $c_{0}$ depends only on the norm $\| \cdot \|$ and the cone $\mD$.
\end{lemma}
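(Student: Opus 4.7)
The plan is to reduce the statement to the elementary one-dimensional fact that a convex function $g\colon (a,b)\to\R$ satisfies $|g(t)-g(s)|\leq (\mathrm{osc}(g)/\alpha)\,|t-s|$ whenever $[s-\alpha,\,t+\alpha]\subset (a,b)$; this follows at once from the monotonicity of difference quotients of convex functions. Since $\mD$-convexity furnishes exactly such one-dimensional convexity along every line whose direction lies in $\mD$, the strategy is to connect any two points $x,y\in B_r(x_0)$ by a short polygonal path whose edges are parallel to a fixed basis of $\mV$ drawn from $\mD$, and then to chain the above 1-D bound along each edge.

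First, since $\mD$ is balanced and spans $\mV$, I would pick unit vectors $e_1,\ldots,e_d\in\mD$ forming a basis of $\mV$. By equivalence of norms in finite dimensions, there is a constant $\kappa=\kappa(\|\cdot\|,\mD)>0$ such that every $v\in\mV$ admits a decomposition $v=\sum_{j=1}^d \alpha_j e_j$ with $\sum_j|\alpha_j|\leq \kappa\|v\|$. Writing $M:=\mathrm{osc}(f,B_{2r}(x_0))$, I split into two cases depending on $\|y-x\|$. If $\|y-x\|\geq r/(8\kappa d)$, the crude bound $|f(x)-f(y)|\leq M$ already yields $|f(x)-f(y)|\leq 8\kappa d\,(M/r)\,\|y-x\|$. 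If instead $\|y-x\|<r/(8\kappa d)$, I decompose $y-x=\sum_j\alpha_j e_j$ with $\sum_j|\alpha_j|\leq r/(8d)$ and set $z_0:=x$ and $z_j:=z_{j-1}+\alpha_j e_j$, so that $z_d=y$; the triangle inequality gives $\|z_j-x_0\|\leq 9r/8$, and hence the line $\{z_{j-1}+te_j:t\in\R\}$ meets $B_{2r}(x_0)$ in an interval of $t$-values containing $(-7r/8,\,7r/8)$.

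On that line the map $t\mapsto f(z_{j-1}+te_j)$ is convex (by $\mD$-convexity, as $e_j\in\mD$) with oscillation at most $M$ on $(-7r/8,\,7r/8)$, and the endpoints $0$ and $\alpha_j$ both lie in $[-r/8,\,r/8]$, leaving margin $\alpha=r/2$ on either side. The one-dimensional estimate then gives $|f(z_j)-f(z_{j-1})|\leq (2M/r)\,|\alpha_j|$, and summing over $j$ together with $\sum_j|\alpha_j|\leq \kappa\|y-x\|$ produces $|f(x)-f(y)|\leq 2\kappa(M/r)\,\|y-x\|$. Combined with the far regime this proves the claim with $c_0:=\max(2\kappa,\,8\kappa d)$, which depends only on $\|\cdot\|$ and on $\mD$ (through the chosen basis). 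The main obstacle to watch out for is the geometric bookkeeping---keeping the polygonal path inside $B_{2r}(x_0)$ with enough slack on either side of each segment for the 1-D bound to apply---and splitting into the ``close'' and ``far'' regimes sidesteps the need to subdivide the path.
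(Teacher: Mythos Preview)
Your approach is essentially the paper's: fix a basis from $\mD$, connect nearby points by a polygonal path along basis directions, and apply the one-dimensional convex difference-quotient bound on each edge. Your close/far split is a clean alternative to the paper's Step~3 device (prove the bound on a tiny ball and then globalize using convexity of $B_r(x_0)$); both yield the quantitative estimate \eqref{eqlip} with the right dependence of $c_0$ on $\|\cdot\|$ and $\mD$.

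One point you skip is the paper's Step~1. The lemma also asserts that $f$ is locally Lipschitz on \emph{all} of $B_{2r}(x_0)$, and your bound is only informative when $M:=\mathrm{osc}(f,B_{2r}(x_0))<\infty$; nothing in the hypotheses rules out $M=\infty$ (already in one dimension a convex function on an open interval may blow up at the endpoints). The paper first proves local boundedness: on any coordinate parallelepiped $C\subset B_{2r}(x_0)$ the supremum of $f$ is attained at the vertices by iterated one-dimensional convexity, and the infimum is bounded below via a symmetric averaging trick. With that in hand, applying your estimate on any $B_{2\rho}(p)\subset B_{2r}(x_0)$ gives the missing qualitative claim.
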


\begin{proof}
For the entire proof we fix unit vectors $e_{1}, \, \dots \, , \, e_{n} \in \mD$  which form 
a basis of $\mV$, and note that due to the equivalence of all norms on $\R^n$ there
is a constant $c \in (1,\infty)$ such that 
\begin{equation}\label{eqnorm}
\frac{1}{c} \sum_{j=1}^n |t_j|\leq \| \sum_{j=1}^n t_j e_j \|  \leq c \sum_{j=1}^n |t_j| 
\end{equation}
for all $t_j\in \R$.
Now we proceed in three steps.

\noindent
{\it Step 1.}  The function $f$ is locally bounded.

We start by showing
that for $x \in B_{2r}(x_{0})$ and positive $\varepsilon $ such that the parallelepiped $C$ with center $x$ and sides
parallel to the basis vectors and all of length $2\varepsilon$ is contained in $B_{2r}(x_{0})$,
$$
C = \left\{ x+\sum_{j=1}^{n} t_{j}e_{j} : \, t_{1}, \, \dots \, , \, t_{n} \in [-\varepsilon , \varepsilon ] \right\} 
\subset B_{2r}(x_{0}),
$$
we have $\sup f(C) = \max f(K)$, where the set $K = \mathrm{ext}(C)$ consists of the vertices of 
the parallelepiped $C$.
Indeed, by convexity in each of the directions $e_j\in \mD$ we find recursively for each $x + \sum_{1}^{n}t_{j}e_{j} \in C$,
\begin{eqnarray*}
f(x+\sum_{j=1}^{n}t_{j}e_{j}) &\leq& \max_{\eps_{1}=\pm 1} f(x+\sum_{j=2}^{n}t_{j}e_{j}+\eps_{1}e_{1})\\
&\leq& \quad \dots\\
&\leq& \max f(K).
\end{eqnarray*}
Consequently, $\sup f(C) \leq \max f(K)$. In order also to get a lower bound we fix $y \in C$, say $y=x+\sum_{1}^{n}t_{j}e_{j}$
where each $|t_{j}| \leq \eps$. Put $K_{y} = \{ x+\sum_{1}^{n} \eps_{j}t_{j}e_{j} : \, \eps_{1}, \, \dots \, , \, \eps_{n} \in \{ -1,1 \} \}$.
Observe that $\sum_{z \in K_{y}} 2^{-n}z = x$ and $K_y$ is contained in $C$, so again using convexity in each of the directions $e_j$ we find
$$
f(x) \leq \sum_{z \in K_y} 2^{-n}f(z) \leq 2^{-n}f(y)+(1-2^{-n})\sup f(C),
$$
hence $f(y) \geq 2^{n}f(x)-(2^{n}-1)\max f(K)$, and therefore the lower bound 
$$
\inf f(C) \geq 2^{n}f(x)-(2^{n}-1) \max f(K)
$$
follows.

\noindent
{\it Step 2.} For all $x$, $y \in B_{r}(x_{0})$ with $x-y \in \mD$ we have $|f(x)-f(y)| \leq \tfrac{m}{r}\| x-y \|$, where
$m= \mathrm{osc}(f, B_{2r}(x_{0}))$. 

To prove this we can assume that $m < \infty$. Now fix $s \in (r,2r)$ and consider the line $L$ through $x$ and $y$. 
Select $z \in L \cap \partial B_{s}(x_{0})$ such that $x \in [y,z]$, the segment between $y$ and $z$. By assumption the 
restriction of $f$ to $L$ is convex and since $\| y-z \| \geq s-r$ we get
$$
\frac{f(x)-f(y)}{\| x-y \|} \leq \frac{f(z)-f(y)}{\| z-y \|} \leq \frac{m}{s-r}.
$$
The required conclusion follows since $s<2r$ was arbitrary and $x$ versus $y$ can be
swapped.

\noindent
{\it Step 3.}
Now consider any $x\in B_r(x_0)$ and $B_\eps(x)\subset B_r(x_0)$. If $\|x-y\|<\eps/c^2$ for
the constant $c$ from (\ref{eqnorm}), then 
$$y-x= \sum_{j=1}^n t_je_j \text{ where } \sum_{j=1}^n |t_j|< \frac{\eps}{c}.$$
Hence, the points
$$y_k=x+\sum_{j=1}^k t_j e_j, \quad k=1,\ldots,n$$
all belong to $B_{\eps}(x)$, and so in particular to $B_{r}(x_{0})$, since, again due to (\ref{eqnorm}),  
$$
\|y_k-x\|=\|\sum_{j=1}^k t_j e_j\|\leq c \sum_{j=1}^k |t_j|  <\eps .
$$
Consequently, step 2 and  (\ref{eqnorm}) ensure that 
$$|f(y)-f(x)|
=|f(y_n)-f(y_0)|\leq \sum_{j=1}^n |f(y_j)-f(y_{j-1})| 
\leq \sum_{j=1}^n \frac{m}{r} |t_j|\leq \frac{m}{r} c\|x-y\|.
$$
This shows that $f$ is everywhere inside $B_r(x_0)$ locally Lipschitz with a constant at most
$mc/r$. Because $B_r(x_0)$ is convex, the Lipschitz constant of $f$ on the entire ball can 
also not exceed $cm/r$.
 
Finally, we  observe that this argument also shows that $f$ is locally Lipschitz everywhere in  $B_{2r}(x_{0})$.
\end{proof}


An open convex cone $\mC$ in $\mV$ is an open subset $\mC$ of $\mV$ such that $x+ty \in \mC$ whenever $x$, $y \in \mC$ and $t>0$.

For a function $f\colon \mC \to \R$ defined on an open convex cone $\mC$ in $\mV$ we define the \textbf{(upper) recession function}
$f^{\infty}\colon \mC \to \bar{\R}$ as
\begin{equation}\label{urecess}
f^{\infty}(x) := \limsup_{\stackrel{t \to \infty , x^{\prime} \to x}{tx^{\prime} \in \mC}} \frac{f(tx^{\prime})}{t} \quad \quad (x \in \mC )
\end{equation}
The definition implies that $f^{\infty}$ is positively $1$-homogeneous: $f^{\infty}(tx) = tf^{\infty}(x)$ for all $x \in \mC$
and $t>0$. It also follows that when $f$ has linear growth on $\mC$, meaning that for some constant $c$ we have the bound
$$
\abs{f(x)} \leq c\bigl( \| x \| + 1 \bigr)
$$
for all $x \in \mC$, then $f^{\infty}$ is real--valued and $\abs{f^{\infty}(x)} \leq c\| x \|$ for all $x \in \mC^{\infty}$.
When $f$ is globally Lipschitz on $\mC$ with Lipschitz constant $\mathrm{lip}(f,\mC)=L$:
$$
\abs{f(x)-f(y)} \leq L \| x-y \| 
$$
for all $x$, $y \in \mC$, then we can simplify the definition of the recession function $f^{\infty}$ to
\begin{equation}\label{liprecess}
f^{\infty}(x) = \limsup_{t \to \infty} \frac{f(tx)}{t} \quad \quad (x \in \mC )
\end{equation}
and of course again $\mathrm{lip}(f^{\infty},\mC )\leq L$. Let us also remark that since all norms are equivalent on
a finite dimensional vector space and the actual Lipschitz constants play no role here we shall often just state that the function
under consideration has linear growth or is Lipschitz without specifying a norm. 

There is also a natural notion of lower recession function, but for
our purposes it is less useful. The reason is that the following result fails for lower recession functions.

\begin{lemma}\label{convrecess}
Let $\mC$ be an open convex cone in $\mV$ and assume that $f \colon \mC \to \R$ is a $\mD$--convex function of linear growth on $\mC$.
Then the recession function $f^{\infty}\colon \mC \to \R$ is again $\mD$--convex.
\end{lemma}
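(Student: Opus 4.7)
The plan is to use the fact that a positive rescaling of $f$ by a factor $1/t$ preserves the $\mD$-convexity inequalities, and then pass to the limit $t \to \infty$ using subadditivity of $\limsup$. The linear growth assumption only intervenes to ensure that $f^\infty$ is finite, so that no $\infty-\infty$ issues arise when adding limsups. Throughout, I shall use that $\mC$ is open and convex, and that $\mD$ is a balanced cone (so $t\mD \subseteq \mD$ for every $t\in\R$).

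Fix $x,y\in\mC$ with $x-y\in\mD$ and $\lambda\in(0,1)$, and put $z:=(1-\lambda)x+\lambda y\in\mC$. By the definition of $f^\infty(z)$ as an outer $\limsup$, I can choose sequences $t_n\to\infty$ and $z_n\to z$ with $t_nz_n\in\mC$ and
\[
\frac{f(t_nz_n)}{t_n}\longrightarrow f^\infty(z).
\]
I now build sequences tied to $x$ and $y$ by pure translation: set $x_n:=x+(z_n-z)$ and $y_n:=y+(z_n-z)$. Then $x_n\to x$, $y_n\to y$, $(1-\lambda)x_n+\lambda y_n=z_n$, and $x_n-y_n=x-y\in\mD$. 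For $n$ large, $x_n,y_n\in\mC$ because $\mC$ is open, and hence $t_nx_n,\,t_ny_n\in\mC$ since $\mC$ is a cone; their connecting segment also lies in $\mC$ by convexity of $\mC$. Finally $t_nx_n-t_ny_n=t_n(x-y)\in\mD$ because $\mD$ is balanced.

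The $\mD$-convexity of $f$ applied to $t_nx_n,\,t_ny_n$ therefore yields
\[
\frac{f(t_nz_n)}{t_n}\ \leq\ (1-\lambda)\,\frac{f(t_nx_n)}{t_n}+\lambda\,\frac{f(t_ny_n)}{t_n}.
\]
Taking $\limsup$ as $n\to\infty$, using subadditivity of $\limsup$ together with the elementary bound
\[
\limsup_n \frac{f(t_nx_n)}{t_n}\leq f^\infty(x),\qquad \limsup_n \frac{f(t_ny_n)}{t_n}\leq f^\infty(y),
\]
(which follows directly from the definition \eqref{urecess} since $x_n\to x$, $t_n\to\infty$, $t_nx_n\in\mC$, and similarly for $y$), I obtain
\[
f^\infty(z)\ \leq\ (1-\lambda)f^\infty(x)+\lambda f^\infty(y),
\]
which is exactly the $\mD$-convexity of $f^\infty$ on the segment $[x,y]$.

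The main subtlety — and the reason the analogous statement for the lower recession function $\liminf f(tx')/t$ fails — is the direction of subadditivity: one must extract a sequence realising the limsup at the midpoint $z$, and then obtain compatible sequences at the endpoints by translation so that the convex combination identity is preserved on the nose. Once this is set up, the $\limsup$ of a sum is controlled above by the sum of $\limsup$'s, which is precisely the side of the inequality required for convexity.
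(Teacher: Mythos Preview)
Your argument is correct. The paper itself omits the proof entirely (declaring it ``straightforward''), so there is nothing to compare against; your approach---realising the $\limsup$ at the convex combination $z$ and translating to obtain admissible sequences at the endpoints---is the natural one and almost certainly what the authors had in mind.
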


We omit the straightforward proof. The following simple observation turns out to be 
crucial in the sequel. 

\begin{lemma}\label{keybound} 
Let $\mV$ be a normed finite dimensional real vector space and $\mD$ a balanced cone of directions in $\mV$. 
Assume $\mC$ is an open convex cone in $\mV$ and that $f \colon \mC \to \R$ is a $\mD$--convex function. 
Then
\begin{equation}\label{keyboundeq}
f(x+y) \leq f^{\infty}(x)+f(y)
\end{equation}
for all $y \in \mC$ and $x \in \mC \cap \mD$. 
\end{lemma}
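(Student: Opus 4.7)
The plan is to exploit convexity along the single line from $y$ in the direction $x$, together with the very definition of the upper recession function $f^\infty$.

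First I would fix $x \in \mC \cap \mD$ and $y \in \mC$, and consider, for real parameters $t>1$, the segment between $y$ and $y+tx$. Since $\mC$ is an open convex cone containing both $x$ and $y$, we have $tx \in \mC$ for $t>0$, and $y+\lambda tx = y + \lambda(tx) \in \mC$ for every $\lambda \in [0,1]$; thus the whole segment lies in $\mC$. Moreover, it is parallel to $x$, which belongs to $\mD$, so $\mD$-convexity of $f$ applies along this segment.

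Next I would write the point $y+x$ as the convex combination
\[
y+x = \left(1-\tfrac{1}{t}\right) y + \tfrac{1}{t}\,(y+tx),
\]
and use $\mD$-convexity to obtain
\[
f(y+x) \leq \left(1-\tfrac{1}{t}\right) f(y) + \tfrac{1}{t}\, f(y+tx).
\]
This is the only place where $\mD$-convexity is used; the rest is a passage to the limit $t\to\infty$.

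Finally, I would rewrite $y+tx = t x_t$ with $x_t := x + y/t$, noting that $x_t \to x$ as $t\to\infty$ and that $tx_t = y+tx \in \mC$ for every $t>1$. Hence $(x_t,t)$ is an admissible pair in the definition \eqref{urecess} of $f^\infty(x)$, and therefore
\[
\limsup_{t\to\infty} \frac{f(y+tx)}{t} = \limsup_{t\to\infty} \frac{f(tx_t)}{t} \leq f^\infty(x).
\]
Taking $\limsup$ as $t\to\infty$ in the previous inequality gives $f(x+y) \leq f(y) + f^\infty(x)$, which is \eqref{keyboundeq}. There is no real obstacle here; the only point worth double-checking is that the limsup in the definition of $f^\infty$ really permits the specific sequence $(x_t,t)$ we use, which is immediate since $x_t\to x$ and $tx_t\in\mC$.
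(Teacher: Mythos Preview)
Your proof is correct and follows essentially the same route as the paper's: both use convexity of $f$ along the segment from $y$ to $y+tx$ to get $f(x+y)-f(y)\leq \tfrac{1}{t}\bigl(f(y+tx)-f(y)\bigr)$, then rewrite $f(y+tx)=f(t(x+y/t))$ and pass to the $\limsup$ via the definition of $f^\infty$. Your presentation just spells out the convex-combination step and the admissibility of $(x_t,t)$ slightly more explicitly.
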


\begin{proof}
Let $y \in \mC$ and $x \in \mC \cap \mD$. By definition of open convex cone, $y+tx \in \mC$ for all $t \geq 0$, and
because $f$ is convex in the direction of $x$ we get for all $t \geq 1$,
$$
f(x+y)-f(y) \leq \frac{f(y+tx)-f(y)}{t}= \frac{f(t(x+\frac{y}{t}))}{t}-\frac{f(y)}{t},
$$
and in view of (\ref{urecess}) the conclusion follows by sending $t$ to infinity.
\end{proof}



\section{Proof of Theorem \ref{convex} } \label{easyfinish}

Throughout this section we assume that $\mV$ is a normed finite dimensional real vector space with
a cone of directions $\mD$.

\begin{definition} \label{defscone}
Let $\mC$ be a subset of $\mV$ and $f\colon \mC\to \R$ a function.
We say that $f$ has at $x \in \mC$
\begin{itemize}
\item
a \textbf{subdifferential}  if there is a linear function $\ell\colon \mV\to \R$
such that 
$$ f(y)\geq f(x)+\ell(y-x) \quad \text{ for all } y\in \mC.$$
\item
a \textbf{$\mD$--subcone} if there is a $\mD$--convex and positively $1$--homogeneous function
$\ell\colon \mV\to \R$ such that 
$$ 
f(y)\geq f(x)+\ell(y-x) \quad \text{ for all } y\in \mC.
$$
\end{itemize}
\end{definition}
As we shall see momentarily the two conditions are in fact equivalent. However, first we establish
the existence of $\mD$--subcones:

\begin{lemma} \label{existscone}
Let $\mC$ be a convex open cone in $\mV$. 

If $f\colon\mC\to \R$ is $\mD$--convex and positively $1$--homogeneous, then for any 
$x\in \mC\cap \mD$, $y\in \mC$ and $\lambda\in (0,1)$ we have 
\begin{equation}\label{monot}
f(y)-f(x)\geq \frac{f(x+\lambda(y-x))-f(x)}{\lambda}.
\end{equation} 
In particular, $f$ has a $\mD$--subcone at $x$.
\end{lemma}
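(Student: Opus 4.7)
The plan is to derive the monotonicity inequality (\ref{monot}) directly from Lemma \ref{keybound} by exploiting that positive $1$--homogeneity forces $f^{\infty} \equiv f$ on $\mC$, and then to construct the $\mD$--subcone as the one--sided directional derivative of $f$ at $x$.

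For (\ref{monot}), the key algebraic manipulation is to factor the midpoint: $(1-\lambda)x + \lambda y = \lambda\bigl(y + sx\bigr)$ with $s := (1-\lambda)/\lambda > 0$. Positive $1$--homogeneity then yields
\[
f\bigl(x + \lambda(y-x)\bigr) = f\bigl((1-\lambda)x + \lambda y\bigr) = \lambda f(y + sx).
\]
Since $\mC$ is an open convex cone and $\mD$ is balanced, $sx \in \mC \cap \mD$. Lemma \ref{lipschitz} ensures $f$ is continuous, and the defining limit (\ref{urecess}) together with $f(tx') = tf(x')$ then gives $f^{\infty} \equiv f$ on $\mC$; in particular $f^{\infty}(sx) = sf(x)$. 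Invoking Lemma \ref{keybound} with $sx$ and $y$ yields $f(y + sx) \leq sf(x) + f(y)$, so
\[
f\bigl(x + \lambda(y-x)\bigr) \leq \lambda s f(x) + \lambda f(y) = (1-\lambda)f(x) + \lambda f(y),
\]
which is (\ref{monot}) after subtracting $f(x)$ and dividing by $\lambda$.

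For the $\mD$--subcone I would set
\[
\ell(v) := \lim_{\lambda \downarrow 0} \phi_{\lambda}(v), \qquad \phi_{\lambda}(v) := \frac{f(x + \lambda v) - f(x)}{\lambda}, \qquad v \in \mV,
\]
where $\phi_{\lambda}(v)$ is defined for all sufficiently small $\lambda > 0$ thanks to the openness of $\mC$ at $x$. A standard rescaling of (\ref{monot})---apply it first to $y$ with parameter $\lambda_{2}$, then to $y' := x + \lambda_{2}(y-x) \in \mC$ with parameter $\lambda_{1}/\lambda_{2}$---shows that $\lambda \mapsto \phi_{\lambda}(v)$ is monotone nondecreasing in $\lambda$, so the limit exists, and Lemma \ref{lipschitz} supplies a local Lipschitz bound on $f$ near $x$ yielding $|\ell(v)| \leq L\|v\|$ and hence the finiteness of $\ell$ on $\mV$. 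Positive $1$--homogeneity of $\ell$ is immediate by a change of variable, and evaluating the monotone limit at $\lambda = 1$ for $v = y - x$ gives $\ell(y-x) \leq f(y) - f(x)$ for every $y \in \mC$, which is the required subcone inequality.

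The $\mD$--convexity of $\ell$ then comes for free: for each fixed $\lambda > 0$ the function $\phi_{\lambda}$ is $\mD$--convex on its domain (being an affine rescaling of $f$, and $\mD$ is balanced), and a pointwise limit of $\mD$--convex functions is $\mD$--convex, since the restriction to any line in a direction of $\mD$ becomes a pointwise limit of convex real--valued functions on an interval. The main obstacle is conceptual rather than technical: one has to recognise that for a positively $1$--homogeneous function, Lemma \ref{keybound} is precisely the subadditivity $f(y + sx) \leq sf(x) + f(y)$, and that (\ref{monot}) is merely its reparametrisation; once this is in hand, the construction of $\ell$ is routine bookkeeping with monotone difference quotients.
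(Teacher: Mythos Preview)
Your proof is correct and follows essentially the same route as the paper. The derivation of (\ref{monot}) is identical in substance: both arguments rewrite $x+\lambda(y-x)=\lambda\bigl(y+\tfrac{1-\lambda}{\lambda}x\bigr)$ and feed this into Lemma~\ref{keybound} together with $f^{\infty}=f$; the paper merely normalises $f(x)=0$ first, which you avoid. Your construction of the $\mD$--subcone via $\phi_{\lambda}(v)=\lambda^{-1}\bigl(f(x+\lambda v)-f(x)\bigr)$ is the paper's $g_{s}(y)=s\,f(x+y/s)$ under the reparametrisation $s=1/\lambda$ (again modulo the normalisation), with the same monotonicity, Lipschitz bound, homogeneity and pointwise--limit arguments.
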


\begin{proof}
Since adding a linear function to $f$ does not effect the assumptions nor the validity of 
(\ref{monot}) or the existence of a $\mD$--subcone, we can suppose in the sequel that 
$f(x)=0$. By Lemma \ref{lipschitz} the function $f$ is lipschitz near $x$ and hence also $f^\infty(x)=0$. 
Therefore, using Lemma \ref{keybound} we conclude as required
\begin{align*}
f(y)-f(x) & = f(y)= f(x+(y-x))\\
&\geq f(\frac{x}{\lambda} + (y-x))=\frac{f({x} +\lambda (y-x))-f(x)}{\lambda}.
\end{align*}

To see that this implies the existence of a $\mD$--subcone at $x$ we choose an $\eps>0$ so 
$B_{2\eps}(x)\subset \mC$ and define for $s \geq 1$,
$$
g_s(y) := sf(x+\frac{y}{s}), \quad y \in B_{s\eps}(0).
$$
Clearly $g_{s}(0)=0$. By Lemma \ref{lipschitz} we have $\lip(g_s)=\lip(f,B_\eps(x))$ and by
(\ref{monot}) we get for $s \leq t$ the monotonicity property $g_{s}(y) \geq g_{t}(y)$ for all $y \in B_{s\eps}(0)$.
Hence for each $y\in \mV$ the limit
$$
g(y) := \lim_{s\to\infty} g_s(y)=\inf_{s>0} g_{s}(y)
$$
exists in $\R$, and defines a Lipschitz function $g \colon \mV \to \R$. As a pointwise limit of $\mD$--convex
functions $g$ is $\mD$--convex, and since for any $y\in \mV$ and $\lambda >0$,
$$ 
g(\lambda y)=\lim_{s\to \infty} s f\left(x+\frac{\lambda y}{s}\right)=
 \lambda  \lim_{s\to \infty} \left(\frac{s}{\lambda}\right)  f\left(x+\frac{y}{s/\lambda}\right)=
\lambda g(y),
$$
$g$ is also positively $1$--homogeneous. Finally, for $y \in \mC$ we get from (\ref{monot}) upon taking
any $\lambda \geq \eps/(1+\| y-x \|)$ that $\lambda (y-x) \in B_{\eps}(0)$ so
$$
f(y)-f(x)\geq \frac{1}{\lambda}f\left(x+\lambda(y-x)\right)=
\frac{1}{\lambda} g_1(\lambda(y-x)).
$$
Since $g_1 \geq g$ and $g$ is positively one--homogeneous it follows that $g$ is a $\mD$--subcone for 
$f$ at $x$.
\end{proof}

\begin{proposition}\label{simple}
Let $\mC$ be an open subset of $\mV$. 
If the function $f\colon\mC\to \R$  has a $\mD$--subcone at the point $x_{0} \in \mC$, then
it also has a subdifferential at $x_{0}$.
\end{proposition}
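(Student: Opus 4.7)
Let $\ell \colon \mV \to \R$ denote the $\mD$-convex, positively $1$-homogeneous $\mD$-subcone of $f$ at $x_{0}$, so that $\ell(y-x_{0}) \leq f(y)-f(x_{0})$ on $\mC$. To furnish the subdifferential it is enough to exhibit a linear $L\colon \mV \to \R$ with $L \leq \ell$ on $\mV$, since then $L(y-x_{0}) \leq \ell(y-x_{0}) \leq f(y)-f(x_{0})$ on $\mC$ automatically. First I would record the qualitative regularity of $\ell$: Lemma~\ref{lipschitz} applies to $\ell$ on $\mV$ itself and yields that $\ell$ is globally Lipschitz on $\mV$, which combined with $1$-homogeneity gives the two-sided linear bound $|\ell(v)| \leq M\|v\|$.

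The core of the plan is to pass to the positively $1$-homogeneous sublinear envelope
\[
g(v) \;:=\; \inf\Bigl\{ \textstyle\sum_{i=1}^N \ell(v_i) \,:\, N \in \N,\ v_i \in \mV,\ \sum_{i=1}^N v_i = v \Bigr\}.
\]
By construction $g \leq \ell$, $g$ is positively $1$-homogeneous (via rescaling) and subadditive (via concatenation of decompositions), and hence sublinear wherever it is finite. If one can show $g(0) = 0$, then $g$ is a proper sublinear functional on $\mV$, and the standard Hahn--Banach theorem supplies a linear $L$ with $L \leq g \leq \ell$, finishing the proof. The remaining content is therefore to establish the key inequality
\[
\textstyle\sum_{i=1}^N w_i = 0 \;\Longrightarrow\; \sum_{i=1}^N \ell(w_i) \geq 0
\]
for any finite configuration $(w_i) \subset \mV$; equivalently, that the point $(0,-1)$ does not lie in the convex hull of $\epi(\ell)$ in $\mV \oplus \R$.

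The pairwise case $w_2 = -w_1 \in \mD$ is immediate: the convexity of $\ell$ along the $\mD$-line $\R w_1$, combined with the explicit formula $\ell(tw_1) = t\,\ell(w_1)$ for $t>0$ and $\ell(tw_1)=-t\,\ell(-w_1)$ for $t<0$ coming from $1$-homogeneity, forces the slope condition $\ell(w_1)+\ell(-w_1)\geq 0$. The real obstacle is extending this from $\mD$-pairs to arbitrary cancelling collections in $\mV$. My plan would combine two ingredients: Lemma~\ref{keybound} applied to $\ell$ itself (whose recession $\ell^{\infty}$ coincides with $\ell$ by $1$-homogeneity) yields the asymmetric subadditivity $\ell(x+y) \leq \ell(x)+\ell(y)$ whenever $x \in \mD$; and the spanning property of $\mD$ together with its balancedness lets one decompose any $w_i \in \mV$ as a finite sum of elements of $\mD$. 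Iterating the asymmetric subadditivity along such decompositions and collecting cancellations in $\mD$-pairs is my intended route to the key inequality, and this combinatorial extension is where I expect the main technical work to lie.

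Once the key inequality is verified, $g$ is a finite sublinear function on all of $\mV$, and a Hahn--Banach supporting functional of $g$ at $0$ produces the sought linear minorant $L$. The resulting $L$ then satisfies $L(y-x_{0}) \leq \ell(y-x_{0}) \leq f(y)-f(x_{0})$ for every $y \in \mC$, which is exactly the subdifferential property of $f$ at $x_{0}$.
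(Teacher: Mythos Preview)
Your reduction is correct: once the $\mD$--subcone $\ell$ admits a linear minorant $L$ on all of $\mV$, the map $L$ is a subdifferential of $f$ at $x_0$. And the route via the sublinear envelope $g$ and Hahn--Banach is a legitimate reformulation: establishing $g(0)\geq 0$ is exactly what is needed.

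But there is a genuine gap precisely at the step you yourself flag as ``where the main technical work lies''. Your tool---the asymmetric subadditivity $\ell(d+y)\leq \ell(d)+\ell(y)$ for $d\in\mD$ coming from Lemma~\ref{keybound}---works perfectly when \emph{all} the $w_i$ already lie in $\mD$: iterating it one term at a time yields $\sum_i\ell(w_i)\geq \ell\bigl(\sum_i w_i\bigr)=\ell(0)=0$. For general $w_i\in\mV$, however, decomposing each $w_i$ into $\mD$--pieces $d_{ij}$ via a basis gives the \emph{upper} bound $\ell(w_i)\leq\sum_j\ell(d_{ij})$, which is the wrong direction for the lower bound you need on $\sum_i\ell(w_i)$. ``Collecting cancellations in $\mD$--pairs'' after such a decomposition therefore only bounds a quantity that dominates the one you care about. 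There is no evident combinatorial repair.

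Worse, the key inequality you isolate---$\sum w_i=0\Rightarrow\sum\ell(w_i)\geq 0$---is, by your own Hahn--Banach step, \emph{equivalent} to the existence of a linear minorant of $\ell$, which is exactly the content of the proposition applied to $\ell$ itself at $0\in\mV$. So the reformulation, while valid, does not reduce the difficulty: a further idea specific to $\mD$--convexity plus $1$--homogeneity is still required.

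The paper supplies that idea by induction on $n=\dim\mV$. One picks $e_1\in\mD$, uses Lemma~\ref{existscone} (itself based on Lemma~\ref{keybound}) to produce a $\mD$--subcone $g$ of $\ell$ at the \emph{nonzero} point $e_1$, restricts $g$ to the hyperplane $\tilde{\mV}=\spn\{e_2,\dots,e_n\}$ with $e_j\in\mD$, and applies the inductive hypothesis there to get a linear $\tilde{\ell}\leq g$ on $\tilde{\mV}$. The candidate $L(te_1+y)=t\,\ell(e_1)+\tilde{\ell}(y)$ is then verified to minorize $\ell$ on all of $\mV$, with one further appeal to Lemma~\ref{keybound} to handle the half--space $t\leq 0$. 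It is this dimension--reduction step, not a combinatorial rearrangement, that carries the weight.
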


\begin{proof} We will proceed by induction on the dimension $n\geq 1$ of $\mV$ and suppose that
the statement is true whenever the dimension of the vector space is less than $n$.

A moments reflection  on  Definition \ref{defscone} shows that existence both of a $\mD$--subcone 
and a subdifferential are unaffected by a simultaneous shift of the function $f$ and the 
contact point $x_0$, so we can without loss in generality suppose that $x_{0}=0_\mV$ and, replacing $f$ 
by its $\mD$--subcone, also that $f$ is $\mD$-convex and positively one-homogeneous on all of $\mV$. 

We  choose a basis $e_1,\ldots,e_n \in \mD$ of $\mV$ and put $x=e_1$ and  
$\tilde{\mV}=\text{span}\{e_2,\ldots, e_n\}$, so $\tilde{\mV}$ is spanned by $\tilde{\mD}=
\tilde{\mV}\cap \mD$.  According to Lemma \ref{existscone} there is a
$\mD$--subcone $g$  for $f$ in $x$, and clearly the restriction $g_{| \tilde{\mV}}$ is its own 
$\tilde{\mD}$--subcone at $0_{\tilde{\mV}}$. Therefore, by the induction assumption there is a 
subdifferential at the origin: a linear function $\tilde{l} \colon \tilde{\mV}\to\R$ such that 
$\tilde{\ell}(y)\leq g(y)$ whenever $y\in \tilde{\mV}$. In particular $\tilde{\ell}=0$ if 
$n=1$. 

Now we claim that
$$
\ell(tx+y)=tf(x)+\tilde{\ell}(y) \quad \text{ for } y\in \tilde{\mV} \text{ and } t\in \R
$$
is a subdifferential for $f$ at $0_\mV$. Once this claim is shown, our proof is finished.

For this purpose we first note that cleary $\ell$ is linear on $\mV$. Since $f(0)=0$, we need
only show that $f(z)\geq \ell(z)$ for each $z=tx+y$, $t\in \R$ and $y\in\tilde{\mV}$.
But if $t=1$ then we have for all $y\in \tilde{\mV}$ that 
$$\ell(x+y)=f(x)+\ell(y)\leq f(x)+g(y)\leq f(x+y),$$ 
according to the definition of a $\mD$--subcone $g$ of $f$ in $x$. 

By positive $1$--homogeneity of $f$  we get now for all $t>0, y\in\tilde{\mV}$ that
$$\ell(tx+y) =t\ell(x+\frac{y}{t})\leq t f(x+\frac{y}{t})=f(tx+y).$$
Finally, if $t\leq 0$, $y \in \tilde{\mV}$ we use Lemma \ref{lipschitz}  to infer
$f^\infty(x)=f(x)$ and now Lemma \ref{keybound} gives
$$
\ell(x+y)\leq f(x+y)\leq f^\infty((1-t)x) + f(tx+y)
$$
and so
$$
f(tx+y)\geq \ell(x+y)-(1-t)f(x)=\ell(tx+y),
$$
which finishes the proof.
\end{proof}

Clearly, Theorem \ref{convex} is now a direct consequence of Proposition \ref{simple}
and  Lemma \ref{existscone}.

\section{Higher order derivatives}\label{calculus}

We briefly recall some notation and concepts, mainly from multi--linear
algebra, that will  prove convenient for dealing with higher order 
derivatives. 

Starting from the standard $\Lscr (X,Y)=\{f\colon X\to Y\,;\, f \text{ linear}\}$ for 
given finite--dimensional real vector spaces $X,Y$, we set $\Lscr^0(X,Y)=Y$ and inductively 
$$
\Lscr^{k+1}(X,Y)=\Lscr (X,\Lscr^k(X,Y))
$$
for $k \in \N_0$. As usual (see \cite{fed69}, 1.9--1.10), we identify $\Lscr^k(X,Y)$ with $\Mscr^k(X,Y)$, the space of $Y$--valued $k$--linear 
maps on $X$. In the sequel, we are mainly interested in the space of all $Y$--valued symmetric $k$--linear maps
$$
\odot^k(X,Y)=\{\mu \in \Mscr^k(X;Y) \,: \, \mu (x_1,\ldots,x_k)= \mu (x_{\sigma(1)},\ldots,
x_{\sigma(k)}) \text{ for all }\sigma \in \text{Sym}(k)\},
$$
where $\text{Sym}(k)$ is the group of all permutations of the set 
$\{1,\ldots,k\}$. When $Y=\R$ we simply write $\odot^k X$ instead of $\odot^{k}(X, \R )$.

In terms of a basis $(e_j )_{j=1}^{n}$ for $X$, we can express a map $\xi \in \Mscr^k(X,Y)$  
as a $Y$--valued homogeneous polynomial of degree $k$ in $kn$ real variables by
{\em fully expanding all brackets}:
$$
\xi (x_1,\ldots,x_k)= \sum_{i_1=1}^n \ldots \sum_{i_k=1}^n \xi_{i_1,\ldots,i_k}
x_1^{i_1} \cdot \ldots \cdot x_k^{i_k},
$$
where 
$$
\xi_{i_1,\ldots,i_k}=\xi (e_{i_1},\ldots,e_{i_k}) \in Y \text{ and } x_j=\sum_{i=1}^n x_j^i e_i.
$$
In particular, observe that $\xi \in \Mscr^k(X,Y)$ is symmetric if and only if for one basis
 $( e_j )_{j=1}^{n}$ of $X$ (and then for all) $\xi_{i_1,\ldots,i_k}=
\xi_{i_{\sigma(1)}, \ldots, i_{\sigma(k)}}$ for all 
$\sigma \in \text{Sym}(k)$ and all indices $i_j\in \{1,\ldots,n\}$. Obviously, when $\mathrm{dim}Y=N$
we have $\mathrm{dim} \odot^{k}(X,Y) = N\binom{k+n-1}{k}$.

In view of the standard definition of the (Fr\'{e}chet--)derivative of a 
$\CC^k$ mapping $f\colon \Omega \subset X\to Y$ and Schwarz' theorem on interchangeability
of partial derivatives we have for all $x \in \Omega$ that $D^k f(x)\in \odot^k(X,Y)$.
 
The following calculations are of particular interest to us: If $\mu \in \odot^k(X,Y)$ and $f_\mu(x)=\mu (x,\ldots,x)$ 
for $x \in X$, then 
\begin{align*}
Df_\mu(x)(h) &= \lim_{t\to 0} \frac{1}{t} 
\biggl( \mu (x+th,\ldots,x+th) - \mu (x,\ldots,x) \biggr) \nonumber \\
&=\mu (h,x,x,\ldots,x)+ \mu (x,h,x,\ldots,x) + \ldots + \mu (x,\ldots,x,h) 
\nonumber \\
&=k \mu (h,x,\ldots,x). 
\end{align*}
Iterating this, we obtain 
$$
D^l f_\mu (x)(h_1,\ldots,h_l) = \frac{k!}{(k-l)!} 
\mu (h_1,h_2,\ldots,h_k,\underbrace {x,\ldots,x}_ {(k-l)-\text{times}}),
$$
and finally
\begin{equation}\label{eins}
D^k f_\mu (x)(h_1,\ldots,h_k) = k! \mu (h_1,h_2,\ldots,h_k)
\end{equation}   
for all $x$, $h_{1}, \, \dots \, , \, h_{k} \in X$.
In particular, the symmetric $k$--linear map generating a homogeneous 
$k$--th order polynomial $f$ is unique, the converse representation (of any 
such polynomial by a symmetric $k$--linear map) is a consequence of Taylor's Theorem. 
Thus, we see that $\odot^k(X,Y)$ is precisely the space of all $k$--th derivatives 
of $Y$--valued functions defined on (open subsets $\Omega$ of) $X$.  

Now we focus, for the sake of notational simplicity,  
on the only slightly more special situation of $X=\R^n$ with usual scalar product $\langle \cdot , \cdot \rangle$ or, 
equivalently, any finite dimensional inner product space. Analogous to before, we find that for all $\CC^k$ functions 
$g\colon \R \to \R$ and each $a\in \Rn$ the function 
$f(x)=g(\langle x,a\rangle)$ on $\Rn$ satisfies
$$
D^k f(x)(h_1, \ldots, h_k) = g^{(k)}(\langle x,a\rangle) \prod_{i=1}^k \langle a,
h_i\rangle,
$$
or, in more convenient tensor notation,
\begin{equation}
\label{zwei}
D^k f(x) = g^{(k)}(\langle x, a \rangle) a^{\otimes k}, 
\end{equation}
where we identify $a\in \Rn$ with $y \mapsto \langle y,a\rangle$ 
in $(\Rn)^*$ and accordingly
$$
a^{\otimes k}= \underbrace{a \otimes \ldots \otimes a}_k
$$
with the symmetric $k$--linear form 
$a^{\otimes k}(h_1 , \, \ldots \, , \, h_k ) = \prod_{i=1}^{k} \langle a,h_i \rangle$.
In particular we record that its coordinates with respect to any orthonormal basis
 $(e_j )_{j=1}^{n}$ for $\Rn$ are
$$
(a^{\otimes k})_{i_{1}\dots i_{k}} = \prod_{l=1}^{k}a_{i_l}, \quad \mbox{ where } 
\quad a_{i_l} = \langle a,e_{i_l} \rangle .
$$
Next, we assert that the balanced cone
$$
\mD_s (n,k) := \{t\cdot a^{\otimes k} \, : \, t\in \R \text{ and } a \in \Rn \}
$$
spans $\odot^k(\R^n)$. 
Indeed, otherwise we could find $\mu \in \odot^k(\arn)\setminus\{0\}$ which is perpendicular to all of 
$\mD_s (n,k)$  with respect to the canonical scalar product on $\odot \R^n$ extended from $\Rn$
$$
\langle \xi, \zeta \rangle =\sum_{i_1=1}^n \ldots \sum_{i_k=1}^n
\xi_{i_1\ldots i_k} \zeta_{i_1\ldots i_k},
$$
where $\xi_{i_{1} \ldots i_{k}},\zeta_{i_{1} \ldots i_{k}}$ are the coordinates with respect to the orthonormal basis
obtained from $(e_j )_{j=1}^{n}$. But observing that 
$$
\langle \mu ,a^{\otimes k} \rangle = 
\sum_{i_1=1}^n \ldots \sum_{i_k=1}^n
\mu_{i_1\ldots i_k} a_{i_1}\cdot \ldots \cdot a_{i_k}= \mu (a,\ldots,a),
$$
we have $f_\mu \equiv 0$ on $\arn$ and hence by (\ref{eins})
the contradiction $\mu=0$.  
As an easy consequence we see that, using standard notation, also
the balanced cone 
$$
\mD_s= \mD_s(n,k;Y)=
\{b\otimes \underbrace{a \otimes \ldots \otimes a}_k = b\otimes a^{\otimes k} 
\, ; \, a \in \arn \text{ and } b \in Y\}
$$ 
spans  $\odot^k(\arn ,Y)$. Similar observations 
can also be found in the Appendix A.1  of \cite{SanZap}.

Finally, we want to clarify why this cone 
$\mD_s(n,k ,Y)$ plays  
in the case of $k$--th order derivatives the role which the  usual cone of rank--one 
directions has for first order derivatives.
So let on a bounded smooth domain $\Omega$ of $\Rn$ an integrable map $f\colon \Omega \to Y$ be given and
assume that its distributional $k$--th derivative satisfies 
$$
D^k f(x) \in \{\xi , \eta \}\text{ a.e., where }\xi ,\eta \in \odot^k(\arn,Y).
$$ 
Then $g := D^{k-1}  f\colon \Omega \to \odot^{k-1}(\arn ,Y)$ is easily seen to belong to 
$\WW^{1,\infty}(\Omega , \odot^{k-1}(\Rn , Y)))$ and therefore is lipschitz with  $Dg \in \{ \xi,\eta \}$ a.e., where, 
by Rademacher's theorem, the derivative can be understood both distributionally and 
classically a.e. A by now standard argument (see \cite{BJ}) yields that
 $\xi$, $\eta$ as elements of $\Lscr (\arn,\odot^{k-1}(\arn,Y))$ satisfy $\text{rank}(\xi -\eta )\leq 1$. In
other words, when $\xi \neq \eta$ the kernel of $\xi -\eta$ must be $(n-1)$--dimensional so for a unit vector $\nu \in \R^{n}$ 
we have $(\xi -\eta )(x)= (\xi - \eta )(\langle x, \nu\rangle \nu)$ as elements of $\odot^{k-1}(\Rn ,Y)$ for all $x \in \Rn$. 
By the symmetry of $\xi -\eta$ we therefore get for all $x=x_1\in \Rn, x_2,\ldots ,x_k \in \Rn$
\begin{eqnarray*}
(\xi -\eta )(x_1,x_2,\ldots,x_k) &=& [(\xi -\eta )(x)](x_2,\ldots,x_k)=
(\xi -\eta )(\langle x_1,\nu \rangle \nu,x_2,\ldots, x_k )\\
&=&(\xi -\eta )(\langle x_1,\nu \rangle \nu,\langle x_2,\nu \rangle \nu,\ldots,
\langle x_k,\nu \rangle \nu)\\
&=& (\xi -\eta )(\nu,\nu,\ldots,\nu) \prod_{i=1}^k \langle x_i,\nu\rangle.
\end{eqnarray*} 
In other words, then necessarily $\xi -\eta  = [(\xi -\eta )(\nu,\nu,\ldots,\nu)] \otimes
\nu^{\otimes k}$. Conversely (\ref{zwei}) shows that for each 
$\mu \in \mD_s(n,k;Y)$ there is $f \in \CC^{k-1,1}(\arn,Y)$ such that
$D^k f \in \{0,\mu\}$ a.e., and so $D^k f$ is not essentially constant.

Next we present a result that corresponds to the gradient distribution changing 
techniques used in the theory of first order partial differential
inclusions. Since we are not forced to find exact solutions, as is usually done
in this theory, we avoid many technical difficulties and we can work exclusively within the class of
$\CC^\infty$--maps. First let us recall the Leibniz rule in several variables.
Let $\Omega$ be an open subset of $X$.
If $f\colon \Omega \to Y$ and $g\colon \Omega\to Z$ are $\CC^k$--maps
and $\Psi \colon Y\times Z\to V$ is bilinear, then for
each $x\in\Omega$   and $v=(v_1,\ldots, v_k)\in X^k$ we have 
\begin{equation}\label{leib}
D^k \biggl( y\mapsto \Psi(f(y), g(y)) \biggr) (x)(v)=\sum_{M\subset\{1,\ldots,k\}}
\Psi(\tilde{\nabla}^{M}f(x)(v) ,\tilde{\nabla}^{\{1,\ldots,k\}\setminus M}
g(x)(v)),
\end{equation}
where we denote $\tilde{\nabla}^M \varphi (x)(v) := D^l \varphi (x)(v_{i_1},\ldots,v_{i_l})$
if $M= \{i_1,\ldots,i_l\} \subset\{1,\ldots,k\}$ has cardinality $l$ (well--defined due to the 
symmetry of the higher order derivatives).
The rule can be easily shown by induction on $k$. 

\begin{proposition} \label{split}
Let $\Omega$ be a bounded open subset of $\R^n$, $k\in \N$ and 
$\xi$, $\eta \in \odot^k(\arn,Y)$ with $\xi -\eta \in \mD_s(n,k,Y)$ be given (we express this by saying that  
$\xi$, $\eta$ are {\bf rank-one connected}. Suppose   $\l \in (0,1)$ and let $u \colon \Omega\to Y$  satisfy
$D^k u(x)=C=\l \xi + (1-\l) \eta$ for all $x \in \Omega$ (so $u$ is in particular a polynomial of degree $k$). 
Then for each $\eps>0$ there are a compactly supported $\CC^\infty$ map $\varphi \colon \Omega \to Y$ and 
open subsets $\Omega_\xi,\Omega_\eta$ of \ $\Omega$ such that
\begin{itemize}
\item[(i)]  $D^k(u+\varphi)(x)=\xi$ if $x\in \Omega_\xi$,
               $D^k(u+\varphi)(x)=\eta $ if $x\in \Omega_\eta$ and \\
               $\text{\rm dist}(D^k(u+\varphi)(x),[\xi ,\eta ])<\eps$ for all 
               $x \in \Omega$, where $[\xi ,\eta ]=\{t \xi + (1-t)\eta \,:\, t\in [0,1]\}$.
\item[(ii)] $|\Omega_\xi |>(1-\eps)\l |\Omega |$ and 
               $|\Omega_\eta |>(1-\eps)(1-\l)|\Omega|$,
\item[(iii)]$| D^l \varphi (x)|<\eps$ for all $x \in \Omega$ and $l<k$.
\end{itemize}
\end{proposition}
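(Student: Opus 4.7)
The plan is the standard one-dimensional ``laminate'' construction, adapted to order $k$ via formula (\ref{zwei}). Since $\xi-\eta=b\otimes a^{\otimes k}$ for some $a\in\R^n$, $b\in Y$, and $D^k\bigl(x\mapsto g(\langle x,a\rangle)b\bigr)(x)=g^{(k)}(\langle x,a\rangle)\,b\otimes a^{\otimes k}$, we will build $\varphi$ from a single scalar function depending only on the variable $\langle x,a\rangle$, then cut it off with a smooth bump to achieve compact support.

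First I would construct a $1$-periodic smooth function $h\colon\R\to\R$ whose $k$-th derivative $p:=h^{(k)}$ takes the constant value $1-\l$ on a subset of $[0,1)$ of measure at least $\l-\delta$, the constant value $-\l$ on a subset of measure at least $(1-\l)-\delta$, and lies in the interval $[-\l,1-\l]$ on a transition set of measure at most $2\delta$. Since with these choices $\int_0^1 p=0$, iterated antiderivatives can be made $1$-periodic by subtracting constants, so a smooth periodic $h$ exists with $\|h\|_{C^k}\le C(h)<\infty$. Setting
$$
\psi_M(x):=\tfrac{1}{M^k}\,h(M\langle x,a\rangle)\,b,
$$
formula (\ref{zwei}) gives $D^l\psi_M(x)=M^{l-k}h^{(l)}(M\langle x,a\rangle)\,b\otimes a^{\otimes l}$ for $0\le l\le k$, so $|D^l\psi_M|=O(M^{l-k})$ for $l<k$, while
$$
D^k(u+\psi_M)(x)=C+p(M\langle x,a\rangle)(\xi-\eta),
$$
which equals exactly $\xi$ where $p=1-\l$ and exactly $\eta$ where $p=-\l$, and lies in $[\xi,\eta]$ elsewhere.

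Next I would pick a cutoff $\chi\in\CC^\infty_c(\Omega;[0,1])$ with $\chi\equiv 1$ on an open $\Omega'\subset\Omega$ satisfying $|\Omega\setminus\Omega'|<\delta|\Omega|$, and define $\varphi:=\chi\psi_M$. Applying the Leibniz rule (\ref{leib}), all derivatives $D^l\varphi$ with $l<k$ are a sum of products $D^j\chi\cdot D^{l-j}\psi_M$ with $l-j\le l<k$, each bounded by $C_\chi M^{l-k}\|h\|_{C^k}$; analogously the terms $D^j\chi\cdot D^{k-j}\psi_M$ with $j\ge1$ contribute only $O(M^{-1})$ to $D^k\varphi$, and on $\Omega'$ these terms vanish outright, so there $D^k(u+\varphi)=D^k(u+\psi_M)$. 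Define
$$
\Omega_\xi:=\{x\in\Omega':\,p(M\langle x,a\rangle)=1-\l\},\qquad
\Omega_\eta:=\{x\in\Omega':\,p(M\langle x,a\rangle)=-\l\},
$$
both open. On $\Omega_\xi$ we have $D^k(u+\varphi)=\xi$ and on $\Omega_\eta$ we have $D^k(u+\varphi)=\eta$, while globally $\dist(D^k(u+\varphi)(x),[\xi,\eta])\le O(M^{-1})+|\xi-\eta|\cdot\mathbf 1_{\{0<\chi<1\}}\cdot(\text{values in }[-\l,1-\l])$, and in fact the chain of values $C+\chi(x)p(M\langle x,a\rangle)(\xi-\eta)$ lies in $[\eta,\xi]$, so once $M$ is large the distance condition in (i) holds.

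The last step is a Fubini/equidistribution argument for the measure bounds in (ii): since $t\mapsto \mathbf 1_{\{p=1-\l\}}(t)$ is a bounded $1$-periodic function with mean $\ge\l-\delta$, the functions $x\mapsto \mathbf 1_{\{p=1-\l\}}(M\langle x,a\rangle)$ converge weakly-$*$ in $\LL^\infty(\Omega)$ to this mean as $M\to\infty$, so $|\Omega_\xi|\to|\Omega'|\cdot|\{p=1-\l\}\cap[0,1)|\ge(1-\delta)(\l-\delta)|\Omega|$, and similarly for $|\Omega_\eta|$. Choosing first $\delta>0$ small enough and then $M$ large enough gives (i), (ii) and (iii) simultaneously. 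The only genuinely delicate point is the construction of $h$ so that $h^{(k)}$ really takes the two specified values exactly on large sets (needed because (i) demands $D^k(u+\varphi)$ equals $\xi$ or $\eta$ pointwise, not just approximately); all other estimates are routine consequences of the scaling $\psi_M\sim M^{-k}$ and the Leibniz rule.
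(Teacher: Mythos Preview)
Your proof is correct and follows essentially the same approach as the paper: a rescaled one-dimensional laminate in the direction $a$, multiplied by a smooth cutoff, with Leibniz-rule estimates for the cross terms and a Riemann--Lebesgue/equidistribution argument for the measure bounds in (ii). The only notable variation is that you arrange $h$ itself to be $1$-periodic (so all $h^{(l)}$ are uniformly bounded), whereas the paper integrates a periodic profile $k$ times to obtain an $H$ with merely polynomial growth and uses estimate (\ref{h:growth}); your choice is slightly cleaner but otherwise equivalent.
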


\begin{proof} 
We start by choosing a $1$-periodic $\CC^\infty$-function $h\colon \R \to 
[\lambda-1,\lambda]$ with $\int_0^1 h=0$ and for which there
exist intervals $I_\xi ,I_\eta \subset (0,1)$ such that  $h=\l-1$ on $I_\xi$, $h=\l$
on $I_\eta$ and $|I_\xi |>(1-\eps/2)\l$, $|I_\eta |>(1-\eps/2) (1-\l)$. For instance,
we could use a (sufficiently 'fine') mollification of the $1$--periodic
extension of the function $(\l-1)\ONE_{(0,\l)}+\l \ONE_{(\l,1)}$. Now for each $l \in \{ 0, \ldots ,k \}$
we define recursively $\CC^\infty$ functions $h_l$ by 
$$
h_0=h \text{ and } h_l(0)=0, h'_l=h_{l-1} \text{ if }l\geq 1.
$$
Setting $H=h_k$, we notice that $h_1$ is again $1$--periodic and, by induction,
\begin{equation}\label{h:growth} 
|H^{(l)}(t)|=|h_{k-l}(t)|\leq \frac{\|h_1\|_\infty}{(k-l-1)!}|t|^{k-l-1} 
\end{equation}
for $0\leq l <k$ and $t\in \R$.
For later reference we also define $\hat{\chi}_\xi$  to be the $1$--periodic
function that equals $\ONE_{I_\xi}-|I_\xi |$ on $[0,1)$ and 
similarly for $\hat{\chi}_\eta$. Since these functions integrate to zero over
each interval of length $1$, we see that their 
indefinite integrals (and hence also distributional 
primitive functions) $F_\xi$, $F_\eta$ are bounded. 
Hence, partial integration gives for each smooth test function $\varphi \in \CC^{\infty}_{c}(\R )$ that
$$
\lim_{j \to \infty} \big| \int_\R \hat{\chi}_\xi (jt)\varphi(t)\, \dd t\big|
=\lim_{j \to \infty} \big| \int_\R F_\xi (jt) \frac{\varphi^{\prime}(t)}{j} \, \dd t \big|=0,
$$
and hence
\begin{equation}\label{osc:weak}
\tilde{\chi}_{\xi ,j} \tows | I_\xi | \quad \text{ and } \quad  
\tilde{\chi}_{\eta ,j} \tows | I_\eta | \quad \text{ in } \LL^{1}( \R )^* 
\end{equation}
where $\tilde{\chi}_{\xi ,j} (t)=\ONE_{(I_\xi +\mathbb{Z})}(jt)$ and 
$\tilde{\chi}_{\eta ,j} (t)=\ONE_{(I_\eta +\mathbb{Z})}(jt)$. 

Since $\xi$ and $\eta$ are rank--one connected, we find $b\in Y$ and 
$a\in \mathbb{S}^{n-1}$ such that $\eta -\xi =b\otimes a^{\otimes k}$. Without loss of generality, we can assume 
that $b\neq 0$. 
Fix a $\CC^\infty$ function $\psi \colon \arn \to [0,1]$ compactly 
supported in $\Omega$ such that the set $\Omega_+=\{x\in \Omega\,:\, \psi (x)=1\}$ satisfies 
\begin{equation}
\label{big:psi}
|\Omega_+|>(1-\eps/2) |\Omega |.
\end{equation}
Defining now 
$$
\varphi_j(x)=b\psi(x)  j^{-k} H(\langle x,ja \rangle), x \in \arn, \, j \in \N ,
$$
we claim that for $j\in \mathbb{N}$ sufficiently large the map $\varphi=
\varphi_j$ is the required mapping. 
The first, and crucial observation is that (\ref{zwei}) and (\ref{leib}) imply
that for $x \in \Omega$,
\begin{eqnarray*}
\| D^k \varphi_j (x)&-&b\otimes a^{\otimes k} \psi(x) H^{(k)}(\langle x,ja\rangle)\|\\
&& = \Big\| \sum_{l=1}^k \sum_{\card A\,=l} b \otimes 
\big(\tilde{\nabla}^A \psi(x) \otimes j^{-l}  H^{(k-l)} 
(j \langle x,a \rangle) a^{\otimes (\{1,\dots,k\}\setminus A)}\big)\Big\|  \\
&& \stackrel{(\ref{h:growth})}{\leq} \|b\|{ \sum_{l=1}^k}
c_{\psi,h_1,l}  j^{-l} (j|x|)^{l-1}
\leq \|b\|c'_{\psi,h_1,k,\diam(\Omega)} j^{-1} .
\end{eqnarray*}
Recalling that $H^{(k)}=h$ and that $\mu +b\otimes a^{\otimes k}\psi(x)h(\langle x,ja \rangle)\in [\xi ,\eta ]$ 
for all $x$ we see that the last inequality in (i) is 
satisfied provided $j>  \|b\|c'_{\psi,h_1,k,\Omega} /\eps$. 
Since $D \psi\equiv 0$ on
$\Omega_+$ we conclude for the very same reason  that the first two 
statements in (i) hold true for 
$$
\Omega_{\xi ,j}=\{x\in\Omega_+ \,:\, \langle jx,a \rangle \in I_\xi +\mathbb{Z}\}, 
$$ 
and
$$
\Omega_{\eta ,j}=\{x\in\Omega_+ \,:\, \langle jx,a \rangle \in I_\eta +\mathbb{Z}\}.
$$ 
Similarly, (\ref{leib}), (\ref{zwei}) and
(\ref{h:growth}) imply $\max_{x\in \Omega} |D^l\varphi_j (x)|\leq 
c''_{l,\psi,\text{diam}(\Omega)} j^{-1}$ if $l<k$ and so (iii) follows. 

Finally, to establish (ii) usually a disjoint decomposition of (a large part of)
$\Omega_+$ into cubes with a side parallel to $a$ is considered. However, for variety we prefer to give 
an analytic argument.  Denoting  
$s(t)=\mathcal{H}^{n-1}(\{x\in \Omega_+\,:\, \langle x,a\rangle=t\})$, we 
infer from Fubini's theorem and (\ref{osc:weak}) that
\begin{eqnarray*}
\lim_{j \to \infty} |\Omega_{\xi ,j}|=\lim_{j \to \infty}\int \! s(t) 
\tilde{\chi}_{\xi ,j}(t) \dd t &=& \int \! s(t) \dd t |I_\xi |=|I_\xi | |\Omega_+|\\
&>& (1-\frac{\eps}{2})^2 \l |\Omega|\\
&>& (1-\eps)\l|\Omega|,
\end{eqnarray*}
and similarly $\lim_{j \to \infty} |\Omega_{\eta ,j}|>(1-\eps)(1-\l)|\Omega|$.
So, choosing $j$ sufficiently large, also (ii) holds and the proof is finished.
\end{proof}

After these preparations we show how  functionals with the semiconvexity properties 
discussed in the previous sections naturally arise when relaxing energy functionals. This phenomenon is 
well known in the Calculus of Variations, but we could not find a concise treatment of the higher 
order derivative case that was based on compactly supported test maps in the literature. The preference
has been for periodic test maps, see for instance \cite{BCO81} and \cite{FM}, and while it is possible to
derive our results in that context too we found it useful and worthwhile to here record the approach 
based on compactly supported test maps. We therefore provide a
detailed exposition for the convenience of the reader. 

Recall that a subset $\mS$ of $\mV$ is $\mD$--convex if for all $x$, $y \in \mS$ such 
that $x-y \in \mD$ we have that the segment $[x,y]$ is contained in $\mS$.

\begin{theorem}\label{relaxform}
Let  $\mS$ be an open and $\mD_s(n,k;Y)$--convex subset of
$\mV=\odot^k(\arn,Y)$ and let the extended real--valued function $F\colon \mS\to [-\infty,\infty)$ be 
locally bounded above and Borel measurable.  

Then the relaxation of $F$ defined as 
$$
\F (\xi ) := \inf \left\{ \int_{(0,1)^{n}} \! F(\xi +D^{k}\varphi (x)) \, \dd x : 
\, 
\begin{array}{l}
\varphi \in \CC^{\infty}_{c} ((0,1)^{n} , Y)\\
\xi+D^{k}\varphi (x) \in \mS \mbox{ for all } x
\end{array} 
\right\}
$$
is a $\mD_s (n,k,Y)$--convex function $\F \colon \mS \to [-\infty , \infty )$. 

Moreover, for any strictly increasing function $\omega \colon [0,\infty ) \to [0,1]$ with
$\omega (0)=0$ and $t/\omega (t) \to 0$ as $t \searrow 0$ and any $\eps>0$ we can without changing the 
value of $\F$  restrict the infimum in its definiton to be taken only over  
those $\varphi$ which in addition satisfy $|D^l \varphi(x)|<\eps$ for all $x$ and $l<k$, and
$\| D^{k-1} \varphi (x)-D^{k-1} \varphi (y) \| \leq \varepsilon \omega ( |x-y| )$ for all $x$, $y$. 
\end{theorem}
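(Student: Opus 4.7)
The plan is to prove the $\mD_s(n,k;Y)$-convexity of $\F$ by a lamination-plus-scaling construction based on Proposition~\ref{split}, and then to obtain the restricted-infimum claim by a periodic tiling argument that preserves $\int F$ while shrinking the lower-order derivatives of any admissible test map.

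For the convexity, I fix $\xi,\eta\in\mS$ with $\xi-\eta\in\mD_s(n,k;Y)$ and $\lambda\in(0,1)$, set $\xi_0=\lambda\xi+(1-\lambda)\eta$, and note that the $\mD_s$-convexity of $\mS$ gives $[\xi,\eta]\subset\mS$. Given $\sigma>0$ I pick near-optimal $\varphi_\xi,\varphi_\eta\in\CC^\infty_c((0,1)^n,Y)$ with $\int F(\xi+D^k\varphi_\xi)\dd x\leq\F(\xi)+\sigma$ (arbitrarily negative if $\F(\xi)=-\infty$) and analogously for $\eta$. With $u$ any polynomial of degree $k$ such that $D^ku\equiv\xi_0$, Proposition~\ref{split} applied on $\Omega=(0,1)^n$ produces $\varphi_0\in\CC^\infty_c((0,1)^n,Y)$ and open sets $\Omega_\xi,\Omega_\eta\subset(0,1)^n$ on which $D^k(u+\varphi_0)$ equals $\xi$ and $\eta$ respectively, with $|\Omega_\xi|,|\Omega_\eta|$ as close to $\lambda,1-\lambda$ as desired, and with $D^k(u+\varphi_0)$ trapped in an $\eps$-neighbourhood of $[\xi,\eta]$ everywhere. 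I then exhaust $\Omega_\xi$ and $\Omega_\eta$ up to small measure by disjoint open sub-cubes $Q_i=x_i+\epsilon_i(0,1)^n$ and add in the rescaled copies $\epsilon_i^k\varphi_\xi((\cdot-x_i)/\epsilon_i)$ on the $\Omega_\xi$-cubes, analogously with $\varphi_\eta$ on the $\Omega_\eta$-cubes. The resulting $\Phi\in\CC^\infty_c((0,1)^n,Y)$ satisfies $\xi_0+D^k\Phi=\xi+D^k\varphi_\xi((x-x_i)/\epsilon_i)$ on each $Q_i\subset\Omega_\xi$ (because $\xi_0+D^k\varphi_0=\xi$ there), analogously on the $\Omega_\eta$-cubes, equals $\xi$ or $\eta$ on the leftover parts of $\Omega_\xi\cup\Omega_\eta$, and stays in an $\eps$-neighbourhood of $[\xi,\eta]\Subset\mS$ on the transition set $(0,1)^n\setminus(\Omega_\xi\cup\Omega_\eta)$. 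A change of variables on each $Q_i$, together with an upper bound $M$ for $F$ on a neighbourhood of $[\xi,\eta]$, yields
\[
\int_{(0,1)^n}F(\xi_0+D^k\Phi)\dd x\leq\lambda\F(\xi)+(1-\lambda)\F(\eta)+C(\eps+\sigma),
\]
and sending $\sigma,\eps\to 0$ gives $\F(\xi_0)\leq\lambda\F(\xi)+(1-\lambda)\F(\eta)$.

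For the restricted infimum, I fix any admissible $\varphi$ and any $\eps>0$. For $\delta=1/N$ with $N\in\N$ I tile $(0,1)^n$ by the $N^n$ disjoint sub-cubes $Q_i=x_i+\delta(0,1)^n$ and define $\tilde\varphi(x)=\delta^k\varphi((x-x_i)/\delta)$ on $Q_i$. Since $\varphi$ has compact support in $(0,1)^n$, the map $\tilde\varphi$ lies in $\CC^\infty_c((0,1)^n,Y)$ and a change of variables yields $\int F(\xi+D^k\tilde\varphi)\dd x=\int F(\xi+D^k\varphi)\dd x$. From $D^l\tilde\varphi(x)=\delta^{k-l}(D^l\varphi)((x-x_i)/\delta)$ I read off $\|D^l\tilde\varphi\|_\infty\leq\delta^{k-l}\|D^l\varphi\|_\infty<\eps$ for all $l<k$ once $\delta$ is small enough; globally $\|D^k\tilde\varphi\|_\infty=\|D^k\varphi\|_\infty=:L$, so $D^{k-1}\tilde\varphi$ is $L$-Lipschitz on $(0,1)^n$, while $\|D^{k-1}\tilde\varphi\|_\infty\leq\delta\|D^{k-1}\varphi\|_\infty$. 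The Hölder bound then follows by splitting the regime $|x-y|\leq r_0$ (use $L$-Lipschitzness and $t/\omega(t)\to 0$ to bound $L|x-y|\leq\eps\omega(|x-y|)$) from $|x-y|>r_0$ (use the $C^0$-bound $2\delta\|D^{k-1}\varphi\|_\infty\leq\eps\omega(r_0)$ and monotonicity of $\omega$), choosing first $r_0$ then $\delta$ small.

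The main technical obstacle is controlling the transition region in the first step: admissibility of $\Phi$ there relies on Proposition~\ref{split}(i) trapping $D^k(u+\varphi_0)$ inside an $\eps$-neighbourhood of the compact set $[\xi,\eta]\Subset\mS$, and the smallness of its contribution to $\int F$ depends critically on the local-upper-boundedness hypothesis on $F$. The remaining ingredients---Borel measurability of $F(\xi_0+D^k\Phi)$, well-definedness of the integral in $[-\infty,\infty)$, and the $\F(\xi)=-\infty$ case---are routine.
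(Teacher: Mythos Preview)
Your proposal is correct and follows essentially the same approach as the paper: for $\mD_s$-convexity you use Proposition~\ref{split} to laminate between $\xi$ and $\eta$, then fill the plateau regions $\Omega_\xi,\Omega_\eta$ with rescaled copies of near-optimal test maps $\varphi_\xi,\varphi_\eta$ on sub-cubes, exactly as the paper does; for the restricted-infimum claim you use the same periodic tiling $\tilde\varphi(x)=\delta^k\varphi((x-x_i)/\delta)$ that preserves $\int F(\xi+D^k\cdot)$ while shrinking lower-order derivatives.

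The only notable deviation is in the H\"older estimate for $D^{k-1}\tilde\varphi$. The paper first replaces $\omega$ by its concave envelope (so that $t/\omega(t)$ is nondecreasing) and then splits according to whether $x,y$ lie in the same dyadic sub-cube or not, exploiting that $\tilde\varphi$ vanishes near sub-cube boundaries. Your argument instead splits by a distance threshold $r_0$: global $L$-Lipschitzness of $D^{k-1}\tilde\varphi$ (from $\|D^k\tilde\varphi\|_\infty=L$ on the convex domain $(0,1)^n$) handles $|x-y|\le r_0$ via $t/\omega(t)\to 0$, and the uniform smallness $\|D^{k-1}\tilde\varphi\|_\infty\le\delta\|D^{k-1}\varphi\|_\infty$ handles $|x-y|>r_0$ via monotonicity of $\omega$. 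This is a clean variant that sidesteps the concavity reduction; both arguments are valid and of comparable length.
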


\begin{proof} 
We start by checking that the infimum is unchanged if we restrict the test maps $\varphi$ as described.
Since we can replace $\omega$ by its concave envelope we can without loss of generality assume that
$\omega$ in addition is concave.
Hence fix $\omega$, $\eps$ as in the statement and let $\varphi \in \CC^{\infty}_{c} ((0,1)^{n} , Y)$ with 
$\xi+D^{k}\varphi (x) \in \mS$ for all $x$. We then take $j \in \N$ such that 
$$
2^{-j} \max_{0 \leq l < k} \| D^l \varphi \|_{\LL^{\infty}} <\eps \quad \mbox{ and } \quad 
4\| D^{k}\varphi \|_{\LL^{\infty}}\frac{\sqrt{n}2^{-j}}{\omega (\sqrt{n}2^{-j})} < \eps .
$$
Extend $\varphi$ to all of $\Rn$ by $(0,1)^n$ periodicity and define for $x \in (0,1)^n$,
$\psi (x) = 2^{-jk}\phi (2^{j}x)$. Then $\psi$ is an admissible test map for $\F$ and it is clear
that
$$
\int_{(0,1)^n} \! F(\xi + D^{k}\psi (x)) \dd x = \int_{(0,1)^n} \! F(\xi + D^{k}\varphi (x)) \dd x 
\quad \mbox{ and } \quad \max_{0 \leq l < k} \| D^l \psi \|_{\LL^{\infty}} <\eps .
$$
To check the $\omega$--H\"{o}lder continuity we let $x$, $y \in (0,1)^n$. Divide the cube $[0,1]^n$ into
$2^{jn}$ non--overlapping dyadic subcubes each of side length $2^{-j}$. If $x$, $y$ belong to the same dyadic 
subcube, say they both belong to $2^{-j}z+[0,2^{-j}]^{n}$, where $z \in \mathbb{Z}^n$, then clearly
\begin{eqnarray*}
\| D^{k-1}\psi (x) - D^{k-1}\psi (y) \| &=& 2^{-j}\| D^{k-1}\varphi (2^{j}x-z)-D^{k-1}\varphi (2^{j}y-z) \|\\
&\leq& \| D^{k}\varphi \|_{\LL^\infty}|x-y|\\
&\leq& \| D^{k}\varphi \|_{\LL^\infty}\frac{\sqrt{n}2^{-j}}{\omega (\sqrt{n}2^{-j})}\omega (|x-y|)\\
&\leq& \varepsilon \omega (|x-y|) .
\end{eqnarray*}
If $x$, $y$ belong to distinct dyadic subcubes, say $D_x$, $D_y$, then take $\bar{x} \in (x,y) \cap D_x$, 
$\bar{y} \in (x,y) \cap D_y$ and note that $|x-y| \geq |x-\bar{x}|+|y-\bar{y}|$ and $|x-\bar{x}|$, $|y-\bar{y}| \leq \sqrt{n}2^{-j}$.
Using that $\psi$ must vanish in small neighbourhoods of $\bar{x}$, $\bar{y}$ we estimate as before
\begin{eqnarray*}
\| D^{k-1}\psi (x) - D^{k-1}\psi (y) \| &\leq& 2^{-j} \| D^{k}\varphi \|_{\LL^\infty} \biggl( 2^{j}|x-\bar{x}| + 2^{j}|y-\bar{y}| \biggr)\\
&\leq& 2\| D^{k}\varphi \|_{\LL^\infty} \frac{\sqrt{n}2^{-j}}{\omega (\sqrt{n}2^{-j})}\biggl( \omega (|x-\bar{x}|) +\omega (|y-\bar{y}|) \biggr)\\
&\leq& \eps \omega ( |x-y| ) .
\end{eqnarray*}
Next we turn to the $\mD$--convexity. Because $F<\infty$ also $\F < \infty$ and it therefore suffices to show that 
$\F(\mu )\leq \l \F(\xi)+(1-\l)\F(\eta )$ if $\xi$, $\eta \in \mS$ with $\eta -\xi \in\mD$, $\l \in (0,1)$  and $\mu=\l \xi + (1-\l) \eta$.  
Fix real numbers $s$, $t$ so $s>\F(\xi )$ and $t>\F(\eta )$ and by the definition of relaxation find maps $\varphi_\xi$, 
$\varphi_\eta \in \CC^{\infty}_{c}((0,1)^{n},Y)$ such that $\xi+D^{k}\varphi_{\xi}(x) \in \mS$, $\eta+D^{k}\varphi_{\eta}(x) \in \mS$
for all $x$, and
\begin{equation}\label{sest}
\int_{(0,1)^{n}} \! F(\xi +D^{k}\varphi_\xi (x)) \dd x<s
\end{equation}
and
\begin{equation}\label{sedm}
\int_{(0,1)^{n}} \! F(\eta +D^{k}\varphi_\eta (x)) \dd x<t.
\end{equation}
Given $\delta >0$ we find $\eps\in (0,\delta )$ such that
$$
\sup\{ F(\zeta ) : \, \text{dist}( \zeta ,[\xi ,\eta ])<\eps\}\eps<\delta ,
$$
and $\zeta \in \mS$ if $\text{dist}(\zeta ,[A,B])<\eps$. Next,  
we set 
$u(x)= \mu (x,x,\ldots,x)/k!$ and choose $\phi$ satisfying the conditions (i), (ii) and (iii) of Proposition 
\ref{split}  for $\Omega=(0,1)^n$, in particular so that 
$\Omega_\xi =\text{int}\{ x \in \Omega : \, D^k(u+ \phi)(x) = \xi \}$ and $\Omega_\eta =
\text{int}\{ x \in \Omega : \, D^k (u+\phi)(x)=\eta \}$ satisfy 
$\Lcal^{n} \biggl(\Omega\setminus(\Omega_\xi \cup \Omega_\eta )\biggr)  < \eps \Lcal^{n} (\Omega )$.
Clearly, we find an $l\in \mathbb{N}$ such that the family $\q_\xi$ of all 
dyadic cubes of sidelength $2^{-l}$ entirely contained in $\Omega_\xi$ satisfies
$\Lcal^{n} (\bigcup \q_{\xi} ) > (1-\varepsilon )\Lcal^{n}(\Omega_\xi )$ and similarly
$\Lcal^{n}\biggl( \bigcup \q_\eta \biggr) >(1-\varepsilon )\Lcal^{n} (\Omega_\eta )$. Hence,
$$
\Omega_r=\Omega\setminus\big(\bigcup \q_\xi
\cup \bigcup \q_\eta \big)
$$
fulfills  $\Lcal^{n} (\Omega_r )<\eps$ and by Proposition \ref{split}(i), 
$\text{dist}(D^k (u+\phi)(x),[\xi ,\eta ])<\eps$ for  $x\in\Omega$. Therefore, 
$$
\int_{\Omega_r} \! F(D^k(u+\phi))(x) \dd x < \delta.
$$ 
Finally, we define similarly to the first part of the proof the function 
$$
\psi(x)=\begin{cases} (u+\phi)(x) &\text{if } x\in \Omega_r,\\
                                        (u+\phi)(x)+ 2^{-kl}\varphi_\xi (2^l (x-y)) 
						&\text{if } x \in y+[0,2^{-l}]^n\in \q_\xi ,\\
(u+\phi)(x)+ 2^{-kl}\varphi_\eta (2^l (x-y)) 
						&\text{if } x \in y+[0,2^{-l}]^n\in \q_\eta .
\end{cases}
$$
As $D^k \psi(x)=\xi +D^k \varphi_\xi (2^l(x-y))$ in the interior of 
$y+[0,2^{-l}]^n\subset \q_\xi$ we have for each such cube that 
$$
\int_{y+[0,2^{-l}]^n} \! F(D^k \psi(x))\dd x< s \Lcal^{n} \biggl( y+[0,2^{-l}]^n \biggr) ,
$$
see (\ref{sest}). Similarly, (\ref{sedm}) implies 
$$
\int_{y+[0,2^{-l}]^n} \! F(D^k \psi(x))\dd x< t \Lcal^{n} \biggl( y+[0,2^{-l}]^n \biggr) 
$$
for cubes  $y+[0,2^{-l}]^n \subset \q_\eta$.
Altogether, in view of Proposition \ref{split} (ii),
\begin{eqnarray*}
\int_{\Omega} \! F(D^k \psi) &\leq& \delta + s \Lcal^{n} \biggl( \bigcup \q_\xi \biggr) +t \Lcal^{n} \biggl( \bigcup \q_\eta \biggr)\\
&\leq&\delta + s \Lcal^{n} (\Omega_\xi ) + t\Lcal^{n} (\Omega_\eta ) \\
&\leq& \delta + s(1-\Lcal^{n}( \Omega_\eta )) + t (1-\Lcal^{n} (\Omega_\xi ))\\ 
&\leq& \delta + s(1-(1-\l)(1-\eps))+ t (1-\l(1-\eps))\\
&\leq& s\l+ t(1-\l) + (\delta + \eps(s(1-\l)+t\l ))\\
&\leq& s\l+ t(1-\l)+ \delta (1+s+t).
\end{eqnarray*}
It is also easy see that $\psi - u \in \CC^\infty_c ((0,1)^n,Y)$ because 
it is a finite sum of such functions (including $\phi$). 
Since $\delta>0$ was arbitrary, the proof is complete.
\end{proof}

\section{A generalization of Ornstein's $\LL^1$ Non--Inequality}\label{ornstein}

As was mentioned in the introduction Theorem \ref{convex} implies the Ornstein non--inequality in the
context of $x$--dependent vector valued operators, and even a version involving certain nonlinear $x$--dependent 
differential expressions: 

\begin{theorem}\label{nonlin}
Let $F \colon \Rn \times \odot^{k}(\Rn , \RN ) \to \R$ be a Carath\'{e}odory integrand
satisfying
$$
F(x,t\xi ) = |t|F(x,\xi ) 
$$
and
$$
|F(x,\xi )| \leq a(x)|\xi | 
$$
for almost all $x \in \Rn$, all $t \in \R$ and $\xi \in \odot^{k}(\Rn , \RN )$, where
$a \in \LL^{1}_{\rm loc}(\Rn )$ is a given function.
Then 
\begin{equation}\label{nonlinorn}
\int_{\Rn} \! F(x,D^{k} \phi (x)) \dd x \geq 0
\end{equation}
holds for all $\phi \in \CC^{\infty}_{c}(\Rn , \RN )$ if and only if $F(x,\xi ) \geq 0$ for almost 
all $x \in \Rn$ and all $\xi \in \odot^{k}(\Rn , \RN )$.
\end{theorem}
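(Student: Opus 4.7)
The ``if'' direction is trivial. For the converse assume (\ref{nonlinorn}); my plan is to first reduce this global assumption to the pointwise assertion that
\begin{equation}\label{pwisenn}
\int_{(0,1)^n} F(x_0,D^k\psi(y))\dd y \geq 0 \quad \text{for every } \psi\in\CC^\infty_c((0,1)^n,\RN)
\end{equation}
holds at almost every $x_0$, and then to apply Theorem \ref{relaxform} and Theorem \ref{convex} to the map $\xi\mapsto F(x_0,\xi)$.

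For the reduction, given $\psi$, $x_0\in\Rn$ and $r>0$, the rescaling $\phi(x):=r^k\psi((x-x_0)/r)$ lies in $\CC^\infty_c(\Rn,\RN)$, and a change of variables turns (\ref{nonlinorn}) into $\int_{(0,1)^n} F(x_0+ry,D^k\psi(y))\dd y\geq 0$. Multiplying by an arbitrary nonnegative $\chi\in\CC^\infty_c(\Rn)$, integrating in $x_0$, and using Fubini yields $\int_{(0,1)^n}\int_{\Rn} \chi(z-ry)F(z,D^k\psi(y))\dd z\dd y\geq 0$. The growth bound $\abs{F(\cdot,\xi)}\leq a\abs{\xi}$ with $a\in\LL^1_{\rm loc}$ combined with the uniform continuity of $\chi$ permits dominated convergence as $r\to 0$, giving $\int_{\Rn}\chi(z) G_\psi(z)\dd z\geq 0$ with $G_\psi(z):=\int_{(0,1)^n} F(z,D^k\psi(y))\dd y$. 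Since $\chi\geq 0$ is arbitrary, $G_\psi\geq 0$ a.e., and a countable density argument over $\psi$, combined with continuity of $F(z,\cdot)$ and the growth bound, produces a null set $N$ outside of which (\ref{pwisenn}) holds.

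Now fix $x_0\notin N$ and set $g(\xi):=F(x_0,\xi)$; it is continuous, positively $1$-homogeneous, even, and $g(0)=0$. Apply Theorem \ref{relaxform} with $\mS=\mV=\odot^k(\Rn,\RN)$ to obtain a $\mD_s(n,k,\RN)$-convex relaxation $\F\colon\mV\to[-\infty,\infty)$. From $\F\leq g$ and (\ref{pwisenn}) we deduce $\F(0)=0$; straightforward rescalings in the defining infimum show that $\F$ inherits positive $1$-homogeneity (via $\phi\mapsto\phi/t$) and evenness (via $\phi\mapsto -\phi$ together with evenness of $g$). Since $\F(0)=0>-\infty$, Lemma \ref{finiterc} upgrades $\F$ to a real-valued function on $\mV$.

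Finally, Theorem \ref{convex} applies to $\F$ with $\mC=\mV$ and cone of directions $\mD_s(n,k,\RN)$ at the point $0\in\mD_s$, producing a linear $\ell\colon\mV\to\R$ with $\ell(0)=\F(0)=0$ and $\F(\xi)\geq \ell(\xi)$ for all $\xi$. Evaluating this bound at $\xi$ and $-\xi$ and using $\F(-\xi)=\F(\xi)$ yields $\F(\xi)\geq \max\{\ell(\xi),-\ell(\xi)\}=\abs{\ell(\xi)}\geq 0$, whence $F(x_0,\xi)\geq \F(\xi)\geq 0$ for every $\xi$ and almost every $x_0$. The main technical difficulty is the first reduction: the absence of any continuity of $F$ in $x$ forces one to work dually against a test $\chi$ rather than by naive pointwise limits. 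Once (\ref{pwisenn}) is secured, the remainder is simply a verification that the positively $1$-homogeneous $\mD_s$-convex relaxation fits the hypotheses of Theorem \ref{convex} at the origin, where the automatic convexity of such functions at points of $\mD$ is exactly strong enough to force nonnegativity.
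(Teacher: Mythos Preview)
Your proof is correct and follows the same two-step strategy as the paper: first localize to obtain the autonomous inequality at almost every $x_0$, then apply Theorem \ref{relaxform}, Lemma \ref{finiterc}, and Theorem \ref{convex} at the origin to force $\F\geq 0$ and hence $F(x_0,\cdot)\geq\F\geq 0$. The only technical difference is in the localization step, where you dualize against a nonnegative test function $\chi$ and send $r\to 0$, whereas the paper uses periodic oscillation and the Riemann--Lebesgue lemma; both reach the same pointwise conclusion, and your final use of evenness to get $\F(\xi)\geq|\ell(\xi)|\geq 0$ is exactly the paper's argument (stated there as ``necessarily $\zeta=0$'').
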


\noindent
Before presenting the short proof of Theorem \ref{nonlin} let us see how it implies the Ornstein $\LL^1$
non--inequality stated in Theorem \ref{orne}. 

\begin{proof}[Proof of Theorem \ref{orne}]
Only one direction requires proof. To that end we write the differential operators in terms of linear mappings 
$\tilde{A}_{i}(x) \colon \odot^{k}(\Rn , \mathbb{V}) \to \mathbb{W}_i$ as follows:
$$
\tilde{A}_{i}(x)D^{k}\varphi = \sum_{\abs{\alpha}=k} a^{i}_{\alpha}(x)\partial^{\alpha}\varphi
$$
for all $\varphi \in \CC^{\infty}_{c}(\Rn , \mathbb{V})$.  Define 
$$
F(x,\xi ) := c \|\tilde{A}_{1}(x)\xi  \|- \|\tilde{A}_{2}(x)\xi \| \quad \mbox{ for }\quad (x,\xi )\in \Rn \times \odot^{k}(\Rn ,\mathbb{V}), 
$$
and note that Theorem \ref{nonlin} applies to the function $F$. Accordingly, $F(x, \cdot )$ is a nonnegative function for 
almost all $x$, and therefore the set inclusion, $\mathrm{ker} \, \tilde{A}_{2}(x) \supset \mathrm{ker} \, 
\tilde{A}_{1}(x)$, must in particular hold for the kernels for all such $x$. Fix $x$ such that $F(x, \cdot )$ is nonnegative.
We define a linear mapping $C(x)\colon \mathbb{W}_1 \to \mathbb{W}_2$ by 
$$
C(x)=\tilde{A}_{2}(x) \circ 
\biggl( \tilde{A}_{1}(x)|_{\bigl( \mathrm{ker}\tilde{A}_{1}(x) \bigr)^{\perp}}\biggr)^{-1}
\circ\text{proj}_{\mathrm{im}\tilde{A}_{1}(x)} .
$$ 
Hereby $C$ is defined almost everywhere in $\Rn$, is measurable, and $\tilde{A}_{2}(x) = C(x)\tilde{A}_{1}(x)$ holds for almost all $x$. 
Finally, the nonnegativity of $F(x, \cdot )$ yields the uniform norm bound $\| C \|_{\LL^{\infty}} \leq c$.
\end{proof}

\begin{proof}[Proof of Theorem \ref{nonlin}]
We assume that the integral bound (\ref{nonlinorn}) holds and shall deduce the pointwise bound $F \geq 0$.
First we reduce to the autonomous case: $F=F(\xi )$. The procedure for doing this is a variant of a well--known
proof showing that quasiconvexity is a necessary condition for sequential weak lower semicontinuity
of variational integrals (see \cite{mor66} Theorem 4.4.2). Let us briefly comment on the details. Denote by $C = (-1/2,1/2)^n$ the open
cube centred at the origin with sides parallel to the coordinate axes and of unit length, and let $\phi \in \CC^{\infty}_{c}(C,\RN )$.
We extend $\phi$ to $\Rn$ by $C$ periodicity, denote this extension by $\phi$ again, and put for $x_0 \in \Rn$, $r>0$, $j \in \N$,
$$
\varphi_{j}(x) := \left( \frac{r}{j}\right)^{k}\phi \left( j\frac{x-x_{0}}{r} \right) , \, x \in \Rn .
$$
Writing $C(x_{0},r) = x_{0}+rC$ we have for all $j \in \N$ that $\varphi_{j}|_{C(x_{0},r)} \in \CC^{\infty}_{c}(C(x_{0},r),\RN )$, and
so extending this restriction to $\Rn \setminus C(x_{0},r)$ by $0 \in \RN$ to get a test map for (\ref{nonlinorn}) we arrive at
$$
0 \leq \int_{C(x_{0},r)} \! F \bigl( x,D^{k}\phi \left( j\frac{x-x_{0}}{r} \right) \bigr) \, \dd x.
$$
If we let $j$ tend to infinity, then by a routine argument that uses the Riemann--Lebesgue lemma (see for instance \cite{Dac89} Chapter 1)
it follows that
$$
0 \leq \int_{C(x_{0},r)} \! \int_{C} \! F(x,D^{k}\phi (y)) \, \dd y \, \dd x.
$$
Since $x_0$, $r$ were arbitrary we deduce by the regularity of Lebesgue measure that
\begin{equation}\label{rieleb}
\int_{C} \! F(x,D^{k}\phi (y)) \, \dd y \geq 0
\end{equation}
holds for all $x \in \Rn \setminus N_{\phi}$ where $N_\phi$ is a negligible set. To see that (\ref{rieleb}) in fact holds
outside a negligible set $N \subset \Rn$ that is independent of $\phi$ one invokes the separability of
the space $\CC_{c}(C, \odot^{k}(\Rn , \RN ) )$ in the supremum norm. We leave the precise details of this to the
interested reader and also remark that a straightforward scaling argument shows that we may replace $C$ by $\Rn$
and allow any $\phi \in \CC^{\infty}_{c}(\Rn , \RN )$ in (\ref{rieleb}). It now remains to prove the
pointwise bound in the autonomous case $F=F(\xi )$.
By Theorem \ref{relaxform} the relaxation $\F$ given by the formula
$$
\mathcal{F}(\xi ) = \inf_{\varphi \in \CC^{\infty}_{c}((0,1)^{n}, \RN )} \int_{(0,1)^n} \! F(\xi + D^{k}\varphi (x)) \dd x
$$
is a rank--one convex function $\F \colon \odot^{k}(\Rn , \R^N ) \to [-\infty ,\infty )$. Recall that by {\em rank--one convex} function in
this context we understand a function which is convex in the directions of the cone $\mD_{s}(n,k, \R^N )$. Now
the integral bound (\ref{nonlinorn}) translates to $\mathcal{F}(0) \geq 0$, and it then follows from Lemma \ref{finiterc} that
$\mathcal{F}$ is a rank--one convex function which is real--valued {\em everywhere}. By the homogeneity of $F$ and of $\CC^{\infty}_{c}$ one 
also easily checks that $\mathcal{F}$ is $1$--homogeneous. Consequently, as the cone $\mD_{s}(n,k; \R^N )$ is balanced and
spanning, Theorem \ref{convex} implies in particular that $\mathcal{F}$ is convex at
$0 \in \odot^{k}(\Rn , \RN )$: there exists $\zeta \in \odot^{k}(\Rn , \RN )$ such that
$\mathcal{F}(\xi ) \geq \langle \xi , \zeta \rangle$ for all $\xi \in \odot^{k}(\Rn , \RN )$.
Taking $\xi = \pm \zeta$ and using $1$--homogeneity we see that necessarily $\zeta =0$. The conclusion
follows because $F \geq \mathcal{F}$.
\end{proof}

\begin{remark}
The proof given above for Theorem \ref{nonlin} can easily be adapted to also prove the following
result. For a $\CC^{\infty}$ map $\phi \colon \Rn \to \RN$ we denote by $J^{k-1}\phi (x)$
its $k-1$ jet at $x$: the vector consisting of all partial derivatives $D^{\alpha}\phi_{i} (x)$
of coordinate functions $\phi_i$ corresponding to all multi--indices $\alpha$ of length at most $k-1$ 
arranged in some fixed order. Hereby $J^{k-1}\phi \colon \Rn \to \R^d$ for a $d=d(n,k,N)$.

Let $F \colon \Rn \times \R^d \times \odot^{k}(\Rn , \RN ) \to \R$ be a Carath\'{e}odory integrand
(measurable in $x \in \Rn$ and jointly continuous in $(y,\xi )$) satisfying
$$
F(x,y,t\xi ) = |t|F(x,y,\xi )
$$
and
$$
|F(x,y,\xi )| \leq a(x)(|y|+|\xi |) 
$$
for almost all $x \in \Rn$, all $y \in \R^d$, $t \in \R$ and $\xi \in \odot^{k}(\Rn , \RN )$, where
$a \in \LL^{1}_{\rm loc}(\Rn )$ is a given function.

Then 
$$
\int_{\Rn} \! F(x,J^{k-1}\phi (x), D^{k} \phi (x)) \dd x \geq 0
$$
holds for all $\phi \in \CC^{\infty}_{c}(\Rn , \RN )$ {\em only if}
$F(x,0,\xi ) \geq 0$ for almost all $x \in \Rn$ and all $\xi \in \odot^{k}(\Rn , \RN )$.
\end{remark}

We now turn to:

\begin{proof}[Proof of Theorem \ref{hessian}]
Only the necessity part requires a proof.
Assume that there exists a nontrivial $\mD$--affine quadratic form $m$ on $\mV$, where $\mD$ is a cone of directions
satisfying (\ref{mysterious}).

In the setting of Section \ref{calculus} we consider the spaces $Y=\R$ and $X = \mV$.


The second derivatives that we are interested in will belong to the space
$\odot^2 (\mV ) \cong \odot^{2}(\mV ,\R)$.  The corresponding {\em rank-one cone} is the usual one in $\odot^2 (\mV )$, namely 
$\{ t \cdot \ell \otimes \ell : \, t \in \R \mbox{ and } \ell \in \mV^{\ast} \}$ (indeed, if we choose a basis for $\mV$ we 
may identify $\mV \cong \Rn$, $\odot^2 (\mV ) \cong \R^{n \times n}_{\rm sym}$ and the rank-one cone is simply the cone of
symmetric $n \times n$ matrices of rank at most one). This cone is of course balanced and spanning.
Next, and more specific to our  situation we observe that for an open subset $O$ of $\mV$ a $\CC^2$ function 
$\varphi \colon O \to \R$ is $\mD$--convex if for all $x\in O$ we have
$D^2 \varphi(x) \in \mC$ where
$$
\mC := \biggl\{ \mu \in \odot^{2} (\mV ) \, : \; \mu (e,e) > 0 \text{ for all } e \in \mD\setminus \{ 0 \} \biggr\} .
$$
Clearly, $\mC$ is an open convex cone, and in the notation from the Introduction we have $\mC = \mathrm{int} \q_{\mD}$.
The assumption (\ref{mysterious}) therefore amounts to $\ell \otimes \ell \in \mC$ for some $\ell \in \mV^{\ast}$. Put
$\mu_{0} = \ell \otimes \ell$ so that $\mu_0$ is a rank one form in $\mC$. If $\mu_{m} = D^{2} m(0)$ we have that 
$\mu_{0}+t\mu_{m} \in \mC$ for all $t \in \R$. We now consider the function $F( \mu ) := -\| \mu \|$ on $\mC$. By Theorem 
\ref{relaxform} the relaxation $\F$ is rank-one convex, and since $F< \infty$ also $\F < \infty$. We assert that 
$\F \equiv -\infty$. Assume not, so that $\F$ is finite somewhere in $\mC$. Then by Lemma \ref{finiterc}
$\F > -\infty$ everywhere on $\mC$, so that $\F$ is a real--valued and rank--one convex function on $\mC$.
From the definition of $\F$ and the homogeneity of $F$, $\CC^{\infty}_{c}$ we infer that $\F$ is
positively $1$--homogeneous. By virtue of Theorem \ref{convex} $\F$ is therefore convex at $\mu_0$ so that in particular:
\begin{eqnarray*}
\F (\mu_0 ) & \leq & \frac{1}{2}\biggl( \F (\mu_{0}+t\mu_{m})+\F ( \mu_{0}-t\mu_{m})\biggr)\\
& \leq & -\frac{1}{2}\biggl( \| \mu_{0}+t\mu_{m} \| + \| \mu_{0}-t\mu_{m} \| \biggr) ,
\end{eqnarray*}
which is impossible for large values of $t$. This contradiction shows that $\F\equiv -\infty $ on $\mC$. 

In view of the definition of the relaxation $\F$ we then find for any $\mu \in \mC$ and each 
$\eps>0$ an $\varphi\in \CC_c^\infty(Q,\R )$ with $\mu+D^2 \varphi(x)\in \mC$
for all $x\in \mV$ such that $\int_Q (-\|\mu + D^2 \varphi\|)<-1/\eps$, 
$\| \varphi \|_{L^\infty(Q)}+\| D \varphi\|_{L^\infty(Q)}<\eps$, and 
$$
[ D\varphi ]_{\omega} := \sup \left\{ \frac{\| D\varphi (x)- D\varphi (y)\|}{\omega (|x-y|)} : 
\, x,y \in Q, \, x \neq y \right\} < \eps ,
$$ 
where $Q$ is the unit cube in $\mV$ with respect to some (from now on fixed) orthonormal basis of $\mV$ and the integral 
is calculated with respect to the corresponding Lebesgue measure. There is nothing special by the open unit cube $Q$
in the above, and it is possible to replace it by any bounded open subset $O$ of $\mV$. Indeed, we
simply replace $\varphi$ by $x\mapsto \l^{-2}\varphi(\l x+y)$ for $\l >0$ and $y\in \mV$ and express
$O$ as a disjoint union of closed cubes $y+\lambda \bar{Q}$ to achieve this.

The existence of the function claimed in the statement of Theorem \ref{hessian}
is now an easy consequence of the Baire category theorem. 
We consider the set $A$ of all rank--one convex $\CC^1$ functions $f \colon X\to \R$ such that 
$$
\|f\|=\sup \left\{\frac{|f(x)|}{1+\|x\|^2 } + \frac{\| Df(x)- Df(y)\|}{\omega (|x-y|)(1+|x|+|y|)}
\;:\; x, y \in X, \, x \neq y \right\} 
$$
is finite and equip it with the metric $d(f,g) := \| f-g \|$. It is easy to verify that $(A,d)$ is a complete metric space,
and we denote by $B$ the closure of smooth functions in $A$. With the inherited metric this is clearly also a complete metric
space. Now let $(O_l)_{l=1}^\infty$ be a sequence consisting of all dyadic cubes in $\mV$ of side length at most one and define, 
for $l,k\geq 1$,
$$
B_{l,k}=\left\{f \in B\;:\; \int_{O_l} f \frac{\partial^2 \psi}{\partial x^{\alpha}} \leq k \text{ if }
\psi \in \CC^\infty_c(O_l,\R), \|\psi\|_\infty\leq 1 \text{ and } \abs{\alpha} =2 \right\}.
$$
Clearly, each $B_{l,k}$ is a closed set in $B$ and for $f \in \CC^2$ the usual integration by parts argument
applied to a $\psi$ that sufficiently well approximates $\text{sign}({\partial^2 f/\partial x^{\alpha}})$ 
shows that we have 
$$
f\in B_{l,k}\quad\Leftrightarrow \quad
\int_{O_l}\left| \frac{\partial^2 f}{\partial x^{\alpha}}\right|\leq k \quad \mbox{ for all } \abs{\alpha}=2.
$$
We finish our proof by showing that each $B_{l,k}$ has empty interior since then  
$B\setminus \bigcup_{l,k} B_{l,k}$ is nonempty and obviously each $f$ in this set satisfies the conclusion of 
Theorem \ref{hessian}. But otherwise, we would find an open ball $B(f,\delta)$ inside some 
$B_{l,k}$. 
The definition of $B$ as the closure of smooth functions means that we without loss of 
generality may assume $f\in \CC^\infty$. Now we find a $C\in \mC$ considered above such that 
$\|\phi_C\|<\delta/3$, here $\phi_C(x)=\frac12 C(x,x)$ and the first part of our proof 
ensures the existence of a $\varphi\in \CC^\infty_c(O_l)$ such that 
$\|\varphi\|\leq \|\varphi\|_\infty<\delta/3$ but $\int_{O_l} \| D^2 (\varphi_C+\varphi )\|>3n^4 k$.
 In particular, there must exist a multi-index $\alpha$ of length two such that 
$\int_{O_l}  |\partial^2( \varphi_C+\varphi)/\partial x^{\alpha}|>3k$. From this it is clear that
$f+\varphi_C+\varphi \in B(f,\delta)$ but   
$$
\int_{O_l} \! \left|\frac{\partial^2( f+\varphi_C+\varphi)}{\partial x^{\alpha}}\right|>2k,
$$
a contradiction finishing our proof of Theorem \ref{hessian}. 

\end{proof}

\section{Gradient Young Measures} \label{applications}

A convenient way to describe the one--point statistics in a bounded sequence
of vector--valued measures where both oscillation and concentration phenomena must
be taken into account is through a notion of Young measure that was
introduced by DiPerna and Majda in \cite{DM87}. This formalism was revisited in \cite{AB97}
and a special case of particular importance was developed into a very convenient and suggestive
form. This was the starting point for \cite{KR10b}, and we shall briefly 
pause to describe the relevant set--up here.

Throughout this section $\Omega$ denotes a fixed bounded Lipschitz domain in $\Rn$ and $\Hi$ a
finite dimensional real inner product spaces.

Let $\Eb (\Omega , \Hi )$ denote the space of \textbf{test integrands} consisting of all continuous 
functions $f \colon \cl{\Omega} \times \Hi \to \R$ for which
$$
f^{\infty}(x,z) = \lim_{t \to \infty} \frac{f(x,tz)}{t}
$$
exists locally uniformly in $(x,z) \in \cl{\Omega} \times \Hi$. Note that hereby the
\term{recession function} $f^{\infty}$ is jointly continuous and $z \mapsto f^{\infty}(x,z)$
is for each fixed $x$ a positively $1$--homogeneous function.

The set of Young measures under consideration, denoted by $\Yb(\Omega , \Hi )$, is defined to be the set of 
all triples $( (\nu_{x} )_{x \in \Omega},\lambda , (\nu^\infty_{x} )_{x \in \overline{\Omega}})$ such that
\begin{align*}
\bullet  \quad &(\nu_x)_{x \in \Omega} \mbox{ is a weakly}^\ast \Lcal^{n} \mbox{ measurable family of probability measures on } \Hi ,\mbox{ and}\\
  & \int_{\Omega} \! \int_{\Hi} \! |z| \, \di \nu_x (z) \dd x < \infty ,\\
\bullet \quad &\lambda \mbox{ is a nonnegative finite Radon measure on } \cl{\Omega}, \\
\bullet \quad  &(\nu_x^\infty)_{x\in \cl{\Omega}} \mbox{ is a weakly}^\ast \lambda \mbox{ measurable family of probability measures on } 
\SH , \mbox{ where } \SH\\ 
  & \mbox{is the unit sphere in } \Hi . 
\end{align*}
Here, the maps $x \mapsto \nu_x$ and $x \mapsto \nu_x^\infty$ are only defined up to $\Lcal^n$- and 
$\lambda$--negligible sets, respectively. The parametrized measure $(\nu_x)_x$ is called the \term{oscillation measure}, 
the measure $\lambda$ is the \term{concentration measure} and $(\nu_x^\infty)_x$ is the \term{concentration-angle measure}. 
As the terms indicate, the oscillation measure is the usual Young measure and describes the oscillation (or failure of convergence in $\Lcal^n$ measure), 
the concentration measure describes the location in $\cl{\Omega}$ and the magnitude of concentration (or failure of $\Lcal^n$ equi--integrability) 
while the concentration--angle measure describes its direction in $\Hi$. Often, we will simply write $\nu$ as short-hand for the above triple.

To every $\Hi$--valued Radon measure $\gamma$ supported on $\cl{\Omega}$
with Lebesgue--Radon--Nikod\'{y}m decomposition $\gamma = a \Lcal^d + \gamma^s$, we 
associate an \textbf{elementary Young measure} $\epsilon_\gamma \in \Yb(\Omega , \Hi )$ given by the definitions
$$
  (\epsilon_\gamma)_x := \delta_{a(x)},  \qquad \lambda_{\epsilon_\gamma} := \abs{\gamma^s},
    \qquad (\epsilon_\gamma)_x^\infty := \delta_{p(x)},
$$
where $p := \di \gamma^{s}/\di \abs{\gamma^s}$ is the  Radon--Nikod\'{y}m derivative and $\abs{\gamma^s}(=\abs{\gamma}^{s})$ 
denotes the total variation measure of $\gamma^s$.

Using the inner product on $\Hi$ we can define a duality pairing
of $f \in \Eb (\Omega , \Hi )$ and $\nu \in \Yb (\Omega , \Hi )$ by setting
$$
\ddpr{f , \nu} := \int_{\Omega} \! \int_{\Hi} \! f(x,z) \dd \nu_{x}(z) \dd x + \int_{\cl{\Omega}} 
\! \int_{\SH} \! f^{\infty}(x,z) \dd \nu_{x}^{\infty}(z) \dd \lambda (x).
$$
For a sequence $(\gamma_j)$ of $\Hi$--valued Radon measures with $\sup_j \abs{\gamma_j}(\cl{\Omega}) < \infty$, we say 
that $( \gamma_j )$ \term{generates} the Young measure $\nu \in \Yb(\Omega , \Hi )$, in 
symbols $\gamma_j \toY \nu$, if for all $f \in \Eb(\Omega , \Hi )$ we have $\ddpr{f, \epsilon_{\gamma_{j}}} \to \ddpr{ f, \nu }$, or
equivalently,
\begin{equation} \label{eq:generation}
f \Big( x, \frac{\di \gamma_j}{\di \Lcal^n}(x)\Big) \,\Lcal^n
      + f^\infty \Big(x, \frac{\di \gamma^s_j}{\di \abs{\gamma^s_j}}(x) \Big) \, \abs{\gamma^s_j}
\to  \dprb{f(x,\cdot ), \nu_x} \, \Lcal^n  + \dprb{f^\infty(x,\cdot ),
      \nu_x^\infty} \, \lambda   
\end{equation}
weakly$\mbox{}^\ast$ in $\CC (\cl{\Omega})^{\ast}$ as $j \to \infty$.
A continuity result due to Reshetnyak \cite{R68} can be used to justify the formalism (see \cite{KR10b}) in the sense that
for measures $\gamma_{j}$, $\gamma$ not charging $\partial \Omega$ we have
$\gamma_{j} \to \gamma$ $\langle \cdot \rangle$--strictly if and only if $\epsilon_{\gamma_{j}} \toY \epsilon_{\gamma}$. Here 
the former is taken to mean that $\gamma_{j} \tows \gamma$ in $\CC_{0}(\Omega , \Hi )^{\ast}$ and 
$\langle \gamma_{j} \rangle (\Omega ) \to \langle \gamma \rangle (\Omega )$, where
$$
\langle \gamma \rangle (\Omega ) := \int_{\Omega} \! \left( 1 + \abs{\frac{\di \gamma}{\di \Lcal^{n}}}^{2} \right)^{\frac{1}{2}} 
\, \di \Lcal^{n} + \abs{\gamma^{s}}(\Omega ) .
$$
Standard compactness theorems for measures imply that any bounded sequence $(\gamma_j)$ of $\Hi$--valued Radon measures 
on $\overline{\Omega}$ admits a subsequence (not relabelled) such that $\gamma_j \toY \nu$ for some Young measure 
$\nu \in \Yb(\Omega , \Hi )$. We are particularly interested in the situation where the measures $\gamma_j$ are 
concentrated on $\Omega$.
Making special choices of test integrands $f$ in (\ref{eq:generation}) we deduce that
$\gamma_{j} \tows \gamma$ in $\CC_{0} (\Omega , \Hi )^{\ast}$, where $\gamma = \bar{\nu} \lfloor \Omega$ and
$$
\bar{\nu} = \bar{\nu}_{x}\Lcal^n +\bar{\nu}_{x}^{\infty} \lambda \, \mbox{ with } \bar{\nu}_{x} = \langle \mathrm{id},\nu_{x} \rangle \mbox{ and }
\bar{\nu}_{x}^{\infty} = \langle \mathrm{id},\nu_{x}^{\infty}\rangle
$$
is the \textbf{barycentre} of $\nu$. If $\lambda (\partial \Omega )=0$, then $\gamma = \bar{\nu}$ and the convergence takes 
place in the stronger sense that $\gamma_{j} \tows \gamma$ narrowly on $\Omega$ (meaning 
$\langle \Phi , \gamma_{j} \rangle \to \langle \Phi , \gamma \rangle$ for all bounded continuous maps $\Phi \colon \Omega \to  \Hi$).

Given a mapping of bounded variation, $u \in \BV(\Omega ,\RN )$, its distributional derivative $Du$ is an $\R^{N \times n}$--valued
Radon measure concentrated on $\Omega$ and hence we can associate
an elementary Young measure $\epsilon_{Du} \in \Yb(\Omega ,\R^{N \times n})$. Writing $Du = \nabla u \Lcal^{n}+D^{s}u$ for the Lebesgue decomposition 
with respect to $\Lcal^n$ we have
$$
  (\epsilon_{Du})_x := \delta_{\nabla u(x)},  \qquad \lambda_{\epsilon_{Du}} := \abs{D^s u},
    \qquad (\epsilon_{Du})_x^\infty := \delta_{p(x)},
$$
where $p := D^s u/\abs{D^s u}$ is short-hand for the Radon-Nikod\'{y}m derivative.

Any Young measure $\nu \in \Yb (\Omega , \R^{N \times n})$ that can be generated by a sequence $(Du_j )$, where
$(u_j)$ is a bounded sequence in $\BV(\Omega ,\RN )$ that $\LL^1$--converges to a map $u \in \BV(\Omega ,\RN )$ is called 
a \textbf{BV gradient Young measure} with underlying deformation $u$. Clearly the underlying deformation $u$ is
locally unique up to an additive constant vector.
If we assume that $\lambda (\partial \Omega) = 0$, 
then $Du$ is the barycentre of $\nu$ and identifying terms according to the Lebesgue decomposition 
$\lambda = a\Lcal^n +b\abs{D^{s}u}+\lambda^{\ast}$ where $\lambda^{\ast}$ is a measure which is singular with respect 
to $\Lcal^n + \abs{D^{s}u}$ we find
\begin{eqnarray}
\nabla u(x) &=& \bar{\nu}_{x} + \bar{\nu}^{\infty}_{x}\frac{\di \lambda}{\di \Lcal^n}(x) \quad \Lcal^{n} \mbox{ a.e. } x \in \Omega \label{ac}\\
\frac{D^{s}u}{\abs{D^{s}u}}(x) &=& b(x)\bar{\nu}^{\infty}_{x} \quad \abs{D^{s}u} \mbox{ a.e. } x \in \Omega \label{singd}\\
0 &=& \bar{\nu}^{\infty}_{x} \quad \lambda^{\ast} \mbox{ a.e. } x \in \Omega . \label{sing} 
\end{eqnarray}
These conditions are clearly necessary for $\nu$ to be a $\BV$ gradient Young measure with barycentre $Du$.
Other necessary conditions follow, loosely speaking, by expressing a relaxation result for signed integrands of linear growth
obtained in \cite{KR10a}, in terms of Young measures. It turns out that these conditions are sufficient for $\nu$
to be a $\BV$ gradient Young measure too. The proof of this is based on a Hahn--Banach argument similar to that employed by 
Kinderlehrer and Pedregal in \cite{KP91,KP94}. Hereby a characterization of $\BV$ gradient Young measures is obtained: they are  
the dual objects to quasiconvex functions in the sense that a set of inequalities akin to Jensen's inequality must hold for 
the measures and all quasiconvex functions of at most linear growth. It can be seen as a nontrivial instance within the abstract frame work
provided by Choquet's theory of function spaces and cones (see \cite{LMNS}). In order to state the result more precisely
we denote by $\Qb$ the class of all quasiconvex functions of at most linear growth, so $f \in \Qb$ when $f \colon \R^{N \times n} \to \R$ 
is quasiconvex and there exists a real constant $c=c_{f}$ such that $\abs{f(\xi )} \leq c \bigl( \abs{\xi}+1 \bigr)$ for all $\xi$.

\begin{theorem}[\cite{KR10b}]\label{gym}
Let $\Omega$ be a bounded Lipschitz domain in $\Rn$. Then, a Young measure $\nu \in \Yb(\Omega , \R^{N \times n})$ 
satisfying $\lambda (\partial \Omega) = 0$ is a $\BV$ gradient Young measure if and only if there exists $u \in \BV(\Omega ,\RN )$ 
such that $Du$ is the barycentre for $\nu$, and, writing $\lambda = \tfrac{\di \lambda}{\di \Lcal^n} \Lcal^n + \lambda^{s}$ for the
Lebesgue--Radon--Nikod\'{y}m decomposition, the following conditions hold for all $f \in \Qb$:
\begin{enumerate}
  \item[(i)]  $\displaystyle f \bigl(\bar{\nu}_{x} + \bar{\nu}^{\infty}_{x}\frac{\di \lambda}{\di \Lcal^n}(x)\bigr) \leq \dprb{f, \nu_x}
    + \dprb{f^\infty , \nu_x^\infty} \, \frac{\di \lambda}{\di \Lcal^n}(x)
    \quad\text{ $\Lcal^n$ a.e.\ $x \in \Omega$,}$
  \item[(ii)] $\displaystyle f^\infty ( \bar{\nu}^{\infty}_{x} ) \leq \dprb{ f^\infty, \nu_x^\infty}$ \quad $\lambda^{s}$ a.e. $x \in \Omega$ .
\end{enumerate}
\end{theorem}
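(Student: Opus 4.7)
My plan is to prove the two implications separately: \emph{necessity} of (i)--(ii) by localising a lower semicontinuity result for quasiconvex integrands of linear growth on $\BV$, and \emph{sufficiency} by a Hahn--Banach separation argument in the spirit of Kinderlehrer and Pedregal \cite{KP91,KP94}.

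For the necessity direction I would take a bounded sequence $(u_j)\subset\BV(\Omega,\R^N)$ with $u_j\to u$ in $\LL^1$ and $Du_j\toY\nu$, and first identify $Du$ as the restriction to $\Omega$ of the barycentre of $\nu$ by testing the generation statement (\ref{eq:generation}) against linear integrands $f(x,z)=\varphi(x)\cdot z$, using the weak-$\ast$ convergence $Du_j\tows Du$ in $\CC_0(\Omega,\R^{N\times n})^\ast$ together with the standing assumption $\lambda(\partial\Omega)=0$. Then, for a given $f\in\Qb$, the classical lower semicontinuity result of Ambrosio--Dal~Maso and Fonseca--M\"uller for linear--growth quasiconvex integrands on $\BV$ yields
$$
\int_\Omega f(\nabla u)\,\di x+\int_\Omega f^\infty\!\bigl(\tfrac{\di D^s u}{\di |D^s u|}\bigr)\,\di |D^s u|\leq\liminf_{j\to\infty}\biggl(\int_\Omega f(\nabla u_j)\,\di x+\int_\Omega f^\infty\!\bigl(\tfrac{\di D^s u_j}{\di |D^s u_j|}\bigr)\,\di |D^s u_j|\biggr),
$$
whose right--hand side coincides with $\ddpr{f,\nu}$ by (\ref{eq:generation}), $f$ being viewed as an $x$--independent element of $\Eb(\Omega,\R^{N\times n})$. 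Replacing $f$ by the weighted integrand $\varphi(x)f$ with $\varphi\in\CC_c(\Omega)$, $\varphi\geq 0$, and then letting $\varphi$ concentrate around a generic point of $\Omega$ --- applying Lebesgue differentiation once with respect to $\Lcal^n$ and once with respect to $\lambda^s$ --- extracts the pointwise Jensen inequalities (i) and (ii).

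For sufficiency I would fix $u\in\BV(\Omega,\R^N)$ and let $\mathcal{G}(u)$ denote the set of all $\nu\in\Yb(\Omega,\R^{N\times n})$ generated by some sequence $(Du_j)$ with $u_j\to u$ in $\LL^1$ and $\sup_j|Du_j|(\cl{\Omega})<\infty$. A partition--of--unity / staircase construction, combined with a diagonal extraction, shows that $\mathcal{G}(u)$ is convex and closed in the natural duality between $\Yb$ and the separable space $\Eb(\Omega,\R^{N\times n})$. Were some $\nu$ satisfying (i)--(ii) with barycentre $Du$ not to belong to $\mathcal{G}(u)$, Hahn--Banach would supply $g\in\Eb(\Omega,\R^{N\times n})$ with $\ddpr{g,\nu}<\inf_{\mu\in\mathcal{G}(u)}\ddpr{g,\mu}$. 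Passing to the $x$--wise quasiconvex envelope $Qg$, and using that the infimum on the right equals $\ddpr{Qg,\epsilon_{Du}}$ by the defining relaxation formula for quasiconvex envelopes, one deduces a violation of either (i) or (ii) for the integrand $f=Qg(x,\cdot)\in\Qb$, the desired contradiction.

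The principal obstacle is the handling of the recession function in the Hahn--Banach step: one must guarantee that $(Qg)^\infty$ exists pointwise, is $1$--homogeneous and rank--one convex, and agrees with $Q(g^\infty)$ at rank--one directions, so that the separation inequality can be transferred to the singular part of $\nu$ encoded by $\nu^\infty$ and $\lambda$. This is exactly where Corollary~\ref{homorc1} becomes essential: the $1$--homogeneous rank--one convex function $(Qg)^\infty$ is automatically convex at every rank--one matrix, and since by Alberti's rank--one theorem \cite{Al93} concentrations of $\BV$ gradients are carried precisely by rank--one matrices, this is the minimal regularity required to pull the pointwise inequality (ii) out of the Hahn--Banach separation.
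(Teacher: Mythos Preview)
This theorem is not proved in the present paper at all --- it is quoted from \cite{KR10b}, and the surrounding text only sketches the strategy: necessity is obtained by localising the relaxation formula for \emph{signed} linear--growth integrands established in \cite{KR10a}, and sufficiency follows from a Hahn--Banach argument in the style of Kinderlehrer--Pedregal. Your outline matches that sketch, so at the level of overall architecture you are aligned with the original source.

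Two points nevertheless deserve correction. First, for the necessity direction you invoke the Ambrosio--Dal~Maso and Fonseca--M\"uller lower semicontinuity theorems. Those results, in their classical form, require a nonnegativity (or uniform lower bound) assumption on the integrand, whereas $f\in\Qb$ is merely quasiconvex with two--sided linear growth and may well be unbounded from below. The correct input is precisely the relaxation result of \cite{KR10a} for \emph{signed} integrands, which the paper cites for this reason.

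Second, and more importantly, your final paragraph misplaces the role of Corollary~\ref{homorc1} and Alberti's rank--one theorem. Neither is needed for Theorem~\ref{gym}: in the sufficiency direction you are \emph{assuming} condition (ii), so once you have produced a separating integrand $g$ and passed to $Qg(x,\cdot)\in\Qb$, you simply apply (i) and (ii) to it and reach the contradiction. The genuine technical obstacle in the Hahn--Banach step is ensuring that $Qg$ lies in $\Eb(\Omega,\R^{N\times n})$, i.e.\ that its recession function exists as a true locally uniform limit rather than a limsup; this is handled in \cite{KR10b} by approximation and truncation devices, not by any automatic--convexity statement. Corollary~\ref{homorc1} together with Alberti's theorem enters only in the proof of the \emph{next} result, Theorem~\ref{cgym}, whose whole point is to show that condition (ii) is redundant: there one must manufacture (ii) out of thin air, and that is exactly what convexity of $f^\infty$ at rank--one matrices (Corollary~\ref{homorc1}) combined with the fact that $\bar{\nu}_x^\infty$ lies on the rank--one cone $\lambda^s$--a.e.\ (Alberti) delivers. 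You have, in effect, smuggled the mechanism of Theorem~\ref{cgym} into an argument for Theorem~\ref{gym}, where it is both unnecessary and anachronistic relative to \cite{KR10b}.
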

Here the symbol $f^{\infty}$ refers to the (upper) recession function of $f$ as defined at (\ref{urecess}).
The condition $(ii)$ concerning the points $x$ seen by the singular part of the measure $\lambda$ is simply Jensen's inequality 
for the probability measure $\nu_{x}^{\infty}$ and the $1$--homogeneous quasiconvex function $f^{\infty}$. Condition $(i)$ concerning points
$x$ seen by the absolutely continuous part of the measure $\lambda$ is reminiscent of both Jensen's inequality and (\ref{keybound}).
It expresses how oscillation and concentration must be coupled if created by a sequence of gradients. The question of whether this 
characterization can be simplified, and indeed whether condition $(ii)$ is necessary at all, was in fact the initial motivation for the 
work undertaken in the present paper. Inspection of the above characterization yields in particular that a nonnegative finite Radon
measure $\lambda$ on $\cl{\Omega}$ with $\lambda (\partial \Omega )=0$ is the concentration measure for a $\BV$ gradient Young measure
with underlying deformation $u \in \BV (\Omega , \RN )$ if and only if $\lambda^{s} \geq \abs{D^{s}u}$ as measures on $\Omega$. In fact,
it is not hard to see this directly (even without the assumption that $\lambda (\partial \Omega )=0$), and the reason for mentioning
it at this stage is that we would like to emphasize that the concentration measure is more or less arbitrary.

In order to state the the main result of this section we introduce the following notation for special test integrands:
$$
\SQb = \SQb (\R^{N \times n}) = \left\{ f \in \Qb : \, 
\begin{array}{l}
f \mbox{ is Lipschitz, } f(\xi ) \geq \abs{\xi} \mbox{ for all } \xi ,\mbox{ and for some}\\
r_{f} >0 \mbox{ we have } f( \xi )=f^{\infty}(\xi ) \mbox{ when } \abs{\xi} \geq r_{f}  
\end{array}
\right\} .
$$
In terms of these the result is

\begin{theorem}\label{cgym}
Let $\Omega$ be a bounded Lipschitz domain in $\Rn$. Then a Young measure $\nu \in \Yb(\Omega;\R^{N \times n})$ satisfying
$\lambda (\partial \Omega) = 0$ is a $\BV$ gradient Young measure, if and only if $\nu$ has barycentre $\bar{\nu}=Du$ for 
some $u \in \BV (\Omega , \RN )$ and for all $f \in \SQb$ we have
\begin{equation}\label{reduced}
f\bigl( \bar{\nu}_{x} + \bar{\nu}^{\infty}_{x}\frac{\di \lambda}{\di \Lcal^n}(x) \bigr) \leq \dprb{f, \nu_x} +
\dprb{f^{\infty},\nu_{x}^{\infty}}\frac{\di \lambda}{\di \Lcal^n}(x)
\end{equation}
for $\Lcal^n$ almost all $x$.
\end{theorem}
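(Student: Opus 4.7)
The ``only-if'' direction is immediate from Theorem \ref{gym}: since $\SQb \subset \Qb$, the inequality (\ref{reduced}) is nothing more than the restriction to $\SQb$ of condition (i) of Theorem \ref{gym}, which holds for every $f \in \Qb$. The real work is in the converse. My plan is to derive conditions (i) and (ii) of Theorem \ref{gym} for every $f \in \Qb$ from the hypotheses of Theorem \ref{cgym}, and then invoke Theorem \ref{gym} to conclude.

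For condition (ii), since the barycentre of $\nu$ is $Du$, the identities (\ref{ac}), (\ref{singd}), (\ref{sing}) hold. Let $\lambda^{s} = b\abs{D^{s}u} + \lambda^{\ast}$ be the Lebesgue--Radon--Nikod\'{y}m decomposition of $\lambda^{s}$ with respect to $\abs{D^{s}u}$, so $b \geq 0$ and $\lambda^{\ast} \perp \abs{D^{s}u}$. Then (\ref{singd}) gives $\bar{\nu}_{x}^{\infty} = b(x)^{-1}\tfrac{\di D^{s}u}{\di \abs{D^{s}u}}(x)$ for $\abs{D^{s}u}$-a.e.\ $x$, while (\ref{sing}) gives $\bar{\nu}_{x}^{\infty} = 0$ for $\lambda^{\ast}$-a.e.\ $x$. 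By Alberti's rank-one theorem \cite{Al93}, $\tfrac{\di D^{s}u}{\di \abs{D^{s}u}}$ takes values in the rank-one cone $\abs{D^{s}u}$-a.e., so in both sub-cases $\bar{\nu}_{x}^{\infty}$ lies in the closed rank-one cone $\lambda^{s}$-a.e. For any $f \in \Qb$, the recession $f^{\infty}$ is positively $1$-homogeneous and rank-one convex, so Corollary \ref{homorc1} supplies a linear $\ell \colon \R^{N \times n} \to \R$ with $\ell(\bar{\nu}_{x}^{\infty}) = f^{\infty}(\bar{\nu}_{x}^{\infty})$ and $f^{\infty} \geq \ell$ on $\R^{N \times n}$. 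Integrating against the probability measure $\nu_{x}^{\infty}$, whose barycentre is $\bar{\nu}_{x}^{\infty}$, yields $\dprb{f^{\infty},\nu_{x}^{\infty}} \geq \ell(\bar{\nu}_{x}^{\infty}) = f^{\infty}(\bar{\nu}_{x}^{\infty})$, which is exactly (ii).

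For condition (i), the inequality for $f \in \Qb$ must be extracted from (\ref{reduced}) tested against $g \in \SQb$. The plan is an approximation argument: (\ref{reduced}) is invariant under adding affine functions to $f$ (the extra terms cancel against the barycentre condition), so after an affine adjustment and addition of a large multiple of $\abs{\cdot}$ we may assume $f \geq \abs{\xi}$. It then remains to force the resulting function to coincide with its recession outside a ball; this is to be achieved by forming, for $R_{k}\to\infty$, the quasiconvex envelope $g_{k}$ of a truncation that agrees with $f$ on $B_{R_{k}}$ and with $f^{\infty}$ off $B_{R_{k}}$, so that $g_{k}\in\SQb$. Applying (\ref{reduced}) to each $g_{k}$ and passing to the limit $R_{k}\to\infty$ gives (i) for $f$. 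The main obstacle lies in this envelope construction: quasiconvexity is fragile under truncation, and one must verify that $g_{k}\to f$ and $g_{k}^{\infty}\to f^{\infty}$ in a sense strong enough to take the limit in the Young-measure pairings. An alternative route, likely closer to the route taken in the actual proof, is a Hahn--Banach / Choquet-style separation argument in the spirit of \cite{KR10b}: one assumes for contradiction that $\nu$ does not generate a BV gradient, extracts a separating test integrand in $\Eb(\Omega,\R^{N\times n})$, and uses Corollary \ref{homorc1} to argue that the separating integrand can be chosen in $\SQb$, contradicting (\ref{reduced}).
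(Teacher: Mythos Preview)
Your treatment of the necessity direction and of condition (ii) is correct and matches the paper's argument: Alberti's rank-one theorem together with (\ref{singd}) and (\ref{sing}) forces $\bar{\nu}_x^\infty$ onto the rank-one cone $\lambda^s$-a.e., and Corollary \ref{homorc1} then yields the Jensen inequality for $f^\infty$ exactly as you write.

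For condition (i), however, your proposal has a genuine gap. The quasiconvex-envelope-of-a-truncation idea is the wrong tool here: there is no reason the envelope should itself coincide with its recession function outside a ball (the envelope construction can alter behaviour at infinity), nor is it clear that $g_k \to f$ or $g_k^\infty \to f^\infty$. You correctly identify this as the obstacle, but you do not resolve it, and the Hahn--Banach alternative you sketch is not what the paper does either. The paper's route is a two-step direct approximation. First, for $f \in \Qb$ satisfying the extra condition $f=f^\infty$ outside some ball, Corollary \ref{homorc1} applied to $f^\infty$ at $0$ gives an affine $a \leq f$; then $g := |\cdot| + (f-a)/\eps$ lies in $\SQb$, so (\ref{reduced}) holds for $g$ and hence for $\eps g$, and letting $\eps \searrow 0$ gives (i) for $f$ (affine invariance of (i) disposes of $a$). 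Second, a general $f \in \Qb$ is approximated by
\[
g_\delta(\xi) := \max\bigl\{ f(\xi),\, f^\infty(\xi) + \delta|\xi| - \tfrac{1}{\delta}\bigr\},
\]
which is quasiconvex as a maximum of two quasiconvex functions and, crucially, equals $f^\infty(\xi)+\delta|\xi|-\tfrac{1}{\delta}$ for $|\xi|$ large (this follows from the definition (\ref{urecess}) of $f^\infty$). Hence $g_\delta + \tfrac{1}{\delta}$ falls under the first step, and one checks easily that $g_\delta \to f$ and $g_\delta^\infty \to f^\infty$ pointwise with uniform Lipschitz bounds, which suffices to pass to the limit in (i). This max construction is the missing idea in your argument.
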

In view of the examples of nonconvex, but quasiconvex positively $1$--homogeneous integrands given in \cite{Mu92} it seems 
that $\SQb$ is close to the minimal class of test integrands we could possibly expect to use in (\ref{reduced}). For instance we note
that it seems to be impossible to reduce the class of test integrands to the smaller one used in the characterization of 
ordinary $\WW^{1,1}$ gradient Young measures ({\em i.e.}, those $\nu$ with $\lambda = 0$) given in \cite{Kr99}.

\begin{proof}
Only the sufficiency part requires a proof, so assume 
$\nu = \bigl( ( \nu_{x})_{x \in \Omega}, \lambda , (\nu_{x}^{\infty})_{x \in \overline{\Omega}} \bigr)$ is a Young measure
such that $\lambda (\partial \Omega) = 0$, $\bar{\nu} = Du$ for some $u \in \BV (\Omega ,\RN )$ and that (\ref{reduced}) 
holds. We will show that conditions $(i)$ and $(ii)$ of Theorem \ref{gym} are satisfied, and start by Lebesgue decomposing the
measure $\lambda$ as $\lambda = a\Lcal^n +b\abs{D^{s}u}+\lambda^{\ast}$, where $\lambda^{\ast}$ is a measure which is 
singular with respect to $\Lcal^n + \abs{D^{s}u}$. Consider first condition $(ii)$, note
$\lambda^{s}=b\abs{D^{s}u}+\lambda^{\ast}$ and fix $f \in \Qb$.
Then the recession function $f^{\infty}$ is a positively $1$--homogeneous and quasiconvex function. Since quasiconvexity
implies rank-one convexity for real--valued functions we can use Corollary \ref{homorc1} whereby we infer that $f^{\infty}$ is convex at
all points of the rank one cone. Therefore Jensen's inequality holds for $f^\infty$ and {\em any} probability measure
with a centre of mass on the rank one cone. According to (\ref{sing}) the probability measure
$\nu_{x}^{\infty}$ has centre of mass at $0$ for $\lambda^{\ast}$ almost all $x \in \Omega$, so $(ii)$ holds $\lambda^{\ast}$
almost everywhere. For the remaining points $x \in \Omega$ seen by $\lambda^s$ we appeal to Alberti's rank-one theorem \cite{Al93}. 
Accordingly the matrix $D^{s}u/\abs{D^{s}u}$ has rank one for $\abs{D^s u}$ almost all $x \in \Omega$, and so by (\ref{singd}) the
probability measure $\nu_{x}^{\infty}$ has centre of mass on the rank one cone for $b\abs{D^{s}u}$ almost all $x \in \Omega$.
Therefore $(ii)$ holds for $b\abs{D^{s}u}$ almost all $x \in \Omega$, and so we have shown that it holds $\lambda^s$ almost 
everywhere in $\Omega$. Next we turn to $(i)$, and start by fixing $f \in \Qb$ with the additional property that $f=f^{\infty}$ 
outside a large ball in matrix space. Because $f^\infty$ in particular must be rank--one convex and positively $1$--homogeneous
we deduce from Corollary \ref{homorc1} that $f^{\infty} \geq \ell$ for some linear function $\ell$ on $\R^{N \times n}$.
Consequently, $f \geq a$ for an affine function $a$ on $\R^{N \times n}$, and defining $g=\abs{\cdot} +(f-a)/\eps$ for $\eps >0$
we record that $g \in \SQb$ so that $(i)$ holds for $g$, and thus also for $\eps g$. By approximation
we deduce that $(i)$ also holds for $f$. The final step is facilitated by the following approximation result.

\begin{lemma}\label{approxi}
Let $f \in \Qb$ and $\delta > 0$. Then $g(\xi ) = g_{\delta}(\xi ):= \max \bigl\{ f(\xi ) , f^{\infty}(\xi ) +\delta \abs{\xi}-\frac{1}{\delta} \bigr\}$
is quasiconvex and for some $s=s(\delta) >0$ we have $g(\xi ) = f^{\infty}(\xi )+\delta \abs{\xi}-\tfrac{1}{\delta}$ for all $\abs{\xi} \geq s$. 
Furthermore, $(g_{\delta})_{\delta \in (0,1)}$ is equi--Lipschitz, and
$$
g_{\delta}(\xi ) \to f(\xi ) \quad \mbox{ and } \quad g_{\delta}^{\infty}(\xi ) \to f^{\infty}(\xi ) 
$$
pointwise in $\xi$ as $\delta \searrow 0$.
\end{lemma}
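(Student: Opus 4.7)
The plan is to verify the four claims in sequence, relying throughout on three background facts. First, every $f \in \Qb$ is Lipschitz: quasiconvexity implies rank--one convexity, so Lemma \ref{lipschitz} together with the linear growth hypothesis yields a Lipschitz constant $L=L(f)$. Second, the recession function $f^\infty$ is then itself $L$--Lipschitz and positively $1$--homogeneous, by passing to the limit in $f_t(\xi) := f(t\xi)/t$. Third, the equi--Lipschitz continuity of the family $(f_t)_{t\geq 1}$ combined with pointwise convergence to $f^\infty$ upgrades, via Arzelà--Ascoli on the compact unit sphere of $\R^{N \times n}$, to uniform convergence there.

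For quasiconvexity of $g_\delta$ I would use that the defining Jensen inequality against compactly supported test maps is closed under pointwise limits, sums, and pointwise maxima. Concretely, $f^\infty$ inherits quasiconvexity from $f$ as a pointwise limit of $f_t$; the function $\xi \mapsto f^\infty(\xi) + \delta|\xi| - 1/\delta$ is quasiconvex as a sum of a quasiconvex and a convex function; and a pointwise maximum of two quasiconvex functions is quasiconvex by a direct application of the defining inequality. Combining these three observations gives quasiconvexity of $g_\delta$.

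For the outside--a--ball claim, the uniform convergence of the $f_t$ yields a threshold $T=T(\delta)$ beyond which $f(\xi) \leq f^\infty(\xi) + (\delta/2)|\xi|$; requiring in addition $|\xi| \geq 2/\delta^2$ makes $(\delta/2)|\xi| \leq \delta|\xi| - 1/\delta$, so $f(\xi) \leq f^\infty(\xi) + \delta|\xi| - 1/\delta$ whenever $|\xi| \geq s(\delta) := \max\{T, 2/\delta^2\}$, at which point $g_\delta$ coincides with its second branch. Equi--Lipschitz continuity of $(g_\delta)_{\delta \in (0,1)}$ is then immediate: both $f$ and $\xi \mapsto f^\infty(\xi) + \delta|\xi| - 1/\delta$ are $(L+1)$--Lipschitz, and a pointwise maximum of $(L+1)$--Lipschitz functions is $(L+1)$--Lipschitz.

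The pointwise convergences are now easy. For fixed $\xi$ one has $f^\infty(\xi) + \delta|\xi| - 1/\delta \to -\infty$ as $\delta \searrow 0$, so eventually $g_\delta(\xi) = f(\xi)$. For the recession, the previous step gives $g_\delta(t\xi)/t = f^\infty(\xi) + \delta|\xi| - 1/(\delta t)$ for all $t$ large enough, hence $g_\delta^\infty(\xi) = f^\infty(\xi) + \delta|\xi| \to f^\infty(\xi)$ as $\delta \searrow 0$. The only genuinely nontrivial point in the argument is the equi--Lipschitz upgrade to uniform convergence of $f_t$ on the unit sphere; everything else reduces to routine bookkeeping with Lipschitz constants.
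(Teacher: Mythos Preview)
Your approach is essentially the paper's: both establish quasiconvexity of $g_\delta$ via quasiconvexity of $f^\infty$, and both obtain the threshold $s(\delta)$ by a compactness argument on the unit sphere. The one point to watch is that the paper defines $f^\infty$ as a $\limsup$ (see (\ref{urecess}) and (\ref{liprecess})), not a limit, so your premise that $f_t \to f^\infty$ \emph{pointwise} is not justified a priori; whether the full limit exists for general quasiconvex $f$ of linear growth is in fact a delicate question. This does not derail your argument, however. For quasiconvexity of $f^\infty$ one applies reverse Fatou to the $\limsup$ (using the uniform linear bound on the $f_t$), and for the threshold $s(\delta)$ you only need the \emph{one-sided} uniform estimate $f_t(\sigma) \leq f^\infty(\sigma) + \delta/2$ on the sphere for large $t$; this does follow from equi-Lipschitz continuity plus $\limsup_t f_t = f^\infty$ by exactly your compactness argument. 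The paper packages the same idea slightly differently, reading off local uniformity directly from the joint $\limsup$ in (\ref{urecess}) applied to $(f - f^\infty + 1/\delta)^\infty = 0$ and then covering the sphere.
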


\begin{proof}[Proof of Lemma \ref{approxi}]
It is clear that $g_{\delta}(\xi ) \to f(\xi )$ as $\delta \searrow 0$ pointwise in $\xi$, that $g_{\delta}$ are quasiconvex 
and, by Lemma \ref{lipschitz}, that $(g_{\delta})$ is equi--Lipschitz.

It remains to find the number $s=s(\delta )$ with the stated property. The rest then follows easily.
Our definition of recession function at (\ref{urecess}) applied to $(f-f^{\infty}+\tfrac{1}{\delta})^{\infty}=0$ yields for given 
$\xi \in \partial \B^{N \times n}$ and $\delta >0$ an $s=s(\xi ,\delta ) >0$ such that
\begin{equation}\label{asympto}
f(t\xi^{\prime}) < f^{\infty}(t\xi^{\prime})+\delta \abs{t\xi^{\prime}}-\frac{1}{\delta}
\end{equation}
for $t \geq s$ and $\xi^{\prime} \in \partial \B^{N \times n}$ with $\abs{\xi-\xi^{\prime}}<1/s$. By compactness of $\partial \B^{N \times n}$
we therefore find an $s = s(\delta ) >0$ such that (\ref{asympto}) holds for all $t\geq s$ and $\xi^{\prime} \in \partial \B^{N \times n}$.
Stated differently we have shown that $f(\xi ) < f^{\infty}(\xi ) + \delta\abs{\xi}-\tfrac{1}{\delta}$ for all $\xi$ with $\abs{\xi} \geq s$. 
But then $g (\xi )=f^{\infty}(\xi ) +\delta \abs{\xi}-\tfrac{1}{\delta}$ for $\abs{\xi} \geq s$, and in particular 
$g^{\infty}(\xi )=f^{\infty}(\xi ) +\delta \abs{\xi}$.
\end{proof}

Fix $f \in \Qb$. Then by the foregoing lemma and the previous step, $(i)$ holds for the integrands $g_{\delta}+\tfrac{1}{\delta}$. But then it also holds for 
each $g_{\delta}$ and so by approximation for $f$.
\end{proof}

\noindent
Department of Mathematics, University of Leipzig
\medskip

\noindent
{\em E-mail address}: Bernd.Kirchheim@math.uni-leipzig.de
\medskip

\noindent
Mathematical Institute, University of Oxford, Andrew Wiles Building, Radcliffe Observatory Quarter,
Woodstock Road, Oxford OX2 6GG, UK
\medskip

\noindent
{\em E-mail address}: kristens@maths.ox.ac.uk

\end{document}

\begin{definition} \label{defscone}
Let $\mC$ be a subset of a finite dimensional real vector space $\mV$, 
and $f\colon \mC\to \R$.
We say that $f$ has at $x \in \mC$
\begin{itemize}
\item
a {\em subdifferential}  if there is a linear $\ell\colon \mV\to \R$
such that 
$$ f(y)\geq f(x)+\ell(y-x) \quad \text{ for all } y\in \mC.$$
\item
a $\mD$--subcone, where $\mD\subset \mV$ is a balanced cone, 
if there is an $\ell\colon \mV\to \R$
which is $\mD$--convex and positively one-homogeneous and satisfies 
$$ f(y)\geq f(x)+\ell(y-x) \quad \text{ for all } y\in \mC.$$
\end{itemize}
\end{definition}
As we shall see momentarily the two conditions are in fact equivalent. However, first we establish
the existence of $\mD$--subcones:

\begin{lemma} \label{existscone}
Let $\mC$ be a convex open cone in a finite dimensional real vector space $\mV$ spanned by
a balanced cone $\mD$ of directions. 

If $f\colon\mC\to \R$ is $\mD$--convex and positively one-homogeneous, then for any 
$x\in \mC\cap \mD$, $y\in \mC$ and $\lambda\in (0,1)$ we have 
\begin{equation}\label{monot}
f(y)-f(x)\geq \frac{f(x+\lambda(y-x))-f(x)}{\lambda}.
\end{equation} 
In particular, $f$ has a $\mD$--subcone at $x$.
\end{lemma}

\begin{proof}
Since adding a linear function to $f$ does not effect the assumptions nor the validity of 
(\ref{monot}) or the existence of a $\mD$--subcone, we can suppose in the sequel that 
$f(x)=0$. By Lemma \ref{lipschitz} the function $f$ is lipschitz near $x$ and hence also $f^\infty(x)=0$. 
Therefore, using Lemma \ref{keybound} we conclude as required
\begin{align*}
f(y)-f(x) & = f(y)= f(x+(y-x))\\
&\geq f(\frac{x}{\lambda} + (y-x))=\frac{f({x} +\lambda (y-x))-f(x)}{\lambda}.
\end{align*}

To see that this implies the existence of a $\mD$--subcone at $x$ we choose an $\eps>0$ so 
$B_{2\eps}(x)\subset \mC$ and define for $s \geq 1$,
$$
g_s(y) := sf(x+\frac{y}{s}), \quad y \in B_{s\eps}(0).
$$
Clearly $g_{s}(0)=0$. By Lemma \ref{lipschitz} we have $\lip(g_s)=\lip(f,B_\eps(x))$ and by
(\ref{monot}) we get for $s \leq t$ the monotonicity property $g_{s}(y) \geq g_{t}(y)$ for all $y \in B_{s\eps}(0)$.
Hence for each $y\in \mV$ the limit
$$
g(y) := \lim_{s\to\infty} g_s(y)=\inf_{s>0} g_{s}(y)
$$
exists in $\R$, and defines a Lipschitz function $g \colon \mV \to \R$. As a pointwise limit of $\mD$--convex
functions $g$ is $\mD$--convex, and since for any $y\in \mV$ and $\lambda >0$,
$$ 
g(\lambda y)=\lim_{s\to \infty} s f\left(x+\frac{\lambda y}{s}\right)=
 \lambda  \lim_{s\to \infty} \left(\frac{s}{\lambda}\right)  f\left(x+\frac{y}{s/\lambda}\right)=
\lambda g(y),
$$
$g$ is also positively one--homogeneous. Finally, for $y \in \mC$ we get from (\ref{monot}) upon taking
any $\lambda \geq \eps/(1+\| y-x \|)$ that $\lambda (y-x) \in B_{\eps}(0)$ so
$$
f(y)-f(x)\geq \frac{1}{\lambda}f\left(x+\lambda(y-x)\right)=
\frac{1}{\lambda} g_1(\lambda(y-x)).
$$
Since $g_1 \geq g$ and $g$ is positively one--homogeneous it follows that $g$ is a $\mD$--subcone for 
$f$ at $x$.
\end{proof}

\begin{proposition}\label{simple}
Let $\mC$ be a  subset of  a finite dimensional real vector space $\mV$ spanned by
a balanced cone $\mD$ of directions. 

If the function $f\colon\mC\to \R$  has in the point $x\in \mC$ a $\mD$--subcone then
it also has a subdifferential in $x$.
\end{proposition}

\begin{proof} We will proceed by induction on the dimension $n\geq 1$ of $\mV$ and suppose
the statement is true whenever the dimension of the vector space is less than $n$.

A short reflection  on  Definition \ref{defscone} shows that existence both of a $\mD$-subcone 
and a subdifferential are unchanged under a simultaneous shift of the function $f$ and the 
contact point $x$, so we will suppose $x=0_\mV$ and, replacing $f$ by its $\mD$-subcone, 
that $f$ is $\mD$-convex and positively one-homogeneous on all of $\mV$. 

We  choose a basis $e_1,\ldots,e_n \in \mD$ of $\mV$ and define $x=e_1$ and  
$\tilde{\mV}=\text{span}\{e_2,\ldots, e_n\}$, so $\tilde{\mV}$ is spanned by $\tilde{\mD}=
\tilde{\mV}\cap \mD$.  According to Lemma \ref{existscone} there is a
$\mD$--subcone $g$  for $f$ in $x$, and clearly $g_{| \tilde{\mV}}$ is its own 
$\tilde{\mD}$-subcone in $0_{\tilde{\mV}}$. Therefore, by the induction assumption there is a 
subdifferential at the origin: a linear $\tilde{l} \colon \tilde{\mV}\to\R$ such that 
$\tilde{\ell}(y)\leq g(y)$ whenever $y\in \tilde{\mV}$, in particular $\tilde{\ell}=0$ if 
$n=1$. 

Now we claim that
$$
\ell(tx+y)=tf(x)+\tilde{\ell}(y) \quad \text{ for } y\in \tilde{\mV} \text{ and } t\in \R
$$
is a subdifferential for $f$ at $0_\mV$. Once this claim is shown, our proof is finished.

For this purpose we first note that cleary $\ell$ is linear on $\mV$. Since $f(0)=0$, we need
only to  show that $f(z)\geq \ell(z)$ for each $z=tx+y$, $t\in \R$ and $y\in\tilde{\mV}$.
But if $t=1$ then we have for all $y\in \tilde{\mV}$ that 
$$\ell(x+y)=f(x)+\ell(y)\leq f(x)+g(y)\leq f(x+y),$$ 
according to the definition of a $\mD$--subcone $g$ of $f$ in $x$. 

By positive one-homogeneity of $f$  we get now for all $t>0, y\in\tilde{\mV}$ that
$$\ell(tx+y) =t\ell(x+\frac{y}{t})\leq t f(x+\frac{y}{t})=f(tx+y).$$
Finally, if $t\leq 0$, $y \in \tilde{\mV}$ we use Lemma \ref{lipschitz}  to infer
$f^\infty(x)=f(x)$ and now Lemma \ref{keybound} gives
$$
\ell(x+y)\leq f(x+y)\leq f^\infty((1-t)x) + f(tx+y)
$$
and so
$$
f(tx+y)\geq \ell(x+y)-(1-t)f(x)=\ell(tx+y),
$$
which finishes the proof.
\end{proof}

Clearly, Theorem \ref{convex} is now a direct consequence of Proposition \ref{simple}
and  Lemma \ref{existscone}

In addition to Theorem \ref{convex} we shall prove the following result:

\begin{theorem}\label{convexCLOS}
Let $\mC$ be a closed convex cone in a normed finite dimensional real vector space $\mV$, and $\mD$ a balanced cone of
directions in $\mV$ such that $\mC \cap \mD$ spans $\mV$.

If $f \colon \mC \to \R$ is $\mD$--convex, positively one--homogeneous and Lipschitz, then $f$ is convex at each point 
of $\mC \cap \mD$. More precisely, and in view of the homogeneity, for each $x_0 \in \mC \cap \mD$ there exists a linear 
function $\ell \colon \mV \to \R$ satisfying $\ell(x_{0})=f(x_{0})$ and $f \geq \ell$ on $\mC$.
\end{theorem}
In the following lemma, which constitutes the key step in the proof of Theorems \ref{convex} and \ref{convexCLOS}, we denote for
an $e \in \mC$ by $E$ the face of $\mC$ that contains $e$. Because $\mC$ is a closed convex cone also $E$ is a closed 
convex cone. See for instance \cite{rock} pp.~162--164. Also note that if $e$ is an interior point of $\mC$, then clearly 
$\mathrm{span}E=\mV$.

\begin{lemma}\label{preli}
Assume that $f\colon \mC \to \R$ is $\mD$--convex, positively one--homogeneous and Lipschitz, and let $e \in \mC \cap \mD \setminus \{ 0 \}$.
Denote $\mC_{e} := \mC+\mathrm{span} \ E$. Then for every $x \in \mC_e$ the limit
\begin{equation}\label{preli1}
g(x) := \lim_{t \to \infty} \biggl( f(x+te)-f(e)t \biggr)
\end{equation}
exists in $\R$, and it defines a $\mD$--convex, positively one--homogeneous and Lipschitz continuous function $g\colon \mC_{e} \to \R$.
Furthermore, 
\begin{equation}\label{preli2}
f(x+te) \geq f(e)t+g(x)
\end{equation}
holds for all $x \in \mV$, $t \in \R$ with $x+te \in \mC$.
\end{lemma}

\begin{proof}
Observe that since $\mC$ is a convex cone we have when $x \in \mV$ and $s \in \R$ satisfy $x+se \in \mC$, then also
$x+te \in \mC$ for all $t \geq s$. Hence according to Lemma \ref{keybound} we have for $x \in \mV$ and $s \in \R$ with
$x+se \in \mC$ and all $t \geq s$ that
\begin{eqnarray*}
f(x+te) &=& f(x+se+(t-s)e)\\
&\leq& f(x+se)+f^{\infty}((t-s)e).
\end{eqnarray*}
Here $f^{\infty}=f$ by Lipschitz continuity and positive one--homogeneity, so we may rewrite it as
$$
f(x+te)-f(e)t \leq f(x+se)-f(e)s.
$$
It follows that the function $t \mapsto f(x+te)-f(e)t$ is decreasing on $[s,\infty )$, and since it is also bounded below
($f$ is Lipschitz) the limit (\ref{preli1}) exists and
$$
g(x) = \inf_{t: \, x+te \in \mC} \biggl\{ f(x+te)-f(e)t \biggr\} .
$$
The argument shows that the limit (\ref{preli1}) exists for all those $x \in \mV$ for which there exists $t \in \R$ so $x+te \in \mC$,
and it is clear (\ref{preli2}) will hold. 
In order to see that we have the convergence (\ref{preli1}) exactly on the convex cone $\mC_e = \mC + \spn E$ we start with the straight 
forward observation
\begin{equation}\label{preli3}
\mC \subseteq \{ x \in \mV : \, x+te \in \mC \mbox{ for some } t \in \R \} \subseteq \mC_e .
\end{equation}
Now if the face $E$ is contained in $\spn \{ e \}$, then clearly the limit 
exists for all $x \in \mC_{e}=\mC + \spn E$ as asserted. Otherwise the face $E$ is not contained in $\spn \{ e \}$ and then 
$e$ must be in the relative interior of $E$ and so the relative ball $B^{E}_{r}(e) := B_{r}(e) \cap \spn E$ 
is contained in $E$ for some small $r>0$. We can then for any $x \in \spn E$ find $t>0$ so 
$x+te \in B^{E}_{tr}(te) \subset tE = E$, and therefore the limit (\ref{preli1}) exists for such $x$ too. It is
now clear that the limit (\ref{preli1}) in fact exists for all $x \in \mC_e$.

Next we note that $g$ is Lipschitz and $\mD$--convex on $\mC_e$ since it is a pointwise limit of equi--Lipschitz and
$\mD$--convex functions defined on ascending convex sets. 


Finally, the positive one--homogeneity follows easily since for $x \in \mC_e$ and $\lambda >0$ we have
\begin{eqnarray*}
g(\lambda x) &=& \lim_{t \to \infty} \biggl( f(te + \lambda x)-f(e)t \biggr)\\
&=& \lambda \lim_{t \to \infty} \frac{f(e+\frac{\lambda}{t}x)-f(e)}{\frac{\lambda}{t}}\\
&=& \lambda g(x).
\end{eqnarray*}
\end{proof}

\begin{proof}[Proof of Theorem \ref{convexCLOS}]
The proof proceeds by induction on the dimension $n=\mathrm{dim} \ \mV$.
The statement is clearly true for $n=1$ since in that case
$\mD = \mV$, so let $n>1$ and assume that the statement is true whenever the dimension of the vector space is less then $n$. 
If $x_0 \neq 0$ we put $e=x_0$, and if $x_0 =0$ we select any $e \in \mC \cap \mD \setminus \{ 0 \}$. By assumption we can extend
$e$ to a basis $(e_j )_{j=1}^{n}$ for $\mV$, where $e_1 = e$ and $e_j \in \mC \cap \mD$ for $j \in \{ 1, \ldots , n \}$.
 
By virtue of Lemma \ref{preli} we can find a $\mD$--convex, one--homogeneous and Lipschitz continuous function $g \colon \mC_{e_1} \to \R$ 
such that
$$
f(x+te_1) \geq f(e_{1})t+g(x)
$$
for all $x \in \mV$ and $t \in \R$ for which $x+te_{1} \in \mC$. Clearly, $g$ extends to a $\mD$--convex, positively one--homogeneous
and Lipschitz continuous function on the closure $\overline{\mC}_{e_1}$. 

Put $\tilde{\mV} := \spn \{ e_{2}, \ldots , e_{n} \}$, $\tilde{\mC} := \mC \cap \tilde{\mV}$ and $\tilde{\mD}:=\mD \cap \tilde{\mV}$ 
and apply the induction hypothesis to the restriction $g|_{\tilde{\mC}}$ at $0 \in \tilde{\mV}$: there exists a linear function 
$\tilde{\ell}\colon \tilde{\mV} \to \R$ such that $g(x) \geq \tilde{\ell}(x)$ for all $x \in \tilde{\mC}$. Define $\ell \colon \mV \to \R$
as
$$
\ell \biggl(\sum_{j=1}^{n}t_{j}e_{j} \biggr) := f(e_{1})t_{1}+\tilde{\ell}\biggl(\sum_{j=2}^{n}t_{j}e_{j} \biggr) \quad (t_j \in \R )
$$
and note that hereby $\ell (x_{0})=f(x_{0})$ and $f \geq \ell$ on $\mC$.
\end{proof}

\begin{proof}[Proof of Theorem \ref{convex}]
Here we assume that $\mC$ is an open convex cone in $\mV$ and that $x_{0} \in \mC \cap \mD$. Choose a closed convex cone $\tilde{\mC}$
contained in $\mC \cup \{ 0 \}$ and such that $x_{0}$ is an interior point of $\tilde{\mC}$. Define $f(0):=0$ and consider the
restriction of $f$ to $\tilde{\mC}$. It follows from Lemma Lipschitz and the assumptions of linear growth and $\mD$--convexity that
this restriction is {\em globally} Lipschitz on $\tilde{\mC}$, so that we can apply Lemma \ref{preli} with $e=x_{0}$. Since $x_0$ is
an interior point we have in the notation of Lemma \ref{preli} that $\mathrm{span} E=\mV$, and hence accordingly we find a $\mD$--convex, positively
one--homogeneous and Lipschitz function $g \colon \mV \to \R$ such that
\begin{equation}\label{step}
f(x+tx_{0}) \geq f(x_{0})t+g(x)
\end{equation}
for all $x \in \mV$, $t \in \R$ with $x+tx_{0} \in \tilde{\mC}$. Now in the above construction of $g$ the cone $\tilde{\mC}$ only entered
in a proforma manner, and $g$ does not actually depend on the particular choice of $\tilde{\mC}$ within the given constraints. Consequently
the inequality (\ref{step}) in fact extends to {\em all} $x \in \mV$, $t \in \R$ with $x+tx_{0} \in \mC$. We can now appeal to Thereom
\ref{convexCLOS} to finish the proof.
\end{proof}